\newcommand{\nc}{\newcommand}
\newcommand{\delete}[1]{}
\newcommand{\mbibitem}[1]{\bibitem{#1}} 
\newcommand{\mbibitem}[1]{\bibitem[\bf #1]{#1}} 
\newtheorem{thm}{Theorem}[section]
\newtheorem{prop}[thm]{Proposition}
\newtheorem{lem}[thm]{Lemma}
\newtheorem{cor}[thm]{Corollary}
\theoremstyle{definition}
\newtheorem{defn}[thm]{Definition}
\newtheorem{rem}[thm]{Remark}
\newtheorem{exam}[thm]{Example}
\newtheorem{prop-def}{Proposition-Definition}[section]
\nc{\tred}[1]{\textcolor{red}{#1}} \nc{\tgreen}[1]{\textcolor{green}{#1}}
\nc{\tblue}[1]{\textcolor{blue}{#1}} \nc{\tpurple}[1]{\textcolor{purple}{#1}}
\nc{\yuan}[1]{\tred{\underline{Yuan:}#1 }}
\nc{\wen}[1]{\tblue{\underline{Wen:}#1 }}
\newcommand{\cee}{(E,\succ_{E},\prec_{E})}
\newcommand{\dnv}{D\,\natural V}
\newcommand{\dnvp}{D\,\natural^{\prime} V}
\newcommand{\ceep}{(E,\succ_{E}^{\prime},\prec_{E}^{\prime})}
\begin{document}

\title[Extending Structures for Dendriform Algebras]{Extending Structures for Dendriform Algebras}
\author{Yuanyuan Zhang}
\address{School of Mathematics and Statistics,  Henan University,  Henan,  Kaifeng 475004,  P.\, R. China}
\email{zhangyy17@henu.edu.cn}
\author{Junwen Wang}
\address{School of Mathematics and Statistics,  Henan University,  Henan,  Kaifeng 475004,  P.\, R. China}
\email{3111435107@qq.com}

\date{\today}

\begin{abstract}
In this paper, we devote to extending structures for dendriform algebras. First, we define extending datums and unified products of dendriform algebras, and theoretically solve the extending structure problem. As an application, we consider flag datums as a special case of extending structures, and give an example of the extending structure problem. Second, we introduce matched pairs and bicrossed products of dendriform algebras and theoretically solve the factorization problem for dendriform algebras. Moreover, we also introduce cocycle semidirect products and nonabelian semidirect products as special cases of unified products. Finally, we define the deformation map on a dendriform extending structure (more general case), not necessary a matched pair, which is more practical in the classifying complements problem.
\end{abstract}

\makeatletter
\@namedef{subjclassname@2020}{\textup{2020} Mathematics Subject Classification}
\makeatother
\subjclass[2020]{
16W99,  
}

\keywords{dendriform algebras, extending structure, unified product, matched pair, complement}

\maketitle

\tableofcontents

\setcounter{section}{0}

\allowdisplaybreaks

\section{Introduction}
A dendriform algebra is a vector space $D$ with two binary operations $\prec$ and $\succ$, satisfying three relations (see Eqs.~\eqref{definition:dendriform:formula1}-\eqref{definition:dendriform:formula3}). The concept was introduced by Loday~\cite{L01} in 1995 with the motivation from periodicity of algebraic K-theory and operads. There is a close relationship between Rota-Baxter algebras and dendriform algebras. In ~\cite{Agu00, EG02,EG08}, the authors proved that the Rota-Baxter algebra can induce dendriform algebras. Moreover, dendriform algebras have been studied quite extensively with connections to several areas in mathematics and physics, such as arithmetics~\cite{L02}, homology~\cite{F98}, preLie algebras~\cite{Agu002,AB07,M12}, Hopf algebras~\cite{L98} and combinatorics~\cite{EMP08}.
\smallskip

The extending structure problem has been studied extensively, it is first studied by Agore and Militaru in group theory ~\cite{AM14}. Later, they also studied Lie algebras~\cite{AM14L}, Leibniz algebras~\cite{AM13}, Hopf algbras\cite{AM13B}, poisson algebras~\cite{AM15} and associative algebras~\cite{AM16}.
In recent years, many researchers studied the problem for various algebraic structures. Zhang studied extending structures of 3-Lie algebras~\cite{Z22b} and unified products for braided Lie bialgebras~\cite{Z22a}, Hong studied extending structures of left-symmetric algebras~\cite{H19b}, associative conformal algebras~\cite{H19a}, Lie conformal algebras~\cite{H17} and Lie bialgebras~\cite{H21}, Zhao et.al studied extending structures of Lie conformal superalgebras~\cite{ZCY}, Peng et.al studied extending structures of Rota-Baxter Lie algebras~\cite{PZ}, Hou studied extending structures of perm algebras and perm bialgebras~\cite{HB23}, and so on. The scholars who care about this problem often define extending structures and unified products of different algebraic structures. In this paper, we will define dendriform extending structures and unified products as a useful tool to solve the problem. Moreover, for different algebra extending structures (resp. matched pairs), we have the following diagram:
\begin{align*}
\CD
  \makecell[c]{\text{dendriform}\\ \text{extending structure}} @>\text{by Eq.~\eqref{formulas:dendriform to associative}}>> \makecell[c]{\text{associative}\\ \text{extending structure~\cite{AM16}}} \\
  @V\text{by Eq.~\eqref{formulas:dendriform to prelie}}VV @V\text{by Eq.~\eqref{formulas:asso to lie}}VV  \\
  \makecell[c]{\text{preLie}\\ \text{extending structure~\cite{H19b}}} @>\text{by Eq.~\eqref{formulas:preLie to lie}}>> \makecell[c]{\text{Lie}\\ \text{extending structure~\cite{AM14L}}}
\endCD
\end{align*}

\begin{defn}
If $D$ is a subalgebra of dendriform algebra $E$, then $E$ is also called an {\bf extension} of $D$, and is denoted by $D \subset E$. In this case, we also say $D\subset E$ is an extension of dendriform algebras. A subspace $V$ of $E$ is called a {\bf space complement} of $D$ in $E$ if $E=D+V$, and $D\cap V={0}$.
\end{defn}

Our first aim of this paper is to study the extending structure problem for dendriform algebras.
{\bf The Extending Structure (ES) Problem}: Let $D$ be a dendriform algebra, $E$ a vector space containing $D$ as a subspace. Describe and classify all extensions $\cee$ of $D$.
The ES problem can be rephrased with the language of ``subalgebra": Let $D$ be a dendriform algebra, $E$ a vector space containing $D$ as a subspace. Describe and classify all dendriform algebra structures that can be defined on $E$ containing $D$ as a subalgebra.
\smallskip

Our second aim of this paper is to study the factorization problem for dendriform algebras, which is the ES problem with an additional assumption ``the space complement $V$ of $D$ in $E$ to be also a subalgebra of $E$".

{\bf The Factorization Problem}: Let $D$ and $V$ be two dendriform algebras. Describe and classify all dendriform algebra structures that can be defined on $E$ such that $E$ {\bf factorizes through} $D$ and $V$, i.e., $E$ contains $D$ and $V$ as subalgebras such that $E=D+V$ and $D\cap V=\{0\}$.

\smallskip
Our third aim of this paper is to study various products by unified products as a powerful tool. Various products are indeed special cases of unified products, which is the origin of the name ``unified product",  refer to the following diagram.

\begin{align*}
\CD
  \makecell[c]{\text{unified}\\ \text{product}} @>\rightharpoonup_{1},\rightharpoonup_{2},\leftharpoonup_{1},\leftharpoonup_{2}>\text{trivial}> \makecell[c]{\text{cocycle}\\ \text{semidirect product}} \\
  @V f_{1},f_{2} V \text{trivial} V @V f_{1},f_{2} V \text{trivial} V  \\
  \makecell[c]{\text{bicrossed}\\ \text{product}} @>\rightharpoonup_{1},\rightharpoonup_{2},\leftharpoonup_{1},\leftharpoonup_{2}>\text{trivial}> \makecell[c]{\text{nonabelian}\\ \text{semidirect product}} @>\triangleright_{1},\triangleright_{2},\triangleleft_{1},\triangleleft_{2}>\text{trivial}> \makecell[c]{\text{direct}\\ \text{sum}} \\
  && @V \succ_{V},\prec_{V} V \text{trivial} V   \\
  &&\makecell[c]{\text{abelian}\\ \text{semidirect product}}
\endCD
\end{align*}

\begin{defn}
  Let $D \subset E$ be an extension of dendriform algebras. A subalgebra $V$ of $E$ is called a {\bf dendriform complement} of $D$ in $E$ (or a  {\bf $D$ dendriform complement} of $E$) if $E=D+V$ and $D\cap V=\{0\}$. We know that $V$ is a dendriform complement of $D$ in $E$ if and only if $E$ factorizes through $D$ and $V$.
\end{defn}

Our fourth aim of this paper is to study the classifying complements problem for dendriform algebras.
Generally speaking, the classifying complements problem is an inverse problem of the factorization problem, the factorization problem is to solve the problem $D+V=?$, while the classifying complements problem is to solve the problem $D+?=E$.

{\bf The Classifying Complements Problem(CCP)}: Let $D \subset E$ be an extension of dendriform algebras.
If a dendriform complement of $D$ in $E$ exists, describe and classify all dendriform complements of $D$ in $E$, and compute the cardinal of the isomorphism classes of all $D$ dendriform complements of $E$, which is called the {\bf index} of $D$ in $E$ and will be denoted by $[E:D]$.

The CCP problem has been studied in many algebraic structures, such as groups~\cite{AM15GC}, Lie algebras~\cite{AM14LC}, associative algebras~\cite{A14}, Hopf algbras\cite{AM13HC}, left-symmetric algebras~\cite{H19b}, perm algebras~\cite{HB23}, and so on.
 The scholars who care about this problem often define deformation maps on the given algebraic complement (i.e., a matched pair) of different algebraic structures. In this paper, we define deformation maps on a dendriform extending structure (more general case), not necessary a matched pair. In this case, the condition ``if a dendriform complement of $D$ in $E$ exists" is no longer required.

\smallskip
The paper is organized as follows. In Section 2, we recall some basic concepts and results. In Section 3, we define extending datums and unified products, and theoretically solve the extending structure problem. Moreover, we prove that the flag datum is a special case of the dendriform extending structure, and give an example of classification for the extending structure problem. In Section 4, we introduce matched pairs and bicrossed products of dendriform algebras to solve the factorization problem, and we also introduce cocycle semidirect products and nonabelian semidirect products of dendriform algebras as special cases of unified products. Then we consider the splitting of short exact sequences. In Section 5, we introduce the deformation map to solve the classifying complements problem for dendriform algebras. In Section 6, we give two problems that may be worthy to study in the future.

\smallskip
{\bf Notation.}
Throughout this paper,  we fix ${\bf k}$ the complex number field,
which will be the base field of all vectors, algebras, as well as linear maps.

\section{Preliminary}

In this section, we mainly recall some basic concepts of dendriform algebras, which will be used later. Meanwhile, we give some examples.
\subsection{Dendriform algebras}
\begin{defn}~\cite{Guo12,L01}\label{defn:dendriform algebra}
A {\bf dendriform algebra} is a vector space $D$, together with bilinear operations $\succ,\prec:D\times D\rightarrow D$, such that the following conditions hold for all $x,y,z\in D$
\begin{align}
(x\succ y+x\prec y)\succ z=&x\succ (y\succ z);\label{definition:dendriform:formula1}\\
(x\prec y)\prec z=&x\prec (y\succ z+y\prec z);\label{definition:dendriform:formula2}\\
(x\succ y)\prec z=&x\succ (y\prec z).\label{definition:dendriform:formula3}
\end{align}
We denote it by $(D,\succ,\prec)$, or simply $D$ if there is no confusion.
\end{defn}

\begin{rem}\label{rem:asso}
Let $D$ be a dendriform algebra.
\begin{enumerate}
   \item \label{asso} Define a new operation $\star:=\succ+\prec$, then $(D,\star)$ is an associative algebra~\cite{Guo12}.
   \item \label{preLie} Define a new operation $\diamond:=\succ-\prec$, then $(D,\diamond)$ is a left preLie algebra~\cite{Agu002}.
   \end{enumerate}
   \end{rem}

\begin{exam}\label{exam:dendriform algebra}
\begin{enumerate}
\item\label{exam:dendriform algebra1} Suppose that $D={\bf k}\{e_{1}\}$, $e_{1}\succ e_{1}=e_{1}$, $e_{1}\prec e_{1}=0$, then $(D,\succ,\prec)$ is a dendriform algebra.
\item\label{exam:dendriform algebra2} Suppose that $B={\bf k}\{e_{2}\}$, $e_{2}\succ e_{2}=0$, $e_{2}\prec e_{2}=e_{2}$, then $(B,\succ,\prec)$ is a dendriform algebra.
\item Suppose that $E={\bf k}\{e_{1},e_{2}\}$ satisfying the following conditions
\begin{align*}
\begin{split}
e_{1}\succ e_{1}=&e_{1};\\
e_{1}\prec e_{1}=&0;\\
\end{split}
\begin{split}e_{1}\succ e_{2}&=e_{2};\\
e_{1}\prec e_{2}&=2e_{1}-e_{2};\\
\end{split}\,\,\,\,\,\,\,\,\,
\begin{split}e_{2}\succ e_{1}&=2e_{1};\\
e_{2}\prec e_{1}&=0;\\
\end{split}
\begin{split}e_{2}\succ e_{2}&=2e_{2};\\
e_{2}\prec e_{2}&=4e_{1}-2e_{2}.
\end{split}
\end{align*}
Then $(E,\succ,\prec)$ is a dendriform algebra.
\end{enumerate}
\end{exam}
\begin{defn}~\cite{Guo12,L01}
 Let $D$ and $B$ be two dendriform algebras. A linear map $\varphi:D\rightarrow B$ is called a {\bf morphism} of dendriform algebras if the following identities hold for all $x,y\in D$,
    \begin{align*}
   \varphi(x\succ y)&=\varphi(x)\succ_{B}\varphi(y);\\
  \varphi(x\prec y)&=\varphi(x)\prec_{B}\varphi(y).
    \end{align*}
\end{defn}
A morphism $\varphi$ is called an {\bf isomorphism} if it is also a bijective map. We call $D$ and $B$ {\bf isomorphic} if there exists an isomorphism of dendriform algebras $\varphi:D$ $\rightarrow B$ , we denote it by $D\cong B$.


\subsection{The module of dendriform algebras}
\begin{defn}~\cite{D21} Let $D$ be a dendriform algebra, and $V$ a vector space. Suppose that $\triangleright_{1},\triangleright_{2}:D\times V\rightarrow V$ and $\triangleleft_{1},\triangleleft_{2}:V\times D\rightarrow V$ are four bilinear maps. We define two new bilinear maps $\triangleright:=\triangleright_{1}+\triangleright_{2}$ and $\triangleleft:=\triangleleft_{1}+\triangleleft_{2}$.
\allowdisplaybreaks{
\begin{enumerate}
  \item The triple $(V,\triangleright_{1},\triangleright_{2})$ is called a {\bf left $D$ module} if the following identities hold for all $a,b\in D$ and $x\in V$,
    \begin{align}
    \label{formulas:left module definition}
    \begin{split}
   &(a\star b)\triangleright_{1} x=a\triangleright_{1}(b\triangleright_{1} x);\\
                &(a\prec b)\triangleright_{2} x=a\triangleright_{2}(b\triangleright x);\\
                &(a\succ b)\triangleright_{2} x=a\triangleright_{1}(b\triangleright_{2} x).
  \end{split}
    \end{align}
  \item The triple $(V,\triangleleft_{1},\triangleleft_{2})$ is called a {\bf right $D$ module} if the following identities hold for all $a,b\in D$ and $x\in V$,
    \begin{align}\label{formulas:right module definition}
      \begin{split}
      (x\triangleleft a)\triangleleft_{1} b=&x\triangleleft_{1}(a\succ b);\\
      (x\triangleleft_{2} a)\triangleleft_{2} b=&x\triangleleft_{2}(a\star b);\\
      (x\triangleleft_{1} a)\triangleleft_{2} b=&x\triangleleft_{1}(a\prec b).
  \end{split}
    \end{align}
  \item The 5-tuple $(V,\triangleright_{1},\triangleright_{2},\triangleleft_{1},\triangleleft_{2})$ is called a {\bf $D$ bimodule} if the triple $(V,\triangleright_{1},\triangleright_{2})$ is a left $D$ module, $(V,\triangleleft_{1},\triangleleft_{2})$ is a right $D$ module, and the following identities hold for all $a,b\in D$ and $x\in V$,
    \begin{align}\label{formulas:bimodule definition}
      \begin{split}
         (a\triangleright x)\triangleleft_{1} b=&a\triangleright_{1}(x\triangleleft_{1} b);\\
         (a\triangleright_{2} x)\triangleleft_{2} b=&a\triangleright_{2}(x\triangleleft b);\\
         (a\triangleright_{1} x)\triangleleft_{2} b=&a\triangleright_{1}(x\triangleleft_{2} b).
    \end{split}
    \end{align}
  \item Let $(V,\triangleright_{1},\triangleright_{2})$ and $(V^{\prime},\triangleright_{1}^{\prime},\triangleright_{2}^{\prime})$ be
two left $D$ modules. A linear map $\varphi:V\rightarrow V^{\prime}$ is called a {\bf left $D$ module morphism} if the following identities hold for all $a\in D$ and $x\in V$,
    \begin{align*}
  \varphi(a\triangleright_{1} x)&=a\triangleright^{\prime}_{1}\varphi(x);\\
  \varphi(a\triangleright_{2} x)&=a\triangleright^{\prime}_{2}\varphi(x).
    \end{align*}
    The right $D$ module morphism can be defined similarly. The morphism of $D$ bimodules is a linear map which is both a left $D$ module morphism and a right $D$ module morphism.
   \end{enumerate}
   }
\end{defn}

\begin{rem} Let $D$ be a dendriform algebra.
 \begin{enumerate}
  \item The triple $(D,\succ,\prec)$ is a left (right) $D$ module, and we call it the {\bf left (right) regular module}.
  \item A left $D$ module $(V,\triangleright_{1},\triangleright_{2})$ is called {\bf trivial}, if the action maps $\triangleright_{1}$ and $\triangleright_{2}$ are trivial. A linear (or bilinear) map is called trivial if it is 0.
  \item Let $(V,\triangleright_{1},\triangleright_{2},\triangleleft_{1},\triangleleft_{2})$ be a $D$ {\bf bimodule}. By Remark~\ref{rem:asso}~\ref{asso}, define two new operations $\triangleright:=\triangleright_{1}+\triangleright_{2}$ and $\triangleleft:=\triangleleft_{1}+\triangleleft_{2}$, then $(V,\triangleright,\triangleleft)$ is a $(D,\star)$ {\bf bimodule}.
   \end{enumerate}
\end{rem}

\begin{exam} \label{exam:bimodule}
To continue Example~\ref{exam:dendriform algebra}~\ref{exam:dendriform algebra1} and suppose $V={\bf k}\{e_{2}\}$. Now we give some $D$ bimodules by defining the actions as follows:
\allowdisplaybreaks{
\begin{align*}
\begin{aligned}
e_{1}\triangleright_{1}e_{2}&=\bar{l}_{1}e_{2};\\
e_{2}\triangleleft_{1}e_{1}&=\bar{r}_{1}e_{2};\\
\end{aligned}\,\,\,\,\,\,\,
\begin{aligned}
e_{1}\triangleright_{2}e_{2}&=\bar{l}_{2}e_{2};\\
e_{2}\triangleleft_{2}e_{1}&=\bar{r}_{2}e_{2}.
\end{aligned}
\end{align*}
}By direct computation, we obtain all possible solutions of $\bar{l}_1,\bar{l}_2, \bar{r}_1,\bar{r}_2$ in Table~\ref{table:bimodules} as follows. In fact, each column defines a $D$ bimodule $(V,\triangleright_{1},\triangleright_{2},\triangleleft_{1},\triangleleft_{2})$.

\begin{table}[H]
  \centering
  \renewcommand\arraystretch{1.6}
\begin{tabular}{|c|c|c|c|c|c|c|c|c|}
  \hline
$\bar{l}_{1}$ & $1$ & $0$ & $1$ & $1$ & $1$ & $1$ & $1$ & $0$\\
  \hline
$\bar{l}_{2}$ & $-1$ & $0$ & $0$ & $0$ & $-1$ & $0$ & $0$ & $0$\\
  \hline
$\bar{r}_{1}$ & $0$ & $1$ & $0$ & $0$ & $0$ & $1$ & $0$ & $0$\\
  \hline
$\bar{r}_{2}$ & $0$ & $0$ & $0$ & $1$ & $1$ & $0$ & $1$ & $0$\\
  \hline
\end{tabular}
  \caption{$D$ bimodules}\label{table:bimodules}
\end{table}

\end{exam}

\subsection{Abelian semidirect product}
\begin{defn}\label{defn:direct sum algebra}
Let $D$ and $B$ be two dendriform algebras. A {\bf direct sum} of $D$ and $B$, denoted by $D\oplus B$, is the vector space $D\times B$, together with two bilinear maps defined as follows with $a,b\in D$ and $x,y\in B$,
\begin{align*}
(a,x)\succeq(b,y)&:=(a\succ b,x\succ_{B} y);\\
(a,x)\preceq(b,y)&:=(a\prec b,x\prec_{B} y).
\end{align*}
\end{defn}

\begin{exam}\label{exam:direc sum}
Suppose that $D$ and $B$ are dendriform algebras defined in Example ~\ref{exam:dendriform algebra}~\ref{exam:dendriform algebra1} and ~\ref{exam:dendriform algebra2}. The direct sum $D\oplus B$ is defined as follows:
\begin{align*}
\begin{aligned}
(e_{1},0)\succeq (e_{1},0)&=(e_{1},0);\\
(e_{1},0)\succeq (0,e_{2})&=(0,0);\\
(0,e_{2})\succeq (e_{1},0)&=(0,0);\\
(0,e_{2})\succeq (0,e_{2})&=(0,0);\\
\end{aligned}\,\,\,\,\,\,\,
\begin{aligned}
(e_{1},0)\preceq (e_{1},0)&=(0,0);\\
(e_{1},0)\preceq (0,e_{2})&=(0,0);\\
(0,e_{2})\preceq (e_{1},0)&=(0,0);\\
(0,e_{2})\preceq (0,e_{2})&=(0,e_{2}).
\end{aligned}
\end{align*}
\end{exam}

\begin{prop}\label{prop:direct sum}
The direct sum $D\oplus B:=(D\times B,\succeq,\preceq)$ is a dendriform algebra.
\end{prop}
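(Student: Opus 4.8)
The plan is to verify directly that the three dendriform axioms \eqref{definition:dendriform:formula1}--\eqref{definition:dendriform:formula3} hold for $(D\times B,\succeq,\preceq)$ by reducing each of them, componentwise, to the corresponding axiom in $D$ and in $B$. First I would record the auxiliary operation $\blacktriangleright := \succeq + \preceq$ on $D\times B$ and observe that, by the definitions in Definition~\ref{defn:direct sum algebra}, one has $(a,x)\blacktriangleright(b,y) = (a\succ b + a\prec b,\ x\succ_B y + x\prec_B y) = (a\star b,\ x\star_B y)$; this is the only real bookkeeping needed, since the two operations $\succeq,\preceq$ act independently on the two coordinates.

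The core of the argument is then a routine componentwise check. For axiom~\eqref{definition:dendriform:formula1}, I would take arbitrary $(a,x),(b,y),(c,z)\in D\times B$ and compute both sides:
\begin{align*}
\bigl((a,x)\succeq(b,y)+(a,x)\preceq(b,y)\bigr)\succeq(c,z)
&=(a\star b,\ x\star_B y)\succeq(c,z)\\
&=\bigl((a\star b)\succ c,\ (x\star_B y)\succ_B z\bigr),\\
(a,x)\succeq\bigl((b,y)\succeq(c,z)\bigr)
&=\bigl(a\succ(b\succ c),\ x\succ_B(y\succ_B z)\bigr),
\end{align*}
and these agree because \eqref{definition:dendriform:formula1} holds in $D$ (first coordinate) and in $B$ (second coordinate). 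Axioms~\eqref{definition:dendriform:formula2} and~\eqref{definition:dendriform:formula3} are handled identically: expanding each side via the definitions collapses the verification to the equality of the corresponding expressions in $D$ and in $B$, which hold by assumption. I would present \eqref{definition:dendriform:formula1} in full and then remark that \eqref{definition:dendriform:formula2} and \eqref{definition:dendriform:formula3} follow by the same componentwise reasoning.

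There is essentially no obstacle here: the construction is the naive product of two dendriform algebras, so the only thing to get right is the consistent use of $\star=\succ+\prec$ when an axiom mixes $\succeq$ and $\preceq$ (as in \eqref{definition:dendriform:formula1} and \eqref{definition:dendriform:formula2}). If anything is mildly delicate it is merely making sure the bilinearity of $\succeq$ and $\preceq$ is invoked when splitting $(a,x)\succeq(b,y)+(a,x)\preceq(b,y)$, but this is immediate from the coordinatewise definitions. Hence the proof is a short direct computation, and I would keep it to the displayed check of \eqref{definition:dendriform:formula1} plus a one-line remark covering the other two.
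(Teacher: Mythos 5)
Your proposal is correct and matches the paper, which simply states the result follows by direct computation; your componentwise verification of the three dendriform axioms, using that $\succeq+\preceq$ acts as $(\star,\star_B)$ coordinatewise, is exactly that computation spelled out. No gaps.
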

\begin{proof} It is a direct computation.
\end{proof}
Das introduced semidirect products of dendriform algebras in \cite{D21}. In this paper, we call it abelian semidirect product, the word ``abelian" is to emphasize $V$ has a trivial dendriform algebra structure $(V,\succ_{V},\prec_{V})$, i.e., the maps $\succ_{V}$ and $\prec_{V}$ are trivial. In the sequel, the word ``nonabelian" means that $V$ has a dendriform algebra structure $(V,\succ_{V},\prec_{V})$ which may be not trivial.
\begin{defn}\cite{D21}\label{defn:abelian semidirect product}
Let $D$ be a dendriform algebra and $(V,\triangleright_{1},\triangleright_{2},\triangleleft_{1},\triangleleft_{2})$ a $D$ bimodule. An {\bf (abelian) semidirect product} of $D$ and $(V,\triangleright_{1},\triangleright_{2},\triangleleft_{1},\triangleleft_{2})$, denoted by $D\ltimes V$, is the vector space $D\times B$, together with two bilinear maps defined as follows with $a,b\in D$ and $x,y\in B$,
\begin{align*}
(a,x)\succeq(b,y)&:=(a\succ b,a\triangleright_{1} y+x\triangleleft_{1} b);\\
(a,x)\preceq(b,y)&:=(a\prec b,a\triangleright_{2} y+x\triangleleft_{2} b).
\end{align*}
\end{defn}
By direct computing, we have the following proposition.
\begin{prop}\cite{D21}\label{prop:weak semidirect product bimodule}
The abelian semidirect product $D\ltimes V:=(D\times V,\succeq,\preceq)$ is a dendriform algebra.
\end{prop}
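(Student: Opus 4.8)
The plan is to verify directly that the two operations $\succeq$ and $\preceq$ on $D\times V$ satisfy the three dendriform identities \eqref{definition:dendriform:formula1}--\eqref{definition:dendriform:formula3}. Since both operations are defined componentwise, the $D$-component of every identity reduces to the corresponding dendriform identity in $D$, which holds by hypothesis; so the real content lies in checking the $V$-component. I would introduce the shorthand $\succeq + \preceq$ and note that, on $V$, its action unpacks via $\triangleright = \triangleright_1 + \triangleright_2$ and $\triangleleft = \triangleleft_1 + \triangleleft_2$. Then for arbitrary $(a,x),(b,y),(c,z)\in D\times V$ I would expand both sides of each of the three axioms and collect the $V$-component.

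Concretely, for the first axiom $\bigl((a,x)\succeq(b,y)+(a,x)\preceq(b,y)\bigr)\succeq(c,z)=(a,x)\succeq\bigl((b,y)\succeq(c,z)\bigr)$, the $V$-component of the left side is $(a\star b)\triangleright_1 z + (a\triangleright_1 y + a\triangleright_2 y + x\triangleleft_1 b + x\triangleleft_2 b)\triangleleft_1 c = (a\star b)\triangleright_1 z + (a\triangleright y)\triangleleft_1 c + (x\triangleleft b)\triangleleft_1 c$, while the right side gives $a\triangleright_1(b\succ c) + a\triangleright_1(b\triangleright_1 z + y\triangleleft_1 c)$. Matching these term by term uses exactly the first left-module relation in \eqref{formulas:left module definition}, the first right-module relation in \eqref{formulas:right module definition}, and the first bimodule relation in \eqref{formulas:bimodule definition} (together with $x\triangleleft_1(b\succ c)$ from the right-module axiom). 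The second and third dendriform axioms are handled the same way, each time invoking the second (resp.\ third) relation from each of the three displayed families \eqref{formulas:left module definition}, \eqref{formulas:right module definition}, \eqref{formulas:bimodule definition}. This is precisely why the module axioms were packaged into triples of relations matching the three dendriform axioms, so the bookkeeping closes up cleanly.

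I do not expect any genuine obstacle here: the proof is a finite, mechanical expansion, and the only mild care needed is to keep the three left-module, three right-module, and three bimodule relations paired correctly with the three dendriform axioms, and to remember to substitute $\star = \succ + \prec$ wherever $a\star b$ or $b\star c$ appears. Accordingly I would simply state that the verification is a direct (if lengthy) computation using \eqref{formulas:left module definition}, \eqref{formulas:right module definition} and \eqref{formulas:bimodule definition}, and record the $V$-component of one axiom in detail as a representative case, leaving the other two to the reader. This matches the terse style of Proposition~\ref{prop:direct sum} and is consistent with the cited source \cite{D21}.
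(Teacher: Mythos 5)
Your proposal is correct and takes essentially the same route as the paper, which records the result as a direct computation (citing \cite{D21}) and later observes it is the special case of Theorem~\ref{thm:datum and unified product} with all maps except $\triangleright_{1},\triangleright_{2},\triangleleft_{1},\triangleleft_{2}$ trivial --- i.e., exactly the componentwise verification you outline, with the three left-module, right-module and bimodule relations pairing off with the three dendriform axioms. One small slip: in your displayed right-hand side of the first axiom the term $a\triangleright_{1}(b\succ c)$ is ill-typed and should read $x\triangleleft_{1}(b\succ c)$, which is what your subsequent term-by-term matching actually uses.
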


\section{Extending structures for dendriform algebras}

In this section, we define extending datums and unified products of dendriform algebras, and theoretically classify the dendriform extending structures. Moreover, we define flag datums and give an example of the ES problem.

\subsection{Classification criteria for the ES problem}
In this subsection, we define two relations $\equiv$ and $\approx$ on the set of extensions $\mathbf{Exts}(E,D)$, for the classification of the ES problem.


\begin{defn}\label{defn:equivalent cohomolous}
Let $D$ be a dendriform algebra, and $E$ a vector space containing $D$ as a subspace. 
We denote by $\mathbf{Exts}(E,D)$ the set of all extensions $\cee$ of $D$. Suppose that $V$ is a space complement of $D$ in $E$,
An element of $E$ will be denoted by the form $a+x$ with $a\in D$ and $x\in V$.
In the following Diagram ~(\ref{diagram:equivalent cohomologous ee}), suppose that two extensions $(E,\succ_{E},\prec_{E}),(E,\succ_{E}^{\prime},\prec_{E}^{\prime})\in \mathbf{Exts}(E,D)$, the map $i$ and $\pi$ are defined by
 $$i(a)=a,\,\pi(a+x)=x,\,a\in D\, \text{and}\,x\in V.$$
Then two rows are short exact sequences in the category $\mathrm{Vect}_{\bf k}$ of vector spaces. We say that {\bf a linear map $\psi:E \rightarrow E$ stabilizes $D$ }if the left square is commutative. Similarly, we say that {\bf $\psi$ co-stabilizes $V$} if the right square is commutative.
\begin{align}\label{diagram:equivalent cohomologous ee}
\begin{split}
\xymatrix{
  0 \ar[r]^{} & D \ar[d]_{\mathrm {Id}} \ar[r]^{i\,\,\,\,\,\,\,\,\,\,\,\,\,\,\,\,\,\,} & (E,\succ_{E},\prec_{E}) \ar[d]_{\psi} \ar[r]^{\,\,\,\,\,\,\,\,\,\,\,\,\,\,\,\,\,\,\pi} & V \ar[d]_{\mathrm {Id}} \ar[r]^{} & 0  \\
  0 \ar[r]^{} & D \ar[r]^{i\,\,\,\,\,\,\,\,\,\,\,\,\,\,\,\,\,\,} & (E,\succ_{E}^{\prime},\prec_{E}^{\prime}) \ar[r]^{\,\,\,\,\,\,\,\,\,\,\,\,\,\,\,\,\,\,\pi} & V \ar[r]^{} & 0   }
\end{split}
\end{align}



\begin{enumerate}
\item The two extensions are called {\bf equivalent} if there exists an isomorphism of dendriform algebras $\psi:(E,\succ_{E},\prec_{E}) \rightarrow (E,\succ_{E}^{\prime},\prec_{E}^{\prime})$ which stabilizes $D$ in Diagram ~(\ref{diagram:equivalent cohomologous ee}). We denote it by $(E,\succ_{E},\prec_{E})\equiv (E,\succ_{E}^{\prime},\prec_{E}^{\prime})$.
\item The two extensions are called {\bf cohomologous} if there exists an isomorphism of dendriform algebras $\psi:(E,\succ_{E},\prec_{E}) \rightarrow (E,\succ_{E}^{\prime},\prec_{E}^{\prime})$ which stabilizes $D$ and co-stabilizes $V$ in Diagram ~(\ref{diagram:equivalent cohomologous ee}). We denote it by $(E,\succ_{E},\prec_{E})\approx (E,\succ_{E}^{\prime},\prec_{E}^{\prime})$.
\end{enumerate}
\end{defn}

\begin{rem}\label{rem:extd and extdp}
The relations $\equiv$ and $\approx$ defined on the set $\mathbf{Exts}(E,D)$ are both equivalence relations. By specifically computing $\mathbf{Exts}(E,D)$, we obtain a parameterization of the dendriform algebra structures that can be defined on $E$ containing $D$ as a subalgebra. Suppose that $\mathbf{Extd}(E,D):=\mathbf{Exts}(E,D)/\equiv$, then $\mathbf{Extd}(E,D)$ gives a classification of the $ES$ problem. The set $\mathbf{Extd^{\prime}}(E,D):=\mathbf{Exts}(E,D)/\approx$ is another more strictly classifying object for the ES problem. Since any two cohomologous dendriform algebra structures that defined on $E$ are obviously equivalent, then there exists a canonical projection $\mathbf{Extd^{\prime}}(E,D)\twoheadrightarrow \mathbf{Extd}(E,D)$.
 \end{rem}

\subsection{Unified products of dendriform algebras}

\allowdisplaybreaks{
\begin{defn}
Let $D$ be a dendriform algebra, and $V$ a vector space. An {\bf extending datum} of $D$ through $V$ is a system $\Omega(D,V)=(\triangleright_{1},\triangleright_{2},\triangleleft_{1},\triangleleft_{2},\rightharpoonup_{1},\rightharpoonup_{2},\leftharpoonup_{1},\leftharpoonup_{2},f_{1},f_{2},\succ_{V},\prec_{V})$ consisting of twelve bilinear maps:
\begin{align*}
\begin{split}
\triangleright_{1},\triangleright_{2}&:\,D\times V\rightarrow V;\\
\rightharpoonup_{1},\rightharpoonup_{2}&:\,V\times D\rightarrow D;\\
f_{1},f_{2}&:\,V\times V\rightarrow D;
\end{split}
\begin{split}
\triangleleft_{1},\triangleleft_{2}&:\,V\times D\rightarrow V;\\
\leftharpoonup_{1},\leftharpoonup_{2}&:\,D\times V\rightarrow D;\\
\succ_{V},\prec_{V}&:\,V\times V\rightarrow V.
\end{split}
\end{align*}
\end{defn}
}



\begin{defn}\label{defn:unified product}
    The {\bf unified product} of $D$ and $\Omega(D,V)$, denoted by $D\natural_{\Omega(D,V)} V=\dnv$, is the vector space $D\times V$, together with two bilinear maps:
\allowdisplaybreaks{
\begin {align*}
\succeq,\preceq&: (D\times V)\times (D\times V)\rightarrow (D\times V).
\end{align*}
defined as follows with $a,b\in D$ and $x,y\in V$,
\begin{align}\label{formulas:unified product}
\begin{split}
(a,x)\succeq(b,y)&:=(a\succ b+a\leftharpoonup_{1} y+x\rightharpoonup_{1} b+f_{1}(x,y),\,a\triangleright_{1} y+x\triangleleft_{1} b+x\succ_{V} y);\\
(a,x)\preceq(b,y)&:=(a\prec b+a\leftharpoonup_{2} y+x\rightharpoonup_{2} b+f_{2}(x,y),\,a\triangleright_{2} y+x\triangleleft_{2} b+x\prec_{V} y).
\end{split}
\end{align}}such that the triple $\big(D\times V,\succeq,\preceq\big)$ is a dendriform algebra. In this case, the extending datum $\Omega(D,V)$ is called a {\bf dendriform extending structure} of $D$ through $V$. The maps $\triangleright_{1},\triangleright_{2},\triangleleft_{1},\triangleleft_{2},
    \rightharpoonup_{1},\rightharpoonup_{2},\leftharpoonup_{1}$ and $\leftharpoonup_{2}$ are called {\bf actions} of $\Omega(D,V)$ and $f_{1},f_{2}$ are called {\bf cocycles} of $\Omega(D,V)$.
  \end{defn}
We denote by $\mathcal{O}(D,V)$ the set of all dendriform extending structures of $D$ through $V$.
For the sake of simplification of symbols, we give the following conventions:
 \begin{align}\label{formulas:dendriform to associative}
 \begin{aligned}
 \begin{split}
 \triangleright:=&\triangleright_{1}+\triangleright_{2};\\
 \rightharpoonup:=&\rightharpoonup_{1}+\rightharpoonup_{2};\\
 f:=&f_{1}+f_{2};\\
 \star_{E}:=&\succ_{E}+\prec_{E}\\
 \end{split}
 \end{aligned}
  \begin{aligned}
 \begin{split}
  \triangleleft:=&\triangleleft_{1}+\triangleleft_{2};\\
  \leftharpoonup:=&\leftharpoonup_{1}+\leftharpoonup_{2};\\
\star_{V}:=&\succ_{V}+\prec_{V};\\
\underline{\star}:=&\succeq+\preceq.
 \end{split}
 \end{aligned}
 \end{align}

\begin{thm}\label{thm:datum and unified product}
Let $D$ be a dendriform algebra, $V$ a vector space, and $\Omega(D,V)$ an extending datum of $D$ through $V$. Then the following statements are equivalent:
\begin{enumerate}
\item The object $\dnv$ is a unified product;
\item The following conditions hold for all $a,b\in D$ and $x,y,z\in V$,
 \allowdisplaybreaks{
      \begin{align*}
        (D1)\,\,&\text{$\big(V,\triangleright_{1},\triangleright_{2},\triangleleft_{1},\triangleleft_{2}\big)$ is a $D$ bimodule, i.e.,  satisfying {\rm Eqs. ~\eqref{formulas:left module definition}-\eqref{formulas:bimodule definition};}}\\
        (D2)\,\,&(a\star b)\leftharpoonup_{1} x=a\succ(b\leftharpoonup_{1} x)+a\leftharpoonup_{1}(b\triangleright_{1} x);\\
        &(a\prec b)\leftharpoonup_{2} x=a\prec (b\leftharpoonup x)+a\leftharpoonup_{2}(b\triangleright x);\\&(a\succ b)\leftharpoonup_{2} x=a\succ(b\leftharpoonup_{2} x)+a\leftharpoonup_{1}(b\triangleright_{2} x);\\
        (D3)\,\,&(x\rightharpoonup a)\succ b+(x\triangleleft a)\rightharpoonup_{1} b=x\rightharpoonup_{1}(a\succ b);\\
        &(x\rightharpoonup_{2} a)\prec b+(x\triangleleft_{2} a)\rightharpoonup_{2} b=x\rightharpoonup_{2}(a\star b);\\&(x\rightharpoonup_{1} a)\prec b+(x\triangleleft_{1} a)\rightharpoonup_{2} b=x\rightharpoonup_{1}(a\prec b);\\
        (D4)\,\,&(a\leftharpoonup x)\succ b+(a\triangleright x)\rightharpoonup_{1} b=a\succ(x\rightharpoonup_{1} b)+a\leftharpoonup_{1}(x\triangleleft_{1} b);\\
        &(a\leftharpoonup_{2} x)\prec b+(a\triangleright_{2} x)\rightharpoonup_{2} b=a\prec (x\rightharpoonup b)+a\leftharpoonup_{2}(x\triangleleft b);\\&(a\leftharpoonup_{1} x)\prec b+(a\triangleright_{1} x)\rightharpoonup_{2} b=a\succ(x\rightharpoonup_{2} b)+a\leftharpoonup_{1}(x\triangleleft_{2} b);\\
        (D5)\,\,&f(x,y)\succ a+(x\star_{V} y)\rightharpoonup_{1} a=x\rightharpoonup_{1}(y\rightharpoonup_{1} a)+f_{1}\big( x,y\triangleleft_{1} a\big);\\
        &f_{2}(x,y)\prec a+(x\prec_{V} y)\rightharpoonup_{2} a=x\rightharpoonup_{2}(y\rightharpoonup a)+f_{2}\big( x,y\triangleleft a\big);\\&f_{1}(x,y)\prec a+(x\succ_{V} y)\rightharpoonup_{2} a=x\rightharpoonup_{1}(y\rightharpoonup_{2} a)+f_{1}\big( x,y\triangleleft_{2} a\big);\\
        (D6)\,\,&(x\star_{V} y)\triangleleft_{1} a=x\triangleleft_{1}(y\rightharpoonup_{1} a)+x\succ_{V}(y\triangleleft_{1} a);\\
                &(x\prec_{V} y)\triangleleft_{2} a=x\triangleleft_{2}(y\rightharpoonup a)+x\prec_{V}(y\triangleleft a);\\
                &(x\succ_{V} y)\triangleleft_{2} a=x\triangleleft_{1}(y\rightharpoonup_{2} a)+x\succ_{V}(y\triangleleft_{2} a);\\
        (D7)\,\,&(a\leftharpoonup x)\leftharpoonup_{1} y+f_{1}\big(a\triangleright x, y\big)= a\succ f_{1}(x,y)+ a\leftharpoonup_{1} (x\succ_{V} y) ;\\
                &(a\leftharpoonup_{2} x)\leftharpoonup_{2} y+f_{2}\big(a\triangleright_{2} x, y\big)= a\prec f(x,y)+ a\leftharpoonup_{2} (x\star_{V} y) ;\\                        &(a\leftharpoonup_{1} x)\leftharpoonup_{2} y+f_{2}\big(a\triangleright_{1} x, y\big)= a\succ f_{2}(x,y)+ a\leftharpoonup_{1} (x\prec_{V} y) ;\\
        (D8)\,\,&(a\leftharpoonup x)\triangleright_{1} y+(a\triangleright x)\succ_{V} y= a\triangleright_{1} (x\succ_{V} y);\\
                &(a\leftharpoonup_{2} x)\triangleright_{2} y+(a\triangleright_{2} x)\prec_{V} y= a\triangleright_{2} (x\star_{V} y);\\
                &(a\leftharpoonup_{1} x)\triangleright_{2} y+(a\triangleright_{1} x)\prec_{V} y= a\triangleright_{1} (x\prec_{V} y);\\
        (D9)\,\,&(x\rightharpoonup a)\leftharpoonup_{1} y+f_{1}\big(x\triangleleft a,y\big)=x\rightharpoonup_{1}(a\leftharpoonup_{1} y)+f_{1}\big(x,a\triangleright_{1} y\big);\\
               &(x\rightharpoonup_{2} a)\leftharpoonup_{2} y+f_{2}\big(x\triangleleft_{2} a,y\big)=x\rightharpoonup_{2}(a\leftharpoonup y)+f_{2}\big(x,a\triangleright y\big);\\
        &(x\rightharpoonup_{1} a)\leftharpoonup_{2} y+f_{2}\big(x\triangleleft_{1} a,y\big)=x\rightharpoonup_{1}(a\leftharpoonup_{2} y)+f_{1}\big(x,a\triangleright_{2} y\big);\\
        (D10)\,&(x\rightharpoonup a)\triangleright_{1} y+(x\triangleleft a)\succ_{V} y=x\triangleleft_{1}(a\leftharpoonup_{1} y)+ x\succ_{V}(a\triangleright_{1} y);\\
        &(x\rightharpoonup_{2} a)\triangleright_{2} y+(x\triangleleft_{2} a)\prec_{V} y=x\triangleleft_{2}(a\leftharpoonup y)+ x\prec_{V}(a\triangleright y);\\
        &(x\rightharpoonup_{1} a)\triangleright_{2} y+(x\triangleleft_{1} a)\prec_{V} y=x\triangleleft_{1}(a\leftharpoonup_{2} y)+ x\succ_{V}(a\triangleright_{2} y);\\
        (D11)\,&f(x,y)\leftharpoonup_{1} z+f_{1}\big(x\star_{V} y,z\big)=x\rightharpoonup_{1} f_{1}(y,z)+f_{1}\big(x,y\succ_{V} z\big);\\
        &f_{2}(x,y)\leftharpoonup_{2} z+f_{2}\big(x\prec_{V} y,z\big)=x\rightharpoonup_{2} f(y,z)+f_{2}\big(x,y\star_{V} z\big);\\
        &f_{1}(x,y)\leftharpoonup_{2} z+f_{2}\big(x\succ_{V} y,z\big)=x\rightharpoonup_{1} f_{2}(y,z)+f_{1}\big(x,y\prec_{V} z\big);\\
        (D12)\,&f(x,y)\triangleright_{1} z+(x\star_{V} y)\succ_{V} z=x\triangleleft_{1} f_{1}(y,z)+x\succ_{V} (y\succ_{V} z); \\
        &f_{2}(x,y)\triangleright_{2} z+(x\prec_{V} y)\prec_{V} z=x\triangleleft_{2} f(y,z)+x\prec_{V} (y\star_{V} z);\\
        &f_{1}(x,y)\triangleright_{2} z+(x\succ_{V} y)\prec_{V} z=x\triangleleft_{1} f_{2}(y,z)+x\succ_{V} (y\prec_{V} z).
    \end{align*}
    }
  \end{enumerate}
    \end{thm}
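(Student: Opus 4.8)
The plan is to expand the three defining dendriform identities \eqref{definition:dendriform:formula1}--\eqref{definition:dendriform:formula3} for the pair $(\succeq,\preceq)$ on $D\times V$ given by \eqref{formulas:unified product}, and to show that, after separating the $D$-component from the $V$-component, they are equivalent to the list (D1)--(D12). The key first reduction is that $\succeq$ and $\preceq$ are bilinear and every element of $D\times V$ has the form $(a,0)+(0,x)$, so each of the three identities is trilinear in its arguments and therefore holds on all of $D\times V$ if and only if it holds whenever each of the three arguments is taken from the spanning set $\{(a,0):a\in D\}\cup\{(0,x):x\in V\}$. This leaves $2^{3}=8$ cases per identity, indexed by which of the three slots is filled from $D$ and which from $V$.

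Next I would go through the $8$ cases. When all three slots come from $D$, \eqref{formulas:unified product} gives $(a,0)\succeq(b,0)=(a\succ b,0)$ and $(a,0)\preceq(b,0)=(a\prec b,0)$, so each identity reduces to the corresponding dendriform identity for $D$, which holds by hypothesis; hence this case imposes no condition. For each of the remaining $7$ cases I would substitute \eqref{formulas:unified product} into both sides of each of \eqref{definition:dendriform:formula1}--\eqref{definition:dendriform:formula3}, expand, discard the terms that vanish because a zero is inserted into one of the bilinear maps of $\Omega(D,V)$, and then compare the $D$-components and the $V$-components separately (both projections $D\times V\to D$ and $D\times V\to V$ being linear). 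The result is naturally organized by the number of $V$-arguments: the three cases with exactly one $V$-argument yield, on the $V$-side, the left module, right module, and bimodule compatibility axioms \eqref{formulas:left module definition}--\eqref{formulas:bimodule definition}, i.e. condition (D1), and on the $D$-side the equations (D2), (D3), (D4) according to whether the $V$-entry sits in the third, first, or middle slot; the three cases with two $V$-arguments yield (D5) and (D6) (for $V$-entries in slots $1,2$), (D7) and (D8) (slots $2,3$), (D9) and (D10) (slots $1,3$), the first member of each pair being the $D$-component and the second the $V$-component; and the one case with three $V$-arguments yields (D11) on the $D$-side and (D12) on the $V$-side. Within each of (D2)--(D12) the three displayed sub-equations are precisely the contributions of \eqref{definition:dendriform:formula1}, \eqref{definition:dendriform:formula2}, \eqref{definition:dendriform:formula3}, in that order, and reading the same expansions in reverse shows that (D1)--(D12) force \eqref{definition:dendriform:formula1}--\eqref{definition:dendriform:formula3} to hold on the spanning set, hence everywhere; this establishes both implications and identifies $\dnv$ as a unified product exactly when (2) holds.

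The conceptual content is slight: once the reduction to the $8$ argument-patterns is in place, everything is forced, and the only real work is the expansion, which is long but entirely mechanical. I expect the main obstacle to be purely organizational --- across the $21$ non-trivial (pattern, axiom) pairs, hence $42$ equations of maps, one must keep track without error of which of $\succ,\prec,\triangleright_{1},\triangleright_{2},\triangleleft_{1},\triangleleft_{2},\leftharpoonup_{1},\leftharpoonup_{2},\rightharpoonup_{1},\rightharpoonup_{2},f_{1},f_{2},\succ_{V},\prec_{V}$ contributes to which component, distinguish carefully the ``$1$''- and ``$2$''-decorated maps, and correctly recognize the composites $\star,\star_{V},\triangleright,\triangleleft,\leftharpoonup,\rightharpoonup,f,\underline{\star}$ of \eqref{formulas:dendriform to associative} wherever a ``$1$''-part and a ``$2$''-part add up on expansion (as when $(a,x)\succeq(b,y)+(a,x)\preceq(b,y)$ is formed). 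A practical way to contain this is to handle one axiom at a time: fix \eqref{definition:dendriform:formula1}, run through its eight argument-patterns, harvest from each the single sub-equation of (D1)--(D12) whose left-hand side has the matching ``$\succ$/$\succeq$'' shape, and then repeat for \eqref{definition:dendriform:formula2} and \eqref{definition:dendriform:formula3}.
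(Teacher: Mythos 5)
Your proposal is correct and follows essentially the same route as the paper: reduce the three dendriform axioms for $(\succeq,\preceq)$ to the spanning set $\{(a,0)\}\cup\{(0,x)\}$, expand via Eq.~\eqref{formulas:unified product}, and compare $D$- and $V$-components case by case, with the assignment of argument-patterns to (D1)--(D12) matching the paper's exactly.
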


\begin{proof} The object $\dnv$ is a unified product if and only if $\Omega(D,V)$ is an extending datum such that the following conditions hold for all $a,b,c\in D$ and $x,y,z\in V$,
       \allowdisplaybreaks{
        \begin{align}
        \begin{split}\label{zformula(not):unified product}
    \big((a,x)\underline{\star} (b,y)\big)&\succeq (c,z)=(a,x)\succeq \big((b,y)\succeq (c,z)\big);\\
    \big((a,x)\preceq (b,y)\big)&\preceq (c,z)=(a,x)\preceq \big((b,y)\underline{\star} (c,z)\big);\\
    \big((a,x)\succeq (b,y)\big)&\preceq (c,z)=(a,x)\succeq \big((b,y)\preceq (c,z)\big).
    \end{split}
    \end{align}

Since $\dnv$ is a direct sum of $D$ and $V$ as vector spaces, Eq. ~(\ref{zformula(not):unified product}) holds if and only if it holds for all generators of $\dnv$, i.e., for the set $\{(a,0)|a\in D\}\cup \{(0,x)|x\in V\}$. By Eq. ~(\ref{formulas:unified product}), we have
 \begin{align*}
0=&\big((a,0)\underline{\star} (b,0)\big)\succeq (0,x)-(a,0)\succeq \big((b,0)\succeq (0,x)\big)\\
=&(a\star b,0)\succeq \big(0,x)-(a,0)\succeq (b\leftharpoonup_{1} x,b\triangleright_{1} x)\\
=&\big((a\star b)\leftharpoonup_{1} x,(a\star b)\triangleright_{1} x\big)
-\big(a\succ(b\leftharpoonup_{1} x)+a\leftharpoonup_{1}(b\triangleright_{1} x),a\triangleright_{1}(b\triangleright_{1} x)\big);\\
0=&\big((a,0)\preceq (b,0)\big)\preceq (0,x)-(a,0)\preceq \big((b,0)\underline{\star} (0,x)\big)\\
=&(a\prec b,0)\preceq \big(0,x)-(a,0)\succeq (b\leftharpoonup x,b\triangleright x)\\
=&\big((a\prec b)\leftharpoonup_{1} x,(a\prec b)\triangleright_{2} x\big)
-\big(a\prec (b\leftharpoonup x)+a\leftharpoonup_{2}(b\triangleright x),a\triangleright_{2}(b\triangleright x)\big);\\
0=&\big((a,0)\succeq (b,0)\big)\preceq (0,x)-(a,0)\succeq \big((b,0)\preceq (0,x)\big)\\
=&(a\succ b,0)\preceq \big(0,x)-(a,0)\succeq (b\leftharpoonup_{2} x,b\triangleright_{2} x)\\
=&\big((a\succ b)\leftharpoonup_{2} x,(a\succ b)\triangleright_{2} x\big)
-\big(a\succ(b\leftharpoonup_{2} x)+a\leftharpoonup_{1}(b\triangleright_{2} x),a\triangleright_{1}(b\triangleright_{2} x)\big).
    \end{align*}

Hence, (D2) and Eq. ~\eqref{formulas:left module definition} hold if and only if Eq. ~(\ref{zformula(not):unified product}) holds for the triple $(a,0)$, $(b,0)$, $(0,x)$ with $a,b\in D$ and $x\in V$.

 Similarly, we can obtain: (D3) and Eq. ~\eqref{formulas:right module definition} hold if and only if Eq. ~(\ref{zformula(not):unified product}) holds for the triple $(0,x)$, $(a,0)$, $(b,0)$ with $a,b\in D$ and $x\in V$; (D4) and Eq.~\eqref{formulas:bimodule definition} hold if and only if Eq. ~(\ref{zformula(not):unified product}) holds for the triple $(a,0)$, $(0,x)$, $(b,0)$ with $a,b\in D$ and $x\in V$; (D5) and (D6) hold if and only if Eq. ~(\ref{zformula(not):unified product}) holds for the triple $(0,x)$, $(0,y)$, $(a,0)$ with $a\in D$ and $x,y\in V$; (D7) and (D8) hold if and only if Eq. ~(\ref{zformula(not):unified product}) holds for the triple $(a,0)$, $(0,x)$, $(0,y)$ with $a\in D$ and $x,y\in V$; (D9) and (D10) hold if and only if Eq. ~(\ref{zformula(not):unified product}) holds for the triple $(0,x)$, $(a,0)$, $(0,y)$ with $a\in D$ and $x,y\in V$; (D11) and (D12) hold if and only if Eq. ~(\ref{zformula(not):unified product}) holds for the triple $(0,x)$, $(0,y)$, $(0,z)$ with $x,y,z\in V$.
}
 \end{proof}



%
\begin{rem}\label{rem:relations of extending structure}
\begin{enumerate}
\item By Theorem ~\ref{thm:datum and unified product}, a dendriform extending structure of $D$ through $V$ can be viewed as an extending datum $\Omega(D,V)$ satisfying the conditions (D1)-(D12).

\item By Remark~\ref{rem:asso}~\ref{asso}, we know that $(D,\star)$ is an associative algebra. By Theorem ~\ref{thm:datum and unified product}, then $\big(\triangleright,\triangleleft,\rightharpoonup,\leftharpoonup,f,\star_{V}\big)$ is an associative algebra extending structure of $(D,\star)$ through $V$, more details we refer to ~\cite{AM16}.
\item By Remark~\ref{rem:asso}~\ref{preLie}, we know that $(D,\diamond)$ is a preLie algebra. Define
\begin{align}\label{formulas:dendriform to prelie}
 \begin{aligned}
 \begin{split}
 a\vdash x:=&a\triangleright_{1} x-x\triangleleft_{2} a;\\
 x\gg a:=&x\rightharpoonup_{1} a-a \leftharpoonup_{2} x;\\
 \tilde{f}(x,y):=&f_{1}(x,y)-f_{2}(y,x);\\
 \end{split}
 \end{aligned}
  \begin{aligned}
 \begin{split}
  x\dashv a:=&x\triangleleft_{1} a-a\triangleright_{2} x;\\
  a\ll x:=&a \leftharpoonup_{1} x-x\rightharpoonup_{2} a;\\
x\diamond_{V} y:=&x\succ_{V}y-y\prec_{V}x,\,a\in D,x,y\in V.
 \end{split}
 \end{aligned}
 \end{align}
By Theorem ~\ref{thm:datum and unified product}, then $\big(\vdash,\dashv,\gg,\ll,\tilde{f},\diamond_{V}\big)$ is a preLie algebra extending structure of $(D,\diamond)$ through $V$, more details we refer to ~\cite{H19b}.

\item Define $[x,y]:=x\star y-y\star x=x\succ y+x\prec y-y\succ x-y\prec x$, we know that $(D,[,])$ is a Lie algebra in ~\cite{H12}. For all $a\in D$, $x,y\in V$, define
 \begin{align}\label{formulas:asso to lie}
 \begin{split}
 x\blacktriangleleft a:=&x\triangleleft a-a\triangleright x=x\triangleleft_{1}a+x\triangleleft_{2}a-a\triangleright_{1}x-a\triangleright_{2}x;\\
 x \blacktriangleright a:=&x\rightharpoonup a-a\leftharpoonup x=x\rightharpoonup_{1}a+x\rightharpoonup_{2}a-a\leftharpoonup_{1}x-a\leftharpoonup_{2}x;\\
\hat{f}(x,y):=&f(x,y)-f(y,x)=f_{1}(x,y)+f_{2}(x,y)-f_{1}(y,x)-f_{2}(y,x);\\
\{x,y\}:=&x\star_{V} y-y\star_{V} x=x\succ_{V} y+x\prec_{V} y-y\succ_{V} x-y\prec_{V} x.
\end{split}
 \end{align}
By Theorem ~\ref{thm:datum and unified product}, then $\big(\blacktriangleright,\blacktriangleleft,\hat{f},\{,\}\big)$ is a Lie algebra extending structure of $(D,[,])$ through $V$, more details we refer to ~\cite{AM14L}.
\item Define $[x,y]:=x\diamond y-y\diamond x=x\succ y+x\prec y-y\succ x-y\prec x$, we know that $(D,[,])$ is a Lie algebra in ~\cite{Agu002}. For all $a\in D$, $x,y\in V$, define
 \begin{align}\label{formulas:preLie to lie}
 \begin{split}
 x\blacktriangleleft a:=&a\vdash x-x\dashv a=x\triangleleft_{1}a+x\triangleleft_{2}a-a\triangleright_{1}x-a\triangleright_{2}x;\\
 x \blacktriangleright a:=&x \gg a-a\ll x=x\rightharpoonup_{1}a+x\rightharpoonup_{2}a-a\leftharpoonup_{1}x-a\leftharpoonup_{2}x;\\
\hat{f}(x,y):=&\tilde{f}(x,y)-\tilde{f}(y,x)=f_{1}(x,y)+f_{2}(x,y)-f_{1}(y,x)-f_{2}(y,x);\\
\{x,y\}:=&x\diamond_{V} y-y\diamond_{V} x=x\succ_{V} y+x\prec_{V} y-y\succ_{V} x-y\prec_{V} x.
\end{split}
 \end{align}
By Theorem ~\ref{thm:datum and unified product}, then $\big(\blacktriangleright,\blacktriangleleft,\hat{f},\{,\}\big)$ is a Lie algebra extending structure of $(D,[,])$ through $V$, more details we refer to ~\cite{AM14L,H19b}.
\end{enumerate}
 \end{rem}
{\em Note:} In the rest of this paper, we will delete the trivial maps of $\Omega(D,V)$ for simplicity. For example, if the maps $\rightharpoonup_{1},\rightharpoonup_{2},\leftharpoonup_{1}$ and $\leftharpoonup_{2}$ are all trivial, then the extending datum will be denoted by $\Omega(D,V)=\big(\triangleright_{1},\triangleright_{2},\triangleleft_{1},\triangleleft_{2},f_{1},f_{2},\succ_{V},\prec_{V})$ for simplicity. Now we will give some special cases of unified products as examples.
\begin{exam}\label{exam:direct sum and unified}
The extending datum $\Omega(D,V)=(\succ_{V},\prec_{V})$ is a dendriform extending structure if and only if the triple $(V,\succ_{V},\prec_{V})$ is a dendriform algebra
in Theorem ~\ref{thm:datum and unified product}. In this case, the
corresponding unified product is a direct sum defined in Definition ~\ref{defn:direct sum algebra}
and Proposition ~\ref{prop:direct sum} is a corollary of Theorem ~\ref{thm:datum and unified product}.
\end{exam}

\begin{exam}\label{exam:weak semidirect product and unified}
The extending datum $\Omega(D,V)=\big(\triangleright_{1},\triangleright_{2},\triangleleft_{1},\triangleleft_{2}\big)$ is a dendriform extending structure if and only if the 5-tuple $(V,\triangleright_{1},\triangleright_{2},\triangleleft_{1},\triangleleft_{2})$ is a $D$ bimodule in Theorem ~\ref{thm:datum and unified product}. In this case, the
corresponding unified product is an abelian semidirect product defined in Definition ~\ref{defn:abelian semidirect product} and Proposition ~\ref{prop:weak semidirect product bimodule} is a corollary of Theorem ~\ref{thm:datum and unified product}.
\end{exam}

\subsection{The relations between dendriform extending structures and extensions of dendriform algebras}
In this subsection, we establish a bijection between the set of dendriform extending structures $\mathcal{O}(D,V)$ and the set of extensions $\mathbf{Exts}(E,D)$.

Let $D$ be a dendriform algebra, $E$ a vector space containing $D$ as a subspace and $V$ a space complement of $D$ in $E$. Suppose that dendriform extending structure $\Omega(D,V)\in \mathcal{O}(D,V)$, and $\dnv$ is the corresponding unified product. The two rows in the following Diagram ~(\ref{diagram:equivalent cohomologous ue}) are short exact sequences in the category $\mathrm{Vect}_{\bf k}$, where $i$ and $\pi$ are defined in Diagram ~(\ref{diagram:equivalent cohomologous ee}), $i^{\prime}$ and $\pi^{\prime}$ are defined by $$i^{\prime}(a)=(a,0),\quad \pi^{\prime}(a,x)=x,\, a\in D\, \text{and}\, x\in V.$$
 Similar to Definition~\ref{defn:equivalent cohomolous},
we define the relation $\equiv$ and $\approx$ between the unified product $\dnv$ and the extension $(E,\succ_{E},\prec_{E})\in \mathbf{Exts}(E,D)$ in the following Diagram ~(\ref{diagram:equivalent cohomologous ue}), and denoted by $\dnv\equiv (E,\succ_{E},\prec_{E})$ and $\dnv\approx (E,\succ_{E},\prec_{E})$ respectively.
\begin{align}\label{diagram:equivalent cohomologous ue}
\begin{split}
\xymatrix{
  0 \ar[r]^{} & D \ar[d]_{\mathrm {Id}} \ar[r]^{i^{\prime}\,\,\,\,\,} & \dnv \ar[d]_{\varphi} \ar[r]^{\,\,\,\,\,\pi^{\prime}} & V \ar[d]_{\mathrm {Id}} \ar[r]^{} & 0  \\
  0 \ar[r]^{} & D \ar[r]^{i\,\,\,\,\,\,\,\,\,\,\,\,\,\,\,\,\,\,} & (E,\succ_{E},\prec_{E}) \ar[r]^{\,\,\,\,\,\,\,\,\,\,\,\,\,\,\,\,\,\,\pi} & V \ar[r]^{} & 0   }\,\,\,\,\,\,\,\,\,\,\,\,
\end{split}
\end{align}

\begin{thm}\label{thm:extension to unified}
Let $D\subset E $ be an extension of dendriform algebras. Then there exists a dendriform extending structure $\Omega(D,V)$ of $D$ through $V$, such that $\dnv\approx \cee$ in {\rm Diagram ~(\ref{diagram:equivalent cohomologous ue})}.
\end{thm}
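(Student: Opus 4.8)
The plan is to construct the extending datum $\Omega(D,V)$ explicitly from the given extension $D\subset E$ by first fixing the linear projection $p:E\to D$ and the linear projection $q:E\to V$ associated with the decomposition $E=D\oplus V$ (as vector spaces), and then defining the twelve bilinear maps by composing the two operations $\succ_E,\prec_E$ of $E$ with $p$ and $q$ on appropriate arguments. Concretely, for $a,b\in D$ and $x,y\in V$ I would set
\begin{align*}
a\leftharpoonup_{1} x&:=p(a\succ_E x), & x\rightharpoonup_{1} a&:=p(x\succ_E a), & f_{1}(x,y)&:=p(x\succ_E y),\\
a\leftharpoonup_{2} x&:=p(a\prec_E x), & x\rightharpoonup_{2} a&:=p(x\prec_E a), & f_{2}(x,y)&:=p(x\prec_E y),\\
a\triangleright_{1} x&:=q(a\succ_E x), & x\triangleleft_{1} a&:=q(x\succ_E a), & x\succ_{V} y&:=q(x\succ_E y),\\
a\triangleright_{2} x&:=q(a\prec_E x), & x\triangleleft_{2} a&:=q(x\prec_E a), & x\prec_{V} y&:=q(x\prec_E y),
\end{align*}
while noting that since $D$ is a subalgebra we have $a\succ_E b,\,a\prec_E b\in D$, so the "$V$-part" of the product on $D\times D$ is automatically trivial.

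Next I would verify that this $\Omega(D,V)$ is a dendriform extending structure, i.e. that it satisfies conditions (D1)--(D12) of Theorem~\ref{thm:datum and unified product}. Rather than checking each identity by hand, the efficient argument is to exhibit the linear isomorphism $\varphi:\dnv\to E$ given by $\varphi(a,x):=a+x$, observe that it is a bijection because $V$ is a space complement of $D$ in $E$, and check directly from the definitions of the maps above and of the unified product (Eq.~\eqref{formulas:unified product}) that $\varphi$ transports $\succeq,\preceq$ to $\succ_E,\prec_E$; that is, $\varphi\big((a,x)\succeq(b,y)\big)=(a+x)\succ_E(b+y)$ and similarly for $\preceq$. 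This follows by expanding $(a+x)\succ_E(b+y)$ using bilinearity into the four terms $a\succ_E b$, $a\succ_E y$, $x\succ_E b$, $x\succ_E y$, and decomposing each via $p$ and $q$. Since $(E,\succ_E,\prec_E)$ is a dendriform algebra and $\varphi$ is a linear isomorphism intertwining the operations, the transported structure $(D\times V,\succeq,\preceq)$ is automatically a dendriform algebra, so $\dnv$ is a unified product and hence, by Theorem~\ref{thm:datum and unified product}, $\Omega(D,V)$ satisfies (D1)--(D12).

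Finally I would check that $\varphi$ realizes the relation $\approx$ in Diagram~\eqref{diagram:equivalent cohomologous ue}, i.e. that it stabilizes $D$ and co-stabilizes $V$. Stabilization amounts to $\varphi\circ i'=i$, which is immediate since $\varphi(a,0)=a$; co-stabilization amounts to $\pi\circ\varphi=\pi'$, which holds because $\pi(a+x)=x=\pi'(a,x)$ by the definitions of $\pi$ and $\pi'$. Together with the fact, just established, that $\varphi$ is an isomorphism of dendriform algebras, this gives $\dnv\approx\cee$.

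The routine part is the term-by-term expansion showing $\varphi$ is multiplicative; the only genuine subtlety to be careful about is the use of $D$ being a \emph{subalgebra} (not merely a subspace): this is exactly what guarantees $p(a\succ_E b)=a\succ_E b$ and $q(a\succ_E b)=0$, so that no spurious "$D\times D\to V$" components appear and the first components of Eq.~\eqref{formulas:unified product} contain the genuine product $a\succ b$ of $D$ rather than a projection of it. The main obstacle, such as it is, lies in organizing the bookkeeping of the twelve maps and confirming that the definitions above are precisely the ones that make the expansion of $(a+x)\succ_E(b+y)$ match Eq.~\eqref{formulas:unified product} on the nose; once the correspondence between the four pieces of each product and the maps $\leftharpoonup_i, \rightharpoonup_i, f_i, \triangleright_i, \triangleleft_i, \succ_V,\prec_V$ is set up correctly, the proof is essentially formal.
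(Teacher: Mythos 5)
Your proposal is correct and follows essentially the same route as the paper: defining the twelve maps by projecting $\succ_E,\prec_E$ onto $D$ and $V$ (your $p,q$ are exactly the paper's $\rho$ and $\mathrm{Id}-\rho$), proving $\varphi(a,x)=a+x$ is a multiplicative linear isomorphism so that the dendriform axioms transport from $E$ to $D\,\natural\, V$, and then checking that $\varphi$ stabilizes $D$ and co-stabilizes $V$. No gaps.
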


\begin{proof}
 In the category $\mathrm{Vect}_{\mathbf{\bf k}}$, given an inclusion map $i:D\rightarrow E$, there exists a linear retraction $\rho:E\rightarrow D$, i.e., there exists a linear map $\rho:E\rightarrow D$, such that $\rho\, i=\mathrm{Id}_{D}$. Then $V:=\mathrm{ker}\,\rho$ is a space complement of $D$ in $E$, and we get a short exact sequence in Diagram ~(\ref{diagram:equivalent cohomologous ue}).
Let's list some useful facts: $\succ_{E}|_{D}=\succ$ and $\prec_{E}|_{D}=\prec$, $\rho(a+x)=a$, $a\in D,\,x\in V$.

First, we define the extending datum $\Omega(D,V)=\big(\triangleright_{1},\triangleright_{2},\triangleleft_{1},\triangleleft_{2},
\rightharpoonup_{1},\rightharpoonup_{2},\leftharpoonup_{1},\leftharpoonup_{2},f_{1},f_{2},\succ_{V},\prec_{V})$ of $D$ through $V$ as follows with $a\in D$ and $x,y\in V$,
\allowdisplaybreaks{
      \begin{align}\label{formulas:extension to unified}
      \begin{split}
        a\triangleright_{1} x&:=a\succ_{E} x-\rho(a\succ_{E} x);\\
        x\triangleleft_{1} a&:=x\succ_{E} a-\rho(x\succ_{E} a);\\
        a\leftharpoonup_{1} x&:=\rho(a\succ_{E} x);\\
        x\rightharpoonup_{1} a&:=\rho(x\succ_{E} a);\\
        f_{1}(x,y)&:=\rho(x\succ_{E} y);\\
        x\succ_{V} y&:=x\succ_{E} y-\rho(x\succ_{E} y);\\
       \end{split}
             \begin{split}
        a\triangleright_{2} x&:=a\prec_{E} x-\rho(a\prec_{E} x);\\
        x\triangleleft_{2} a&:=x\prec_{E} a-\rho(x\prec_{E} a);\\
        a\leftharpoonup_{2} x&:=\rho(a\prec_{E} x);\\
        x\rightharpoonup_{2} a&:=\rho(x\prec_{E} a);\\
        f_{2}(x,y)&:=\rho(x\prec_{E} y);\\
        x\prec_{V} y&:=x\prec_{E} y-\rho(x\prec_{E} y).
       \end{split}
        \end{align}
        }
Second, we prove that the extending datum $\Omega(D,V)$ is a dendriform extending structure. By Theorem~\ref{thm:datum and unified product}, we only need prove that $\dnv$ is a unified product by defining a linear map $\varphi$ as follows:
\begin{align}\label{formulas:map eu}
\begin{split}
\varphi:\,\,\dnv&\rightarrow \cee\\
(a,x)&\mapsto a+x,a \in D,x\in V.
\end{split}
\end{align}

Since $E=D+V$, $D\cap V=0$, suppose $\varphi(a,x)=a+x=0$, $a\in D,x\in V$, we have $a=0,x=0$, therefore $\varphi$ is injective. For any $u\in E$, $\varphi\big(\rho(u),u-\rho(u)\big)=u$, therefore $\varphi$ is surjective. Hence, $\varphi$ is a linear isomorphism. By Eq.~(\ref{formulas:unified product}), for all $a,b\in D$ and $x,y \in V$, we have
\allowdisplaybreaks{
\begin{align}
\varphi\big((a,x)\succeq(b,y)\big)
&=\varphi\big(a\succ b+a\leftharpoonup_{1} y+x\rightharpoonup_{1} b+f(x,y),a\triangleright_{1} y+x\triangleleft_{1} b+x\succ_{V} y\big)\notag\\
&=a\succ b+a\leftharpoonup_{1} y+x\rightharpoonup_{1} b+f_{1}(x,y)+a\triangleright_{1} y+x\triangleleft_{1} b+x\succ_{V} y\notag\\
&=a\succ b+(a\leftharpoonup_{1} y+a\triangleright_{1} y)+(x\rightharpoonup_{1} b+x\triangleleft_{1} b)+\big(f_{1}(x,y)+x\succ_{V} y\big)\notag\\
&=a\succ_{E} b+a\succ_{E} y+x\succ_{E} b+x\succ_{E} y\notag\\
&=(a+x)\succ_{E}(b+y)\notag\\
&=\varphi(a,x)\succ_{E}\varphi(b,y);\label{formulas:preserve multiplication1}\\
\varphi\big((a,x)\preceq(b,y)\big)
&=\varphi\big(a\prec b+a\leftharpoonup_{2} y+x\rightharpoonup_{2} b+f_{2}(x,y),a\triangleright_{2} y+x\triangleleft_{2} b+x\prec_{V} y\big)\notag\\
&=a\prec b+a\leftharpoonup_{2} y+x\rightharpoonup_{2} b+f_{2}(x,y)+a\triangleright_{2} y+x\triangleleft_{2} b+x\prec_{V} y\notag\\
&=a\prec b+(a\leftharpoonup_{2} y+a\triangleright_{2} y)+(x\rightharpoonup_{2} b+x\triangleleft_{2} b)+\big(f_{2}(x,y)+x\prec_{V} y\big)\notag\\
&=a\prec_{E} b+a\prec_{E} y+x\prec_{E} b+x\prec_{E} y\notag\\
&=(a+x)\prec_{E}(b+y)\notag\\
&=\varphi(a,x)\prec_{E}\varphi(b,y).\label{formulas:preserve multiplication2}
\end{align}
}
Since the map $\varphi$ is a linear isomorphism, there exists a linear inverse $\varphi^{-1}$, then we have
\begin{align*}
    \big((a,x)\underline{\star} (b,y)\big)\succeq (c,z)
    =&\varphi^{-1}\varphi\Big(\big((a,x)\underline{\star} (b,y)\big)\succeq (c,z)\Big)\\
    =&\varphi^{-1}\Big(\big((a+x)\star_{E} (b+y)\big)\succ_{E}(c+z)\Big)\quad\text{(by Eqs.~(\ref{formulas:preserve multiplication1}) and ~(\ref{formulas:preserve multiplication2}))}\\
    =&\varphi^{-1}\Big((a+x)\succ_{E} \big((b+y)\succ_{E} (c+z)\big)\Big)
    \quad\text{(by $E$ being a dendriform algebra.})\\
    =&\varphi^{-1}\varphi\Big((a,x)\succeq \big((b,y)\succeq (c,z)\big)\Big)
       \quad\text{(by Eqs.~(\ref{formulas:preserve multiplication1}))}\\
    =&(a,x)\succeq \big((b,y)\succeq (c,z)\big).
    \end{align*}
Analogously we can prove,
\begin{align*}
    \big((a,x)\preceq (b,y)\big)&\preceq (c,z)=(a,x)\preceq \big((b,y)\underline{\star} (c,z)\big);\\
    \big((a,x)\succeq (b,y)\big)&\preceq (c,z)=(a,x)\succeq \big((b,y)\preceq (c,z)\big).
    \end{align*}
Hence, $\dnv$ is a unified product of $D$ and $\Omega(D,V)$.
Moreover, the linear isomorphism $\varphi$ is an isomorphism of dendriform algebras by Eqs.~(\ref{formulas:preserve multiplication1}) and ~(\ref{formulas:preserve multiplication2}).

Finally, we prove that the map $\varphi$ stabilizes $D$ and co-stabilizes $V$ in Diagram ~(\ref{diagram:equivalent cohomologous ue}). Since $$\varphi\big(i^{\prime}(a)\big)=\varphi(a,0)=a=i(a),a\in D,$$
we obtain $\varphi$ stabilizes $D$. Moreover, we have
 $$\pi\big(\varphi(a,x)\big)=\pi(a+x)=x=\pi^{\prime}(a,x),a\in D,x\in V,$$
so $\varphi$ co-stabilizes $V$. This completes the proof.
\end{proof}

By the proof of Theorem~\ref{thm:extension to unified}, we give the following remark.
\begin{rem}\label{remark:retraction and V}
Let $D\subset E $ be an extension of dendriform algebras.
\begin{enumerate}
\item \label{remark:retraction and V1}There exists a one-to-one correspondence between the linear retraction $\rho:E\rightarrow D$ and $V$. In detail, when the linear retraction $\rho:E\rightarrow D$ is given, we immediately obtain $V:=\mathrm{ker}\,\rho$ is a space complement of $D$ in $E$. Conversely, when $V$ is a space complement of $D$ in $E$, define a linear retraction $\rho:E\rightarrow D$, $\rho(a+x)=a$, for all $a\in D,x\in V$, we call it {\bf the retraction associated to $V$}.

\item\label{rem:eu map} Fix a linear retraction $\rho:E\rightarrow D$ and $V:=\mathrm{ker}\,\rho$. Now we establish a map as follows:
\begin{align*}
\Upsilon_{1}:\mathbf{Exts}(E,D)&\rightarrow \mathcal{O}(D,V)\\
\cee&\mapsto \Omega(D,V),
\end{align*}
where the dendriform extending structure $\Omega(D,V)$ is defined by Eq.~(\ref{formulas:extension to unified}).
\end{enumerate}
\end{rem}
In the sequel, the dendriform extending structure $(\triangleright_{1}^{\prime},\triangleright_{2}^{\prime},\triangleleft_{1}^{\prime},\triangleleft_{2}^{\prime},\rightharpoonup_{1}^{\prime},\rightharpoonup_{2}^{\prime},\leftharpoonup_{1}^{\prime},\leftharpoonup_{2}^{\prime},f_{1}^{\prime},f_{2}^{\prime},\succ_{V}^{\prime},\prec_{V}^{\prime})$ will be denoted by $\Omega^{\prime}(D,V)$ for convenience.
\begin{prop}\label{prop:bijection of extension and unified product}
Let $D\subset E $ be an extension of dendriform algebras with a linear retraction $\rho:E\rightarrow D$ and $V:=\mathrm{ker}\,\rho$. Then the map $\Upsilon_{1}:\mathbf{Exts}(E,D)\rightarrow \mathcal{O}(D,V)$ defined by {\rm Remark ~\ref{remark:retraction and V} ~\ref{rem:eu map}} is a bijection.
\end{prop}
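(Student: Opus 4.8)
The strategy is to construct an explicit inverse to $\Upsilon_1$. Given a dendriform extending structure $\Omega(D,V)\in\mathcal{O}(D,V)$, the associated unified product $\dnv$ is a dendriform algebra on the vector space $D\times V$, and by Theorem~\ref{thm:extension to unified} (more precisely, by the construction in its proof) the map $\varphi:\dnv\to E$ of Eq.~\eqref{formulas:map eu} — which sends $(a,x)\mapsto a+x$ — is a linear isomorphism once $E$ is identified as $D\oplus V$ via the fixed retraction $\rho$. We can transport the dendriform operations $\succeq,\preceq$ along $\varphi$ to obtain operations $\succ_E,\prec_E$ on $E$, explicitly
\begin{align*}
u\succ_E v&:=\varphi\big(\varphi^{-1}(u)\succeq\varphi^{-1}(v)\big),\\
u\prec_E v&:=\varphi\big(\varphi^{-1}(u)\preceq\varphi^{-1}(v)\big),
\end{align*}
for $u,v\in E$, where $\varphi^{-1}(u)=(\rho(u),u-\rho(u))$. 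Since $\varphi$ is a linear isomorphism and $\dnv$ is a dendriform algebra, $(E,\succ_E,\prec_E)$ is automatically a dendriform algebra; and one checks from Eq.~\eqref{formulas:unified product} that for $a,b\in D$ we get $a\succ_E b=a\succ b$ and $a\prec_E b=a\prec b$, so $D$ is a subalgebra, i.e. $\cee\in\mathbf{Exts}(E,D)$. This defines a map $\Upsilon_2:\mathcal{O}(D,V)\to\mathbf{Exts}(E,D)$, $\Omega(D,V)\mapsto\cee$.

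\textbf{Key steps.} First I would verify that $\Upsilon_2$ is well-defined, as sketched above: the transported structure is dendriform because $\varphi$ is a linear iso intertwining the operations, and $D$ sits inside it as a subalgebra because restricting Eq.~\eqref{formulas:unified product} to $D\times\{0\}$ recovers the original operations on $D$. Second, I would show $\Upsilon_1\circ\Upsilon_2=\mathrm{Id}_{\mathcal{O}(D,V)}$: starting from $\Omega(D,V)$, forming $\cee$ as above, and then applying the formulas~\eqref{formulas:extension to unified} of Remark~\ref{remark:retraction and V}~\ref{rem:eu map}, one must recover the twelve bilinear maps of $\Omega(D,V)$. This is a direct computation: for instance $a\triangleright_1 x$ recomputed is $a\succ_E x-\rho(a\succ_E x)$; but $a\succ_E x=\varphi\big((a,0)\succeq(0,x)\big)=\varphi\big(a\leftharpoonup_1 x,\,a\triangleright_1 x\big)=(a\leftharpoonup_1 x)+(a\triangleright_1 x)$ with $a\leftharpoonup_1 x\in D$ and $a\triangleright_1 x\in V$, so $\rho(a\succ_E x)=a\leftharpoonup_1 x$ and the difference is exactly $a\triangleright_1 x$; the remaining eleven maps are handled identically. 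Third, I would show $\Upsilon_2\circ\Upsilon_1=\mathrm{Id}_{\mathbf{Exts}(E,D)}$: starting from $\cee\in\mathbf{Exts}(E,D)$, the extending structure $\Upsilon_1(\cee)=\Omega(D,V)$ is defined by Eq.~\eqref{formulas:extension to unified}, and then the transported structure on $E$ must coincide with the original $\succ_E,\prec_E$; but this is precisely the content of Eqs.~\eqref{formulas:preserve multiplication1} and~\eqref{formulas:preserve multiplication2} in the proof of Theorem~\ref{thm:extension to unified}, which show $\varphi$ intertwines $\succeq,\preceq$ with the given $\succ_E,\prec_E$, so transporting back gives the original operations.

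\textbf{Main obstacle.} None of the three steps is genuinely hard; the work is entirely bookkeeping, checking that the decompositions $E=D\oplus V$ and the formulas for $\varphi$, $\varphi^{-1}$, and $\rho$ fit together consistently. The one point requiring a little care is the well-definedness of $\Upsilon_2$ — specifically, confirming that the operations obtained by transport along $\varphi$ genuinely restrict on $D$ to the original dendriform structure of $D$ (rather than to some twisted version), since this is what makes the output an element of $\mathbf{Exts}(E,D)$ rather than merely a dendriform structure on $E$ abstractly containing a copy of $D$. This follows by evaluating Eq.~\eqref{formulas:unified product} on pairs $(a,0),(b,0)$ with $a,b\in D$, where all the mixed terms ($\leftharpoonup_i$, $\rightharpoonup_i$, $f_i$, $\triangleright_i$, $\triangleleft_i$, $\succ_V$, $\prec_V$) vanish because their arguments include a zero $V$-component, leaving $(a,0)\succeq(b,0)=(a\succ b,0)$ and $(a,0)\preceq(b,0)=(a\prec b,0)$, hence $a\succ_E b=a\succ b$ and $a\prec_E b=a\prec b$ as required. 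With well-definedness secured, the two composition identities are immediate from the explicit formulas, completing the proof that $\Upsilon_1$ is a bijection.
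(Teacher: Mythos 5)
Your proposal is correct and follows essentially the same route as the paper: you construct the explicit inverse $\Upsilon_{2}$ (your transport of the unified-product structure along $\varphi$, $(a,x)\mapsto a+x$, is precisely the structure of Eq.~\eqref{formulas:unified to extension}), verify well-definedness by checking that $D$ sits inside as a subalgebra, and confirm both composites are the identity by direct substitution. The only cosmetic difference is that the paper justifies well-definedness by saying "similar to Theorem~\ref{thm:datum and unified product}," whereas you argue by transport of structure along the linear isomorphism $\varphi$; these amount to the same verification.
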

\begin{proof}
First, we define a map as follows: 
\begin{align*}
\Upsilon_{2}:\mathcal{O}(D,V)&\rightarrow \mathbf{Exts}(E,D)\\
\Omega(D,V)&\mapsto \cee,
\end{align*}
where the maps $\succ_{E}$ and $\prec_{E}$ are defined as follows with $a,b\in D$ and $x,y\in V$,
\begin{align}\label{formulas:unified to extension}
\begin{split}
(a+x)\succ_{E}(b+y)&:=a\succ b+a\leftharpoonup_{1} y+x\rightharpoonup_{1} b+f_{1}(x,y)+a\triangleright_{1} y+x\triangleleft_{1} b+x\succ_{V} y;\\
(a+x)\prec_{E}(b+y)&:=a\prec b+a\leftharpoonup_{2} y+x\rightharpoonup_{2} b+f_{2}(x,y)+a\triangleright_{2} y+x\triangleleft_{2} b+x\prec_{V} y.
\end{split}
\end{align}
Similar to Theorem ~\ref{thm:datum and unified product}, we can prove that the triple $\cee$ is an extension of $D$, hence the map $\Upsilon_{2}$ is well defined.

Second, we consider the composition map $\Upsilon_{2}\circ\Upsilon_{1}$,
\begin{align*}
\Upsilon_{2}\circ\Upsilon_{1}:\mathbf{Exts}(E,D)\rightarrow\mathcal{O}(D,V)&\rightarrow \mathbf{Exts}(E,D)\\
\cee\mapsto \Omega(D,V)&\mapsto \ceep.
\end{align*}
By substituting Eq. ~(\ref{formulas:extension to unified}) to Eq. ~(\ref{formulas:unified to extension}), we immediately obtain $\succ_{E}^{\prime}=\succ_{E},\prec_{E}^{\prime}=\prec_{E}$, then $\Upsilon_{2}\circ\Upsilon_{1}=\mathrm{Id}_{\mathbf{Exts}(E,D)}$.

Finally, we consider the composition map $\Upsilon_{1}\circ\Upsilon_{2}$,
\begin{align*}
\Upsilon_{1}\circ\Upsilon_{2}:\mathcal{O}(D,V)&\rightarrow \mathbf{Exts}(E,D)\rightarrow \mathcal{O}(D,V)\\
\Omega(D,V)&\mapsto \cee\mapsto \Omega^{\prime}(D,V).
\end{align*}
By substituting Eq. ~(\ref{formulas:unified to extension}) to Eq. ~(\ref{formulas:extension to unified}), we obtain $\Omega^{\prime}(D,V)=\Omega(D,V)$, then $\Upsilon_{1}\circ\Upsilon_{2}=\mathrm{Id}_{\mathcal{O}(D,V)}$.
\end{proof}


\subsection{The classification of dendriform extending structures}
In this subsection, we define two relations $\equiv$ and $\approx$ on the set of dendriform extending structures $\mathcal{O}(D,V)$ and obtain the main result of classifying the dendriform extending structures.

\begin{defn}\label{defn:datum equivalent}
Let $D$ be a dendriform algebra, and $V$ a vector space. Two dendriform extending structures $\Omega(D,V)$ and $\Omega^{\prime}(D,V)$ are called {\bf equivalent}, and we denote it by $\Omega(D,V)\equiv\Omega^{\prime}(D,V)$, if there exists a pair of linear maps $(g,h)$, where $g:V\rightarrow D$ and $h\in \mathrm{Aut}_{{\bf k}}(V)$, satisfying the following conditions with $a\in D$ and $x,y\in V$,
\allowdisplaybreaks{
\begin{align*}
        (E1)\,\,&\text{The map $h$ is a $D$ bimodule morphism, i.e., }\\
                    &\begin{aligned}
                    \begin{split}
                    &(E1.1)\,\,h(a\triangleright_{1} x)=a\triangleright^{\prime}_{1} h(x);\\
                    &(E1.2)\,\,h(x\triangleleft_{1} a)=h(x)\triangleleft^{\prime}_{1} a;\\
                    \end{split}
                    \begin{split}
                    h(a\triangleright_{2} x)=a\triangleright^{\prime}_{2} h(x);\\
                    h(x\triangleleft_{2} a)=h(x)\triangleleft^{\prime}_{2} a.\\
                    \end{split}
                    \end{aligned}\\
        (E2)\,\,&x\rightharpoonup_{1} a+g(x\triangleleft_{1} a)=g(x)\succ a+h(x)\rightharpoonup^{\prime}_{1} a ;\\
        &x\rightharpoonup_{2} a+g(x\triangleleft_{2} a)=g(x)\prec\,a+h(x)\rightharpoonup^{\prime}_{2} a ;\\
        (E3)\,\,&a\leftharpoonup_{1} x+g(a\triangleright_{1} x)=a\succ g(x)+a\leftharpoonup^{\prime}_{1}h(x);\\
        &a\leftharpoonup_{2} x+g(a\triangleright_{2} x)=a\prec\,g(x)+a\leftharpoonup^{\prime}_{2}h(x);\\
        (E4)\,\,&f_{1}(x,y)+g(x\succ_{V} y)=g(x)\succ g(y)+g(x)\leftharpoonup^{\prime}_{1} h(y)+h(x)\rightharpoonup^{\prime}_{1} g(y)+f^{\prime}_{1}\big(h(x),h(y)\big);\\
        &f_{2}(x,y)+g(x\prec_{V} y)=g(x)\prec\,g(y)+g(x)\leftharpoonup^{\prime}_{2} h(y)+h(x)\rightharpoonup^{\prime}_{2} g(y)+f^{\prime}_{2}\big(h(x),h(y)\big);\\
        (E5)\,\,&h(x\succ_{V} y)=g(x)\triangleright^{\prime}_{1} h(y)+h(x)\triangleleft^{\prime}_{1} g(y)+h(x)\succ_{V}^{\prime} h(y).\\
        &h(x\prec_{V} y)=g(x)\triangleright^{\prime}_{2} h(y)+h(x)\triangleleft^{\prime}_{2} g(y)+h(x)\prec_{V}^{\prime} h(y).
\end{align*}
}
Moreover, two equivalent dendriform extending structures are called {\bf cohomologous} if $h=\mathrm{Id}_{V}$, and denoted by $\Omega(D,V)\approx\Omega^{\prime}(D,V)$.
\end{defn}

Let $D$ be a dendriform algebra, and $V$ a vector space. Suppose that two dendriform extending structures $\Omega(D,V),\Omega^{\prime}(D,V)\in \mathcal{O}(D,V)$, and $\dnv$, $\dnvp$ are corresponding unified products. Two rows in the following Diagram ~(\ref{diagram:equivalent cohomologous uu}) are short exact sequences in the category $\mathrm{Vect}_{\bf k}$, where $i^{\prime}$ and $\pi^{\prime}$ are defined in Diagram ~(\ref{diagram:equivalent cohomologous ue}). Similar to Definition~\ref{defn:equivalent cohomolous}, we define the relation $\equiv$ and $\approx$ between these two unified products $\dnv$ and $\dnvp$ in the following Diagram ~(\ref{diagram:equivalent cohomologous uu}), and denoted by $\dnv\equiv \dnvp$ and $\dnv\approx \dnvp$ respectively.
\begin{align}\label{diagram:equivalent cohomologous uu}
\begin{split}
\xymatrix{
  0 \ar[r]^{} & D \ar[d]_{\mathrm {Id}} \ar[r]^{i^{\prime}\,\,\,\,\,} & \dnv \ar[d]_{\phi} \ar[r]^{\,\,\,\,\,\pi^{\prime}} & V \ar[d]_{\mathrm {Id}} \ar[r]^{} & 0  \\
  0 \ar[r]^{} & D \ar[r]^{i^{\prime}\,\,\,\,\,} & \dnvp \ar[r]^{\,\,\,\,\,\pi^{\prime}} & V \ar[r]^{} & 0   }
\end{split}
\end{align}
\begin{lem}\label{lem:datum equivalent cohomologous}
Let $D$ be a dendriform algebra, and $V$ a vector space. Suppose that $\Omega(D,V)$ and $\Omega^{\prime}(D,V)$ are two dendriform extending structures of $D$ through $V$, and $\dnv$, $\dnvp$ are the corresponding unified products. We denote by $\mathcal{M}$ the set of all morphisms of dendriform algebras $\phi:\dnv\rightarrow \dnvp$ which stabilizes $D$ in {\rm Diagram ~(\ref{diagram:equivalent cohomologous uu})} and denote by $\mathcal{N}$ the set of all pairs $(g,h)$, where the linear maps $g:V\rightarrow D$ and $h:V\rightarrow V$ satisfies the conditions {\rm (E1)-(E5)} in {\rm Definition ~\ref{defn:datum equivalent}}. Then
\begin{enumerate}
 \item \label{It:bijection} there exists a bijection
\begin{align*}
\Upsilon_3:\,\,\,\mathcal{N}\,\,\,&\rightarrow \mathcal{M}\\
(g,h)&\mapsto \phi,
\end{align*}
where $\phi(a,x)=\big(a+g(x),h(x)\big)$, for all $a \in D,x \in V$.
\item \label{It:isomorphism} under the bijection $\Upsilon_3$, the map $\phi$ is bijective if and only if the map $h$ is bijective and the map $\phi$ co-stabilizes $V$ if and only if $h=\mathrm{Id}_{V}$.
    \end{enumerate}
\end{lem}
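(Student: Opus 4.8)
The plan is to verify directly that the assignment $(g,h)\mapsto\phi$ with $\phi(a,x)=\bigl(a+g(x),h(x)\bigr)$ sets up the claimed bijection $\Upsilon_3$, and then read off part~\eqref{It:isomorphism} from the explicit formula.

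\textbf{Step 1: $\phi$ stabilizes $D$ and is compatible with $\pi'$, for any pair $(g,h)$.} For any linear $g:V\to D$ and $h:V\to V$, the map $\phi$ defined above is linear, and $\phi(i'(a))=\phi(a,0)=(a,0)=i'(a)$, so the left square of Diagram~\eqref{diagram:equivalent cohomologous uu} commutes automatically. Also $\pi'(\phi(a,x))=\pi'(a+g(x),h(x))=h(x)$, so $\phi$ co-stabilizes $V$ (i.e.\ the right square commutes) if and only if $h=\mathrm{Id}_V$; this already gives the second half of~\eqref{It:isomorphism}. Conversely, any linear map $\phi:\dnv\to\dnvp$ that stabilizes $D$ must send $(a,0)\mapsto(a,0)$, hence has the form $\phi(a,x)=\bigl(a+g(x),h(x)\bigr)$ for uniquely determined linear maps $g:V\to D$ and $h:V\to V$ (write $\phi(0,x)=(g(x),h(x))$ and use linearity plus $\phi(a,0)=(a,0)$). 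So the only content left is to match ``$\phi$ is a morphism of dendriform algebras'' with the system (E1)--(E5).

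\textbf{Step 2: translate the morphism condition into (E1)--(E5).} The map $\phi$ is a dendriform morphism iff $\phi\bigl((a,x)\succeq(b,y)\bigr)=\phi(a,x)\succeq'\phi(b,y)$ and the same with $\preceq$, for all $a,b\in D$, $x,y\in V$. Since $\dnv$ is the direct sum $D\times V$ as a vector space, by bilinearity it suffices to check these on the generating set $\{(a,0)\}\cup\{(0,x)\}$, i.e.\ on the four types of pairs $\bigl((a,0),(b,0)\bigr)$, $\bigl((a,0),(0,y)\bigr)$, $\bigl((0,x),(b,0)\bigr)$, $\bigl((0,x),(0,y)\bigr)$. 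The pair $\bigl((a,0),(b,0)\bigr)$ gives $\phi(a\succ b,0)=(a\succ b,0)$, which holds trivially (and likewise for $\prec$); so $D$ being a subalgebra imposes nothing. Expanding the $\succeq$-identity on $\bigl((a,0),(0,y)\bigr)$ using Eq.~\eqref{formulas:unified product} yields, in the $D$-component, $a\leftharpoonup_1 y+g(a\triangleright_1 y)=a\succ g(y)+a\leftharpoonup'_1 h(y)$, which is the first line of (E3), and in the $V$-component $h(a\triangleright_1 y)=a\triangleright'_1 h(y)$, which is (E1.1); the $\preceq$-identity gives the second lines of (E3) and (E1). The pair $\bigl((0,x),(b,0)\bigr)$ similarly produces (E2) (from the $D$-components) and (E1.2) (from the $V$-components). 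Finally $\bigl((0,x),(0,y)\bigr)$ produces (E4) (the $D$-components) and (E5) (the $V$-components). Thus $\phi\in\mathcal M$ if and only if $(g,h)$ satisfies (E1)--(E5), i.e.\ $(g,h)\in\mathcal N$; this gives the bijection $\Upsilon_3$ in~\eqref{It:bijection}, with inverse $\phi\mapsto(g,h)$ read off as above.

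\textbf{Step 3: bijectivity of $\phi$ versus bijectivity of $h$.} With respect to the decomposition $D\times V$, the map $\phi$ is ``block upper-triangular'' with identity on the $D$-block: $\phi(a,x)=(a,0)+(g(x),h(x))$. If $h$ is bijective then $(a,x)\mapsto(a-g(h^{-1}(x)),h^{-1}(x))$ is a two-sided inverse, so $\phi$ is bijective; conversely if $\phi$ is bijective then $\pi'\circ\phi|_{0\times V}=h$ must be surjective, and if $h(x)=0$ then $\phi(0,x)=(g(x),0)=\phi(g(x)\text{-component only})$ forces, by injectivity, $x=0$ after noting $\phi(-g(x),x)=(0,0)$; hence $h$ is injective. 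Combined with the $h=\mathrm{Id}_V$ observation from Step~1, this completes~\eqref{It:isomorphism}.

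The routine but lengthy part is the bookkeeping in Step~2: carefully expanding both products via Eq.~\eqref{formulas:unified product} and separating $D$- and $V$-components for each of the four generator pairs, which is exactly parallel to the case analysis in the proof of Theorem~\ref{thm:datum and unified product}. The only genuine subtlety — and the step I would be most careful about — is confirming that checking the morphism property on generators really does suffice; this uses that $\succeq$ and $\preceq$ are bilinear and that $\phi$ is linear, so both sides of each identity are bilinear in the two arguments and agreement on a spanning set propagates to all of $D\times V$.
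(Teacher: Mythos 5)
Your proposal is correct and follows essentially the same route as the paper: stabilization of $D$ forces $\phi(a,x)=\bigl(a+g(x),h(x)\bigr)$, the morphism condition is checked on the generating pairs and matched line by line with (E1)--(E5), and bijectivity of $\phi$ versus $h$ and the co-stabilization criterion are read off exactly as in the paper's Item (2). The only cosmetic difference is that you obtain surjectivity of $\Upsilon_3$ by directly characterizing the maps in $\mathcal{M}$, whereas the paper packages the same computation as an explicit inverse $\Upsilon_4$ with $g=\pi_D\circ\phi\circ i_V$, $h=\pi'\circ\phi\circ i_V$.
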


\allowdisplaybreaks{
\begin{proof}
\begin{enumerate}
\item[(1)] The proof of Item~\ref{It:bijection}.

First, we need to prove the map $\Upsilon_{3}$ is well defined. Suppose that the pair $(g,h)\in \mathcal{N}$, and $\phi=\Upsilon_{3}(g,h)$. In Diagram ~(\ref{diagram:equivalent cohomologous uu}), we have
$$\phi\big(i^{\prime}(a)\big)=\phi(a,0)=\big(a+g(0),h(0)\big)=(a,0)
=i^{\prime}(a),\,\text{for}\, a\in D.$$
Hence $\phi$ stabilizes $D$. Moreover, the map $\phi$ is a morphism of dendriform algebras if and only if the following Eq. ~(\ref{zformula(not):algebra morphism}) holds for all $a,b\in D$ and $x,y\in V$,
\begin{align}\label{zformula(not):algebra morphism}
\begin{split}
   \phi\big((a,x)\succeq (b,y)\big)&=\phi(a,x)\succeq^{\prime} \phi(b,y);\\
      \phi\big((a,x)\preceq (b,y)\big)&=\phi(a,x)\preceq^{\prime} \phi(b,y).
      \end{split}
\end{align}
In fact, Eq. ~(\ref{zformula(not):algebra morphism}) holds if and only if it holds for the generating set $\{(a,0)|a\in D\}\cup \{(0,x)|x\in V\}$ of $\dnv$. By Eq. ~(\ref{formulas:unified product}), we have
 \begin{align*}
0=&\phi\big((a,0)\succeq (0,x)\big)-\phi(a,0)\succeq^{\prime} \phi(0,x)\\
    =&\phi(a\leftharpoonup_{1} x,a\triangleright_{1} x)-(a,0)\succeq^{\prime}\big(g(x),h(x)\big)\\
    =&\big(a\leftharpoonup_{1} x+g(a\triangleright_{1} x),h(a\triangleright_{1} x)\big)-\big(a\succ g(x)+a\leftharpoonup^{\prime}_{1}h(x),a\triangleright^{\prime}_{1} h(x)\big);\\
0=&\phi\big((a,0)\preceq (0,x)\big)-\phi(a,0)\preceq^{\prime} \phi(0,x)\\
    =&\phi(a\leftharpoonup_{2} x,a\triangleright_{2} x)-(a,0)\preceq^{\prime}\big(g(x),h(x)\big)\\
    =&\big(a\leftharpoonup_{2} x+g(a\triangleright_{2} x),h(a\triangleright_{2} x)\big)-\big(a\prec g(x)+a\leftharpoonup^{\prime}_{2}h(x),a\triangleright^{\prime}_{2} h(x)\big).
    \end{align*}
Hence, the relations (E3) and (E1.1) hold if and only if Eq. ~(\ref{zformula(not):algebra morphism}) holds for the pair $(a,0)$, $(0,x)$ with $a\in D$, $x\in V$. Similarly, the relations (E2) and (E1.2) hold if and only if Eq. ~(\ref{zformula(not):algebra morphism}) holds for the pair $(0,x)$, $(a,0)$ with $a\in D$, $x\in V$. The relations (E4) and (E5) hold if and only if Eq. ~(\ref{zformula(not):algebra morphism}) holds for the pair $(0,x)$, $(0,y)$ with $x,y\in V$. Hence, $\phi=\Upsilon_{3}(g,h)$ is a morphism of dendriform algebras if and only if (E1)-(E5) hold.
So the pair $(g,h)\in \mathcal{N}$ if and only if the map $\phi=\Upsilon_{3}(g,h)\in \mathcal{M}$. Hence the map $\Upsilon_{3}$ is well defined.

Second, we prove that $\Upsilon_{3}$ is bijective. In Diagram ~(\ref{diagram:equivalent cohomologous uu}), define the canonical projection $\pi_{D}:\dnv\,(\dnvp)\rightarrow D$ and the injection $i_{V}:V\rightarrow \dnv\,(\dnvp)$ as follows:
$$\pi_{D}(a,x)=a,\quad i_{V}(x)=(0,x),\,\, a\in D,\,x\in V.$$
Then we immediately obtain $\pi_{D}\,i^{\prime}=\mathrm{Id}_{D}$, $\pi^{\prime}\,i_{V}=\mathrm{Id}_{V}$ and $i^{\prime}\,\pi_{D}+i_{V}\,\pi^{\prime}=\mathrm{Id}_{\dnv\,(\dnvp)}$.

In order to prove that $\Upsilon_{3}$ is bijective, now we define a map $\Upsilon_{4}$ as follows:
\begin{align*}
\Upsilon_{4}:\mathcal{M}&\rightarrow \mathcal{N}\\
\phi&\mapsto (g,h),
\end{align*}
where $g(x)=\pi_{D}\big(\phi(0,x)\big)$ and $h(x)=\pi^{\prime}\big(\phi(0,x)\big)$, $x \in V$.
Also, we need to prove that the map $\Upsilon_{4}$ is well defined, i.e., the pair $(g,h)=\Upsilon_{4}(\phi)\in \mathcal{N}, \phi\in \mathcal{M}$. To avoid the tedious computation, suppose that
\begin{align*}
\Upsilon_{3}\circ\Upsilon_{4}:\mathcal{M}&\rightarrow \,\,\,\mathcal{N}\,\,\,\rightarrow \mathcal{M}\\
\phi&\mapsto (g,h)\mapsto \phi^{\prime}.
\end{align*}
By the proof of Item~\ref{It:bijection}, we only need to prove that $\phi^{\prime}\in \mathcal{M}$. In fact, we have
\begin{align*}
\phi^{\prime}(a,x)
=&\Upsilon_{3}(g,h)(a,x)\\
=&(a+g(x),h(x))\\
=&(a+\pi_{D}\big(\phi(0,x)\big),\pi^{\prime}\big(\phi(0,x)\big))\\
=&(a,0)+(\pi_{D}\big(\phi(0,x)\big),0)+(0,\pi^{\prime}\big(\phi(0,x)\big))\\
=&\phi(a,0)+i^{\prime}\pi_{D}\big(\phi(0,x)\big)+i_{V}\pi^{\prime}\big(\phi(0,x)\big)\\
=&\phi(a,0)+(i^{\prime}\pi_{D}+i_{V}\pi^{\prime})\big(\phi(0,x)\big)\\
=&\phi(a,0)+\phi(0,x)\\
=&\phi(a,x).
\end{align*}
Hence $\phi^{\prime}=\phi\in \mathcal{M}$ and $\Upsilon_{3}\circ\Upsilon_{4}=\mathrm{Id}_{\mathcal{M}}$.
Conversely, suppose that
\begin{align*}
\Upsilon_{4}\circ\Upsilon_{3}:\,\,\,\mathcal{N}\,\,\,&\rightarrow \mathcal{M}\rightarrow \,\,\,\mathcal{N}\\
(g,h)&\mapsto \phi\mapsto (g^{\prime},h^{\prime})
\end{align*}
for all $x\in V$, we have
\begin{align*}
g^{\prime}(x)=&\pi_{D}\big(\phi(0,x)\big)=\pi_{D}(g(x),\,h(x))=g(x);\\ h^{\prime}(x)=&\pi^{\prime}\big(\phi(0,x)\big)=\pi^{\prime}(g(x),\,h(x))=h(x).
\end{align*}
Hence $(g^{\prime},h^{\prime})=(g,h)$, i.e., $\Upsilon_{4}\circ\Upsilon_{3}=\mathrm{Id}_{\mathcal{N}}$.

\item[(2)] 
We divide the proof of Item ~\ref{It:isomorphism} into two steps.
 \begin{enumerate}
 \item Under the bijection of Item~\ref{It:bijection}, we prove that the map $\phi$ is bijective if and only if the map $h$ is bijective. First, suppose that the linear map $\phi$ is bijective, it's easy to find that
$h$ is surjective, because the map $\phi$ can't be surjective if $h$ is not surjective.  Meanwhile, we prove that $h$ is injective by contradiction. Assume that $h$ is not injective, then there exists $0\neq x\in V$, such that $h(x)=0$, so $\phi(-g(x),x)=(-g(x)+g(x),h(x))=(0,0)$, which contradicts with the injectivity of $\phi$.
Second, suppose that the map $h$ is bijective. For any element $(a,x)\in D\times V$, we have $\phi\big(a-g\big(h^{-1}(x)\big),h^{-1}(x)\big)=(a,x)$, so $\phi$ is surjective. Moreover, suppose that $\phi(a,x)=\big(a+g(x),h(x)\big)=0$, we have,
\begin{align*}
   \begin{cases}
    a+g(x)=0\\
    h(x)=0
    \end{cases}
\end{align*}
Hence, $a=0$ and $x=0$, i.e., the map $\phi$ is injective.

\item Under the bijection of Item~\ref{It:bijection}, we prove that $\phi$ co-stabilizes $V$ in Diagram ~(\ref{diagram:equivalent cohomologous uu}) if and only if $h=\mathrm{Id}_{V}$. In fact, for all $a\in D$ and $x\in V$, we have
\begin{align*}
    \pi^{\prime}\big(\phi(a,x)\big)=&\pi^{\prime}\big(a+g(x),h(x)\big)=h(x);\\
    \pi^{\prime}(a,x)=&x.
\end{align*}
Hence, $\phi$ co-stabilizes $V$ if and only if $ \pi^{\prime}\phi=\pi^{\prime}$, i.e., $h(x)=x$, for all $x\in V$, so we have $h=\mathrm{Id}_{V}$. This complete the proof.
\end{enumerate}
\end{enumerate}
\end{proof}
}

According to Definition ~\ref{defn:datum equivalent} and Lemma ~\ref{lem:datum equivalent cohomologous}, we have the following proposition.
\begin{prop}\label{prop:datum equivalent cohomologous}
Let $D$ be a dendriform algebra, and $V$ a vector space. Suppose that $\Omega(D,V)$ and $\Omega^{\prime}(D,V)$ are two dendriform extending structures of $D$ through $V$, and $\dnv$, $\dnvp$ are corresponding unified products. Then $\Omega(D,V)\equiv \Omega^{\prime}(D,V)$ (resp. $\Omega(D,V)\approx \Omega^{\prime}(D,V)$) if and only if $\dnv\equiv \dnvp$ (resp. $\dnv\approx \dnvp$) in {\rm Diagram ~(\ref{diagram:equivalent cohomologous uu})}.

\end{prop}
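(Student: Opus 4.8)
The plan is to deduce Proposition~\ref{prop:datum equivalent cohomologous} from Lemma~\ref{lem:datum equivalent cohomologous} essentially by unwinding definitions. Recall that by Definition~\ref{defn:datum equivalent} the relation $\Omega(D,V)\equiv \Omega^{\prime}(D,V)$ means exactly that there exists a pair $(g,h)$ with $g:V\to D$ linear and $h\in\mathrm{Aut}_{\bf k}(V)$ satisfying the conditions (E1)--(E5); in the notation of Lemma~\ref{lem:datum equivalent cohomologous} this says precisely that the set $\mathcal{N}$ contains a pair $(g,h)$ with $h$ bijective. On the other hand, $\dnv\equiv \dnvp$ in Diagram~(\ref{diagram:equivalent cohomologous uu}) means that there is an isomorphism of dendriform algebras $\phi:\dnv\to\dnvp$ which stabilizes $D$; that is, $\mathcal{M}$ contains a bijective $\phi$.

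First I would treat the $\equiv$ case. Suppose $\Omega(D,V)\equiv\Omega^{\prime}(D,V)$, so there is $(g,h)\in\mathcal{N}$ with $h$ bijective. Applying the bijection $\Upsilon_3$ of Lemma~\ref{lem:datum equivalent cohomologous}~\ref{It:bijection} produces $\phi=\Upsilon_3(g,h)\in\mathcal{M}$, i.e.\ a dendriform-algebra morphism stabilizing $D$; by Lemma~\ref{lem:datum equivalent cohomologous}~\ref{It:isomorphism}, since $h$ is bijective, $\phi$ is bijective, hence an isomorphism, so $\dnv\equiv\dnvp$. Conversely, if $\dnv\equiv\dnvp$, pick a bijective $\phi\in\mathcal{M}$; then $(g,h)=\Upsilon_4(\phi)=\Upsilon_3^{-1}(\phi)\in\mathcal{N}$, and by Lemma~\ref{lem:datum equivalent cohomologous}~\ref{It:isomorphism} again, bijectivity of $\phi$ forces $h$ to be bijective, so $(g,h)$ witnesses $\Omega(D,V)\equiv\Omega^{\prime}(D,V)$. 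This establishes the first equivalence.

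For the $\approx$ case the argument is the same but with the extra bookkeeping that $h=\mathrm{Id}_V$. If $\Omega(D,V)\approx\Omega^{\prime}(D,V)$ there is $(g,\mathrm{Id}_V)\in\mathcal{N}$; then $\phi=\Upsilon_3(g,\mathrm{Id}_V)$ is bijective (since $\mathrm{Id}_V$ is) and, by Lemma~\ref{lem:datum equivalent cohomologous}~\ref{It:isomorphism}, co-stabilizes $V$ because $h=\mathrm{Id}_V$; hence $\dnv\approx\dnvp$. Conversely, a $\phi$ realizing $\dnv\approx\dnvp$ is bijective, stabilizes $D$ and co-stabilizes $V$, so $(g,h)=\Upsilon_4(\phi)\in\mathcal{N}$ has $h$ bijective and, by the ``co-stabilizes $V$ iff $h=\mathrm{Id}_V$'' half of Lemma~\ref{lem:datum equivalent cohomologous}~\ref{It:isomorphism}, $h=\mathrm{Id}_V$; thus $(g,h)=(g,\mathrm{Id}_V)$ gives $\Omega(D,V)\approx\Omega^{\prime}(D,V)$.

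There is essentially no hard step: all the real work (translating the cocycle/action identities into the statement that $\phi(a,x)=(a+g(x),h(x))$ is an algebra morphism, and matching bijectivity/co-stabilization with properties of $h$) is already packaged in Lemma~\ref{lem:datum equivalent cohomologous}. The only point requiring a little care is making sure the two directions of each equivalence use the correct half of part~\ref{It:isomorphism} of the lemma and that one quotes $\Upsilon_4=\Upsilon_3^{-1}$ rather than re-deriving it; that and checking that ``isomorphism of dendriform algebras stabilizing $D$'' unravels to ``bijective element of $\mathcal{M}$'' is the whole content of the proof.
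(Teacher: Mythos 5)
Your argument is correct and follows exactly the route the paper intends: the paper states this proposition as an immediate consequence of Definition~\ref{defn:datum equivalent} and Lemma~\ref{lem:datum equivalent cohomologous}, and your proposal simply spells out that deduction via the bijection $\Upsilon_3$ (with inverse $\Upsilon_4$) together with the two criteria in part~\ref{It:isomorphism} of the lemma ($\phi$ bijective iff $h$ bijective, $\phi$ co-stabilizes $V$ iff $h=\mathrm{Id}_V$). No gaps; your write-up is essentially the proof the paper leaves implicit.
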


Now we arrive at our main result of classifying the dendriform structures.
\allowdisplaybreaks{
\begin{thm}\label{thm:classification of extension}
Let $D$ be a dendriform algebra. Suppose that $E$ is a vector space containing $D$ as a subspace, and $V$ is a space complement of $D$ in $E$. Then
\begin{enumerate}
\item \label{thm:classification of extension1}the relation $\equiv$ is an equivalence relation on the set $\mathcal{O}(D,V)$. Moreover, suppose that $H^{2}(V,D):=\mathcal{O}(D,V)/\equiv$, then there exists a bijection
\begin{align*}
 \Upsilon:\mathbf{Extd}(E,D)&\rightarrow H^{2}(V,D);\\
 [\cee]&\mapsto [\Omega(D,V)],
 \end{align*}
where the dendriform extending structure $\Omega(D,V)$ is obtained by {\rm Eq. ~(\ref{formulas:extension to unified})}, and $[\Omega(D$, $V)]$ (resp. $[\cee]$) denote the equivalence class of $\Omega(D,V)$ (resp. $\cee$) via $\equiv$.
 \item \label{thm:classification of extension2}the relation $\approx$ is an equivalence relation on the set $\mathcal{O}(D,V)$. Moreover, suppose that $H_{V}^{2}(V,D):=\mathcal{O}(D,V)/\approx$, then there exists a bijection
\begin{align*}
 \Upsilon^{\prime}:\mathbf{Extd^{\prime}}(E,D)&\rightarrow H_{V}^{2}(V,D);\\
 \overline{\cee}&\mapsto \overline{\Omega(D,V)},
\end{align*}
where the dendriform extending structure $\Omega(D,V)$ is obtained by {\rm Eq. ~(\ref{formulas:extension to unified})}, and $\overline{\Omega(D,V)}$ (resp. $\overline{\cee}$) denote the equivalence class of $\Omega(D,V)$ (resp. $\cee$) via $\approx$.
\end{enumerate}
\end{thm}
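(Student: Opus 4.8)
The plan is to deduce Theorem~\ref{thm:classification of extension} by assembling the three bijections already established in the excerpt, namely Proposition~\ref{prop:bijection of extension and unified product} ($\Upsilon_1$), Theorem~\ref{thm:extension to unified}, and Proposition~\ref{prop:datum equivalent cohomologous} (together with Lemma~\ref{lem:datum equivalent cohomologous}), and then checking that these bijections are compatible with the equivalence relations $\equiv$ and $\approx$ so that they descend to the quotients. First I would fix a linear retraction $\rho:E\rightarrow D$ with $V:=\mathrm{ker}\,\rho$, as in Remark~\ref{remark:retraction and V}, so that $\Upsilon_1:\mathbf{Exts}(E,D)\rightarrow\mathcal{O}(D,V)$ is a well-defined bijection whose inverse $\Upsilon_2$ is given by Eq.~\eqref{formulas:unified to extension}. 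The key observation is that $\Upsilon_1$ and $\Upsilon_2$ are not merely bijections of sets but intertwine the two ``rows'' of Diagrams~\eqref{diagram:equivalent cohomologous ee}, \eqref{diagram:equivalent cohomologous ue}, and \eqref{diagram:equivalent cohomologous uu}: by Theorem~\ref{thm:extension to unified}, for any extension $\cee$, the unified product $D\natural_{\Upsilon_1(\cee)}V$ is cohomologous (hence equivalent) to $\cee$ via the canonical map $\varphi$ of Eq.~\eqref{formulas:map eu}.

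Next I would prove that $\equiv$ and $\approx$ are equivalence relations on $\mathcal{O}(D,V)$. Reflexivity comes from the pair $(g,h)=(0,\mathrm{Id}_V)$, which trivially satisfies (E1)--(E5). For symmetry and transitivity the cleanest route is to transport the question through Lemma~\ref{lem:datum equivalent cohomologous}: by Proposition~\ref{prop:datum equivalent cohomologous}, $\Omega(D,V)\equiv\Omega^{\prime}(D,V)$ iff there is an isomorphism of dendriform algebras $\phi:D\natural_{\Omega(D,V)}V\rightarrow D\natural_{\Omega^{\prime}(D,V)}V$ stabilizing $D$ (and co-stabilizing $V$ in the $\approx$ case), and composition/inversion of such $\phi$'s is visibly again of that type — note that the inverse of an isomorphism stabilizing $D$ again stabilizes $D$, and by Lemma~\ref{lem:datum equivalent cohomologous}~\ref{It:isomorphism} bijectivity of $\phi$ corresponds to bijectivity of $h$, so $h\in\mathrm{Aut}_{\mathbf k}(V)$ is preserved under inversion. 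Transitivity is the analogous statement for composites. Hence $\equiv$ and $\approx$ are equivalence relations on $\mathcal{O}(D,V)$, which also shows (via $\Upsilon_1$, once compatibility is established) that the corresponding relations on $\mathbf{Exts}(E,D)$ from Definition~\ref{defn:equivalent cohomolous} are equivalence relations, matching Remark~\ref{rem:extd and extdp}.

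Then I would verify the compatibility that lets $\Upsilon_1$ descend. Concretely: for $\cee,\ceep\in\mathbf{Exts}(E,D)$, I claim $\cee\equiv\ceep$ iff $\Upsilon_1(\cee)\equiv\Upsilon_1(\ceep)$ (and likewise for $\approx$). For the forward direction, an equivalence $\psi:\cee\rightarrow\ceep$ stabilizing $D$ gives, by pre- and post-composing with the canonical isomorphisms $\varphi,\varphi^{\prime}$ of Eq.~\eqref{formulas:map eu} attached to the two unified products, an isomorphism $\phi:=(\varphi^{\prime})^{-1}\circ\psi\circ\varphi$ of the unified products that stabilizes $D$; by Proposition~\ref{prop:datum equivalent cohomologous} this means $\Upsilon_1(\cee)\equiv\Upsilon_1(\ceep)$. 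The converse runs the same diagram backwards, using $\Upsilon_2$. The $\approx$ case is identical, tracking the extra condition ``co-stabilizes $V$'', which under Lemma~\ref{lem:datum equivalent cohomologous}~\ref{It:isomorphism} is exactly $h=\mathrm{Id}_V$; one must check that $\varphi$ and $\varphi^{\prime}$ co-stabilize $V$, which was already proved inside Theorem~\ref{thm:extension to unified}. Consequently $\Upsilon_1$ descends to a bijection $\Upsilon:\mathbf{Extd}(E,D)=\mathbf{Exts}(E,D)/\!\equiv\ \longrightarrow\ \mathcal{O}(D,V)/\!\equiv\ =:H^2(V,D)$, $[\cee]\mapsto[\Omega(D,V)]$ with $\Omega(D,V)$ given by Eq.~\eqref{formulas:extension to unified}, and similarly $\Upsilon^{\prime}:\mathbf{Extd^{\prime}}(E,D)\rightarrow H^2_V(V,D)$; well-definedness and bijectivity of these quotient maps are formal consequences of the set-level bijection $\Upsilon_1$ together with the compatibility just shown. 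Finally I would remark that the construction is independent of the chosen retraction $\rho$ up to the relations, which is what legitimizes suppressing $\rho$ from the notation $H^2(V,D)$.

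I expect the main obstacle to be the compatibility step — showing that $\Upsilon_1$ respects $\equiv$ and $\approx$ in \emph{both} directions. The forward direction is the diagram chase above; the subtle point is that an arbitrary equivalence $\psi$ of extensions need not be ``compatible with $\rho$'', yet the composite $(\varphi^{\prime})^{-1}\psi\varphi$ must still be checked to stabilize $D$ (it does, since each of $\varphi,\psi,\varphi^{\prime}$ does, and $i=\varphi\circ i^{\prime}$) and, in the $\approx$ case, to co-stabilize $V$. Everything else is bookkeeping on the commutative ladders \eqref{diagram:equivalent cohomologous ee}--\eqref{diagram:equivalent cohomologous uu} and an appeal to the already-proved Lemma~\ref{lem:datum equivalent cohomologous} and Propositions~\ref{prop:bijection of extension and unified product}, \ref{prop:datum equivalent cohomologous}.
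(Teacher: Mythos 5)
Your proposal is correct and follows essentially the same route as the paper: it assembles the bijection $\Upsilon_{1}$ of Proposition~\ref{prop:bijection of extension and unified product}, the cohomologous identification $\cee\approx\dnv$ of Theorem~\ref{thm:extension to unified}, and the correspondence of Proposition~\ref{prop:datum equivalent cohomologous} (via Lemma~\ref{lem:datum equivalent cohomologous}) to conclude $\cee\equiv\ceep$ iff $\Omega(D,V)\equiv\Omega^{\prime}(D,V)$, so that $\Upsilon_{1}$ descends to the quotients. You merely spell out details the paper leaves implicit (the explicit composite $(\varphi^{\prime})^{-1}\circ\psi\circ\varphi$ and the reflexivity, symmetry and transitivity checks), which is a harmless elaboration rather than a different argument.
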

}

\allowdisplaybreaks{
\begin{proof} Here we only prove Item~\ref{thm:classification of extension1} and Item ~\ref{thm:classification of extension2} can be proved similarly.
By Proposition ~\ref{prop:bijection of extension and unified product}, we have a bijection
\begin{align*}
\Upsilon_{1}:\mathbf{Exts}(E,D)&\rightarrow \mathcal{O}(D,V)\\
\cee&\mapsto \Omega(D,V);\\
\ceep&\mapsto \Omega^{\prime}(D,V).
\end{align*}
Suppose that $\dnv$ and $\dnvp$ are corresponding unified products. By Theorem ~\ref{thm:extension to unified}, we know that $\cee\approx \dnv$ and $\ceep\approx \dnvp$, which implies that $\cee\equiv \dnv$ and $\ceep\equiv \dnvp$, so $\cee\equiv \ceep$ if and only if $\dnv\equiv \dnvp$.
Moreover, by Proposition ~\ref{prop:datum equivalent cohomologous}, we know that $\Omega(D,V)\equiv \Omega^{\prime}(D,V)$ if and only if $\dnv\equiv \dnvp$. Hence $\Omega(D,V)\equiv\Omega^{\prime}(D,V)$ if and only if $\cee\equiv \ceep$, so we have an equivalence relation $\equiv$  on the set $\mathcal{O}(D,V)$.
The map $\Upsilon_{1}$ induces a bijection $\Upsilon:H^{2}(V,D)\rightarrow \mathbf{Extd}(E,D)$. This completes the proof.
\end{proof}
}

\subsection{Flag extending structures of dendriform algebras}\label{subsection:flag}
In this subsection, we apply our main Theorem~\ref{thm:classification of extension} to the flag extending structure of dendriform algebras as a special case.

\begin{defn}
Let $D$ be a dendriform algebra, $E$ a vector space containing $D$ as a subspace. A dendriform algebra structure defined on $E$ is called a {\bf flag extending structure} of $D$ to $E$ if there exists a finite chain of subalgebras:
$$D=E_{0}\subset E_{1}\subset\cdots\subset E_{n}=E.$$
such that $E_{i}$ has codimension 1 in $E_{i+1}$, for $0\leq i \leq n-1$.
\end{defn}

Suppose that $D$ has finite codimension in $E$, $V$ is a space complement of $D$ in $E$ with basis $\{x_{1},x_{2},\cdots,x_{n}\}$, then the flag extending structure of $D$ to $E$ can be defined recursively. In fact, we can first define all the dendriform structures on $E_{1}:=D+{\bf k}\{x_{1}\}$ through dendriform extending structures $\Omega(D,{\bf k}\{x_{1}\})$. Second, we can define all the dendriform structures on $E_{2}:=E_{1}+{\bf k}\{x_{2}\}$ through dendriform extending structures $\Omega(E_{1},{\bf k}\{x_{2}\}),\cdots$, by finite steps, we can define all the dendriform structures on $E_{n}:=E_{n-1}+{\bf k}\{x_{n}\}$. Since each step is similar to the first one, we mainly consider the dendriform extending structure of $D$ through a 1-dimensional vector space $V$.
\begin{defn}~\label{def:flag}
Let $D$ be a dendriform algebra. A {\bf flag datum} of $D$ is a 12-tuple $\big(l_{1},l_{2},r_{1},r_{2},p_{1}$, $p_{2},q_{1},q_{2},a_{1},a_{2},\bar{k}_{1},\bar{k}_{2}\big)$ consisting of eight linear maps $l_{1},l_{2},r_{1},r_{2}:D\rightarrow {\bf k}$ and $p_{1},p_{2},q_{1},q_{2}:D\rightarrow D$, elements $a_{1},a_{2}\in D$ and $\bar{k}_{1},\bar{k}_{2}\in {\bf k}$, satisfying the following conditions for all $a,b\in D$,
 \allowdisplaybreaks{
      \begin{align*}
        (F1)\,\,
                &l_{1}(a\star b)=l_{1}(a)l_{1}(b);\quad
                l_{2}(a\prec b)=l_{2}(a)(l_{1}(b)+l_{2}(b));\\
                &l_{2}(a\succ b)=l_{1}(a)l_{2}(b);\quad
                (r_{1}(a)+r_{2}(a))r_{1}(b)=r_{1}(a\succ b);\\
                &r_{2}(a)r_{2}(b)=r_{2}(a\star b);\quad
                r_{1}(a)r_{2}(b)=r_{1}(a\prec b);\\
                &l_{2}(a)r_{1}(b)=0;\\
        (F2)\,\,&q_{1}(a\star b)=a\succ q_{1}(b)+q_{1}(a)l_{1}(b);\\
        &q_{2}(a\prec b)=a\prec (q_{1}(b)+q_{2}(b))+q_{2}(a)(l_{1}(b)+l_{2}(b));\\
        &q_{2}(a\succ b)=a\succ q_{2}(b)+q_{1}(a)l_{2}(b);\\
        (F3)\,\,&(p_{1}(a)+p_{2}(a))\succ b+(r_{1}(a)+r_{2}(a))p_{1}(b)=p_{1}(a\succ b);\\
        &p_{2}(a)\prec b+r_{2}(a)p_{2}(b)=p_{2}(a\star b);\\
        &p_{1}(a)\prec b+r_{1}(a)p_{2}(b)=p_{1}(a\prec b);\\
        (F4)\,\,&(q_{1}(a)+q_{2}(a))\succ b+(l_{1}(a)+l_{2}(a))p_{1}(b)=a\succ p_{1}(b)+q_{1}(a)r_{1} (b);\\
        &q_{2}(a)\prec b+l_{2}(a)p_{2}(b)=a\prec (p_{1}(b)+p_{2}(b))+q_{2}(a)(r_{1}(b)+r_{2}(b));\\
        &q_{1}(a)\prec b+l_{1}(a)p_{2}(b)=a\succ p_{2}(b)+q_{1}(a)r_{2}(b);\\
        (F5)\,\,&(a_{1}+a_{2})\succ a+(\bar{k}_{1}+\bar{k}_{2})p_{1}(a)=p_{1}(p_{1}(a))+a_{1}r_{1}(a);\\
        &a_{2}\prec a+\bar{k}_{2}p_{2}(a)=p_{2}(p_{1}(a)+p_{2}(a))+a_{2}(r_{1}(a)+r_{2}(a));\\
        &a_{1}\prec a+\bar{k}_{1}p_{2}(a)=p_{1}(p_{2}(a))+a_{1}r_{2}(a);\\
        (F6)\,\,&\bar{k}_{2}r_{1}(a)=r_{1}(p_{1}(a));\\
                &r_{2}(p_{1}(a)+p_{2}(a))+\bar{k}_{2}r_{1}(a)=0;\\
                &r_{1}(p_{2}(a))=0;\\
        (F7)\,\,&q_{1}(q_{1}(a)+q_{2}(a))+a_{1}(l_{1}(a)+l_{2}(a))=a\succ a_{1}+ q_{1}(a)\bar{k}_{1};\\
                &q_{2}(q_{2}(a))+a_{2}l_{2}(a)=a\prec (a_{1}+a_{2})+ q_{2}(a)(\bar{k}_{1}+\bar{k}_{2});\\
                &q_{2}(q_{1}(a))+a_{2}l_{1}(a)=a\succ a_{2}+q_{1}(a)\bar{k}_{2};\\
        (F8)\,\,&l_{1}(q_{1}(a)+q_{2}(a))+l_{2}(a)\bar{k}_{1}=0;\\
                &l_{2}(q_{2}(a))= l_{2}(a)\bar{k}_{1};\\
                &l_{2}(q_{1}(a))=0;\\
        (F9)\,\,&q_{1}(p_{1}(a)+p_{2}(a))+a_{1}(r_{1}(a)+r_{2}(a))=p_{1}(q_{1}(a))+a_{1}l_{1}(a);\\
               &q_{2}(p_{2}(a))+a_{2}r_{2}(a)=p_{2}(q_{1}(a)+q_{2}(a))+a_{2}(l_{1}(a)+l_{2}(a));\\
        &q_{2}(p_{1}(a))+a_{2}r_{1}(a)=p_{1}(q_{2}(a))+a_{1}l_{2}(a);\\
        (F10)\,&l_{1}(p_{1}(a)+p_{2}(a))+(r_{1}(a)+r_{2}(a))\bar{k}_{1}=r_{1}(q_{1}(a))+ \bar{k}_{1}l_{1}(a);\\
        &l_{2}(p_{2}(a))+r_{2}(a)\bar{k}_{2}=r_{2}(q_{1}(a)+q_{2}(a))+ \bar{k}_{2}(l_{1}(a)+l_{2}(a));\\
        &l_{2}(p_{1}(a))+r_{1}(a)\bar{k}_{2}=r_{1}(q_{2}(a))+ \bar{k}_{1}l_{2}(a);\\
        (F11)\,&q_{1}(a_{1}+a_{2})+a_{1}\bar{k}_{2}=p_{1}(a_{1});\\
        &q_{2}(a_{2})=p_{2}(a_{1}+a_{2})+a_{2}\bar{k}_{1};\\
        &q_{2}(a_{1})+a_{2}\bar{k}_{1}=p_{1}(a_{2})+a_{1}\bar{k}_{2};\\
        (F12)\,&l_{1}(a_{1}+a_{2})+\bar{k}_{2}\bar{k}_{1}=r_{1}(a_{1});\\
        &l_{2}(a_{2})=r_{2}(a_{1}+a_{2})+\bar{k}_{2}\bar{k}_{1};\\
        &l_{2}(a_{1})=r_{1}(a_{2}). \end{align*}
    }
\end{defn}

Denote by $\mathcal{F}(D)$ the set of all flag datums of $D$, then we have the following result.

\begin{thm}\label{thm:flag to extending}
Let $D$ be a dendriform algebra of codimension $1$ in the vector space $E$. Suppose that $V$ is a space complement of $D$ in $E$ with basis $\{x\}$. Then there exists a bijection
\begin{align*}
 \Phi:\,\,\,\,\,\,\,\,\,\,\,\,\,\,\,\,\,\,\,\,\mathcal{F}(D)\,\,\,\,\,\,\,\,\,\,\,\,\,\,\,\,\,\,\,\,\,&\rightarrow \mathcal{O}(D,V)\\
 (l_{1},l_{2},r_{1},r_{2},p_{1},p_{2},q_{1},q_{2},a_{1},a_{2},\bar{k}_{1},\bar{k}_{2})&\mapsto \Omega(D,V),
 \end{align*}
where the dendriform extending structure $\Omega(D,V)$ is defined as follows:
  \allowdisplaybreaks{
\begin{align}\label{formulas:flag and extending structure}
\begin{aligned}
\begin{split}
        a\triangleright_{1} x&=l_{1}(a)\,x;\\
        x\triangleleft_{1} a&=r_{1}(a)\,x;\\
        x\rightharpoonup_{1} a&=p_{1}(a);\\
        a\leftharpoonup_{1} x&=q_{1}(a);\\
        f_{1}(x,x)&=a_{1};\\
        x\succ_{V} x&=\bar{k}_{1}\,x;\\
\end{split}
\end{aligned}
\begin{aligned}
\begin{split}
        a\triangleright_{2} x&=l_{2}(a)\,x;\\
        x\triangleleft_{2} a&=r_{2}(a)\,x;\\
        x\rightharpoonup_{2} a&=p_{2}(a);\\
        a\leftharpoonup_{2} x&=q_{2}(a);\\
        f_{2}(x,x)&=a_{2};\\
        x\prec_{V} x&=\bar{k}_{2}\,x,\, a\in D.
\end{split}
\end{aligned}
\end{align}
}
\end{thm}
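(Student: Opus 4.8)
The plan is to show that $\Phi$ is a well-defined bijection by exhibiting an explicit inverse, and by checking that the two sides of the correspondence encode exactly the same data. The key observation is that when $V = \mathbf{k}\{x\}$ is one-dimensional, every bilinear map in the extending datum $\Omega(D,V)$ is completely determined by its value on the single relevant pair of basis elements, so the twelve maps $(\triangleright_1,\triangleright_2,\triangleleft_1,\triangleleft_2,\rightharpoonup_1,\rightharpoonup_2,\leftharpoonup_1,\leftharpoonup_2,f_1,f_2,\succ_V,\prec_V)$ are equivalent to the data of four linear functionals $D \to \mathbf{k}$ (from the maps $a \mapsto a\triangleright_i x$ and $a \mapsto x \triangleleft_i a$, which land in $\mathbf{k}x \cong \mathbf{k}$), four linear endomorphisms $D \to D$ (from $\rightharpoonup_i$ and $\leftharpoonup_i$), two elements of $D$ (from $f_i(x,x)$), and two scalars (from $x\succ_V x$ and $x\prec_V x$). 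This is precisely a flag datum. So first I would set up this dictionary carefully using Eq.~\eqref{formulas:flag and extending structure} and note that $\Phi$ is a bijection of \emph{sets} of tuples of maps, before worrying about the defining conditions.

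Second, I would verify that under this dictionary the conditions (F1)--(F12) defining $\mathcal{F}(D)$ are exactly the translations of the conditions (D1)--(D12) of Theorem~\ref{thm:datum and unified product} defining $\mathcal{O}(D,V)$. The matching is term-by-term: (D1), which asserts $(V,\triangleright_1,\triangleright_2,\triangleleft_1,\triangleleft_2)$ is a $D$-bimodule (Eqs.~\eqref{formulas:left module definition}--\eqref{formulas:bimodule definition}), becomes (F1) once one substitutes $a \triangleright_i x = l_i(a)x$, $x \triangleleft_i a = r_i(a)x$ and reads off the scalar coefficients — the bimodule cross-relation in \eqref{formulas:bimodule definition} produces the orthogonality condition $l_2(a)r_1(b)=0$. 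Then (D2) $\leftrightarrow$ (F2), (D3) $\leftrightarrow$ (F3), and so on through (D12) $\leftrightarrow$ (F12); in each case one plugs \eqref{formulas:flag and extending structure} into the three identities comprising that condition, uses bilinearity to reduce to the basis element $x$ (and the pair $(x,x)$, and the triple $(x,x,x)$ where relevant), and collects the $\mathbf{k}x$-component and the $D$-component separately. For instance (D12), which involves $f(x,y)\triangleright_1 z$, $(x\star_V y)\succ_V z$, etc., with all three slots filled by $x$, becomes the three scalar/element identities of (F6) (the $\mathbf{k}x$-part) together with... — more precisely the bookkeeping is that each (D$i$) splits into a $D$-valued part and a $\mathbf{k}$-valued part, and these get distributed among the (F$j$)'s as displayed. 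I would carry this out schematically rather than writing all $36$ substitutions.

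Third, with the dictionary established as a bijection on underlying tuples and the defining conditions shown to correspond, it follows formally that $\Phi$ restricts to a bijection $\mathcal{F}(D) \to \mathcal{O}(D,V)$: a tuple lies in $\mathcal{F}(D)$ iff it satisfies (F1)--(F12) iff its image satisfies (D1)--(D12) iff (by Theorem~\ref{thm:datum and unified product} and Remark~\ref{rem:relations of extending structure}) the image lies in $\mathcal{O}(D,V)$. The inverse map is the obvious one: given $\Omega(D,V) \in \mathcal{O}(D,V)$, define $l_i(a)$ by $a\triangleright_i x = l_i(a)x$, $r_i(a)$ by $x\triangleleft_i a = r_i(a)x$, $p_i = (a \mapsto x\rightharpoonup_i a)$, $q_i = (a \mapsto a \leftharpoonup_i x)$, $a_i = f_i(x,x)$, $\bar k_i$ by $x\succ_V x = \bar k_1 x$ and $x \prec_V x = \bar k_2 x$.

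\textbf{Expected main obstacle.} None of the steps is deep, but the bulk of the work — and the place where errors are easy — is the third step: the careful, correct term-by-term verification that (D1)--(D12) translate to (F1)--(F12), keeping straight which $\triangleright$ versus $\triangleleft$, which subscript $1$ versus $2$, and which $\succ$ versus $\prec$ appears in each of the three sub-identities of each condition, and correctly separating the $\mathbf{k}x$-component from the $D$-component (the latter being why, e.g., the right-module relations contribute to (F1) on the functional side while the $\rightharpoonup$/$\leftharpoonup$ data contribute to (F2)--(F5), (F7), (F9), (F11) on the $D$ side and to (F6), (F8), (F10), (F12) on the functional side after applying $l_i$ or $r_i$). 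I expect to present this as a representative sample of the substitutions — doing (D1)$\to$(F1), one of the mixed ones like (D4)$\to$(F4) and (D10)$\to$(F10), and (D12)$\to$(F6) — and assert that the remaining cases are entirely analogous.
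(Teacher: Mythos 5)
Your proposal is correct and is essentially the paper's own proof, which consists precisely of the direct check that, under the dictionary of Eq.~\eqref{formulas:flag and extending structure} (forced by $\dim V=1$, with the obvious inverse you describe), the conditions (F1)--(F12) are the translations of (D1)--(D12) of Theorem~\ref{thm:datum and unified product}. One small correction to your bookkeeping: each condition (D$i$) is entirely $D$-valued or entirely $V$-valued and corresponds to (F$i$) with the same index --- so (D12) yields the scalar identities (F12) and it is (D6) that yields (F6), not (D12) as stated in your illustrative example.
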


\begin{proof}
We can directly check that Eqs. (F1)-(F12) of the flag datum in Definition~\ref{def:flag}, which are equivalent to Eqs. (D1)-(D12) of the dendriform extending structure $\Omega(D,V)$ by Eq.~\eqref{formulas:flag and extending structure}, so $\Phi$ is a well defined bijection.
\end{proof}

Under the conditions of Theorem ~\ref{thm:flag to extending}, suppose that
\begin{align}\label{formulas:flag and extending equivalent}
g(x)=g_{0};\quad h(x)=\bar{h}_{0}x.
\end{align}
where the linear maps $g,h$ are defined in Definition ~\ref{defn:datum equivalent} and $g_{0}\in D$, $\bar{h}_{0}\in {\bf k}$.
By Eq.~\eqref{formulas:flag and extending structure} and Definition ~\ref{defn:datum equivalent}, we give the following definition.
\begin{defn}\label{defn:flag datum equivalent}
Let $D$ be a dendriform algebra. Suppose that $\Omega(D)=(l_{1},l_{2},r_{1},r_{2},p_{1},p_{2},q_{1},q_{2},$ $a_{1},a_{2},\bar{k}_{1}, \bar{k}_{2}) $ and $\Omega^{\prime}(D)=(l^{\prime}_{1},l^{\prime}_{2},r^{\prime}_{1},r^{\prime}_{2},
p^{\prime}_{1},p^{\prime}_{2},q^{\prime}_{1},q^{\prime}_{2},a^{\prime}_{1},a^{\prime}_{2},
\bar{k_1}^{\prime}, \bar{k_2}^{\prime})$ are two flag datums of $D$. They are called {\bf equivalent}, denoted by $\Omega(D)\equiv \Omega^{\prime}(D)$ if there exist an element $g_{0}\in D$ and a number $\bar{h}_{0}\in {\bf k}\,(\bar{h}_{0}\neq 0)$, satisfying the following conditions with $a\in D$,
\allowdisplaybreaks{
\begin{align}\label{formulas:flag datum equivalent}
\begin{split}
        l^{\prime}_{1}=&l_{1};\,\,\,\,l^{\prime}_{2}=l_{2};\,\,\,\,r^{\prime}_{1}=r_{1};\,\,\,\,r^{\prime}_{2}=r_{2};\\
        p^{\prime}_{1}(a)=&\frac{1}{\bar{h}_{0}}(p_{1}(a)+g_{0}r_{1}(a)-g_{0}\succ a);\,\,\,\,
        p^{\prime}_{2}(a)=\frac{1}{\bar{h}_{0}}(p_{2}(a)+g_{0}r_{2}(a)-g_{0}\prec a);\\
        q^{\prime}_{1}(a)=&\frac{1}{\bar{h}_{0}}(q_{1}(a)+g_{0}l_{1}(a)-a\succ g_{0});\,\,\,\,
        q^{\prime}_{2}(a)=\frac{1}{\bar{h}_{0}}(q_{2}(a)+g_{0}l_{2}(a)-a\prec g_{0});\\
        a^{\prime}_{1}=&\frac{1}{h^{2}_{0}}(a_{1}+g_{0}\bar{k}_{1}-q_{1}(g_{0})-g_{0}l_{1}(g_{0})+g_{0}\succ g_{0}-p_{1}(g_{0})-g_{0}r_{1}(g_{0}));\\
        a^{\prime}_{2}=&\frac{1}{h^{2}_{0}}(a_{2}+g_{0}\bar{k}_{2}-q_{2}(g_{0})-g_{0}l_{2}(g_{0})+g_{0}\prec g_{0}-p_{2}(g_{0})-g_{0}r_{2}(g_{0}));\\
        \bar{k}^{\prime}_{1}=&\frac{1}{\bar{h}_{0}}(\bar{k}_{1}-l_{1}(g_{0})-r_{1}(g_{0}));\,\,\,\,
        \bar{k}^{\prime}_{2}=\frac{1}{\bar{h}_{0}}(\bar{k}_{2}-l_{2}(g_{0})-r_{2}(g_{0})).
\end{split}
\end{align}
}
 Moreover, if $\bar{h}_{0}=1$, these two flag datums are called {\bf cohomologous}, denoted by $\Omega(D)\approx\Omega^{\prime}(D)$.
\end{defn}


\begin{thm}
\allowdisplaybreaks{
\label{classification of extension}
Let $D$ be a dendriform algebra of codimension $1$ in the vector space $E$. Suppose that $V$ is a space complement of $D$ in $E$ with basis $\{x\}$.
Then
\begin{enumerate}
\item \label{thm:classification of extension1}the relation $\equiv$ is an equivalence relation on the set $\mathcal{F}(D)$. Moreover, suppose that $H^{2}(D):=\mathcal{F}(D)/\equiv$, then there exists a bijection
\begin{align*}
 \Upsilon:\mathbf{Extd}(E,D)&\rightarrow H^{2}(D);\\
 [\cee]&\mapsto [\Omega(D)],
 \end{align*}
where the flag datum $\Omega(D)$ is obtained by {\rm Eqs. ~(\ref{formulas:extension to unified}) and~(\ref{formulas:flag and extending structure})}, and $[\Omega(D)]$ (resp. $[\cee]$) denote the equivalence class of $\Omega(D)$ (resp. $\cee$) via $\equiv$.
 \item \label{thm:classification of extension2}the relation $\approx$ is an equivalence relation on the set $\mathcal{F}(D)$. Moreover, suppose that $H_{V}^{2}(D):=\mathcal{F}(D)/\approx$, then there exists a bijection
\begin{align*}
 \Upsilon^{\prime}:\mathbf{Extd^{\prime}}(E,D)&\rightarrow H_{V}^{2}(D);\\
 \overline{\cee}&\mapsto \overline{\Omega(D)},
\end{align*}
where the flag datum $\Omega(D)$ is obtained by {\rm Eqs. ~(\ref{formulas:extension to unified}) and~(\ref{formulas:flag and extending structure})}, and $\overline{\Omega(D)}$ (resp. $\overline{\cee}$) denote the equivalence class of $\Omega(D)$ (resp. $\cee$) via $\approx$.
\end{enumerate}
}
\end{thm}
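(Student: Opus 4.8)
The strategy is to transport the classification of Theorem~\ref{thm:classification of extension} along the bijection $\Phi\colon\mathcal{F}(D)\to\mathcal{O}(D,V)$ of Theorem~\ref{thm:flag to extending}. Since $V$ is one-dimensional with basis $\{x\}$, a linear map $g\colon V\to D$ is nothing but the element $g_{0}:=g(x)\in D$, and a linear map $h\colon V\to V$ is nothing but the scalar $\bar{h}_{0}\in{\bf k}$ determined by $h(x)=\bar{h}_{0}x$; moreover $h\in\mathrm{Aut}_{\bf k}(V)$ precisely when $\bar{h}_{0}\neq 0$. This is exactly the parametrization recorded in Eq.~(\ref{formulas:flag and extending equivalent}), so the pairs $(g,h)$ of Definition~\ref{defn:datum equivalent} correspond bijectively to the pairs $(g_{0},\bar{h}_{0})$ with $\bar{h}_{0}\neq 0$ (resp. $\bar{h}_{0}=1$ in the cohomologous case).

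The core step is to check that $\Phi$ intertwines the two notions of equivalence: for flag datums $\Omega(D),\Omega^{\prime}(D)$ one has $\Omega(D)\equiv\Omega^{\prime}(D)$ in the sense of Definition~\ref{defn:flag datum equivalent} if and only if $\Phi(\Omega(D))\equiv\Phi(\Omega^{\prime}(D))$ in the sense of Definition~\ref{defn:datum equivalent}, and likewise for $\approx$. I would verify this by substituting the dictionary~(\ref{formulas:flag and extending structure}) together with $g(x)=g_{0}$, $h(x)=\bar{h}_{0}x$ into conditions (E1)--(E5) and evaluating on the basis element(s): (E1) collapses to $l_{i}^{\prime}=l_{i}$ and $r_{i}^{\prime}=r_{i}$ for $i=1,2$; (E2) and (E3), evaluated at $x$, yield the displayed formulas for $p_{i}^{\prime}(a)$ and $q_{i}^{\prime}(a)$; (E4) at $(x,x)$ yields the formula for $a_{i}^{\prime}$ (after back-substituting the formulas just obtained for $p_{i}^{\prime}$ and $q_{i}^{\prime}$); and (E5) at $(x,x)$ yields the formula for $\bar{k}_{i}^{\prime}$. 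The inverse $h^{-1}$ corresponds to the scalar $\bar{h}_{0}^{-1}$, which accounts for the $1/\bar{h}_{0}$ and $1/\bar{h}_{0}^{2}$ factors, and setting $\bar{h}_{0}=1$ reproduces the $\approx$-version verbatim. No genuinely new identity is needed beyond Definition~\ref{defn:datum equivalent}.

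With the intertwining in hand the argument closes formally. Since $\Phi$ is a bijection carrying $\equiv$ (resp. $\approx$) on $\mathcal{F}(D)$ to $\equiv$ (resp. $\approx$) on $\mathcal{O}(D,V)$, and the latter is an equivalence relation by Theorem~\ref{thm:classification of extension}, the relation $\equiv$ (resp. $\approx$) on $\mathcal{F}(D)$ is an equivalence relation and $\Phi$ descends to a bijection $H^{2}(D)=\mathcal{F}(D)/\equiv\ \xrightarrow{\ \sim\ }\ \mathcal{O}(D,V)/\equiv\ =H^{2}(V,D)$ (resp. $H_{V}^{2}(D)\xrightarrow{\ \sim\ }H_{V}^{2}(V,D)$). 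Composing the bijection $\Upsilon\colon\mathbf{Extd}(E,D)\to H^{2}(V,D)$ of Theorem~\ref{thm:classification of extension} with the inverse of this induced bijection gives the asserted bijection $\mathbf{Extd}(E,D)\to H^{2}(D)$, $[\cee]\mapsto[\Omega(D)]$; and by construction the flag datum $\Omega(D)$ attached to an extension $\cee$ is exactly $\Phi^{-1}$ of the dendriform extending structure produced from $\cee$ by Eq.~(\ref{formulas:extension to unified}), i.e. it is obtained through Eqs.~(\ref{formulas:extension to unified}) and~(\ref{formulas:flag and extending structure}) as stated. Replacing $\equiv$ by $\approx$ and $\mathbf{Extd}$ by $\mathbf{Extd^{\prime}}$ throughout proves the second statement.

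The only real obstacle is the bookkeeping in the second paragraph: confirming that the eight identities of Eq.~(\ref{formulas:flag datum equivalent}) are exactly the images of (E1)--(E5) under $\Phi$, with all signs and every occurrence of the scalar $\bar{h}_{0}$ placed correctly. Everything else is a formal consequence of Theorems~\ref{thm:flag to extending} and~\ref{thm:classification of extension}.
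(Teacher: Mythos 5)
Your proposal is correct and follows essentially the same route as the paper: the paper's proof simply says the theorem "can be directly obtained from Theorem~\ref{thm:classification of extension} by some simple computations," where the simple computations are precisely your transport of the equivalence relations along the bijection $\Phi$ of Theorem~\ref{thm:flag to extending}, using that Definition~\ref{defn:flag datum equivalent} is by construction the translation of Definition~\ref{defn:datum equivalent} under Eqs.~(\ref{formulas:flag and extending structure}) and~(\ref{formulas:flag and extending equivalent}). Your spelled-out verification of (E1)--(E5), including the back-substitution giving the $1/\bar{h}_{0}^{2}$ factor in $a_{i}^{\prime}$, matches the paper's Eq.~(\ref{formulas:flag datum equivalent}).
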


\begin{proof}
It can be directly obtained from Theorem ~\ref{thm:classification of extension} by some simple computations.
\end{proof}
For better understanding, now we give an example to compute $H^{2}(D)$ and $H_{V}^{2}(D)$.
\begin{exam}\label{exam:ES problem}
To continue Example~\ref{exam:dendriform algebra}~\ref{exam:dendriform algebra1} and suppose $V={\bf k}\{e_{2}\}$.
Let  $(l_{1},l_{2},r_{1},r_{2},p_{1},p_{2},q_{1},q_{2},$ $a_{1},a_{2},\bar{k}_{1},\bar{k}_{2}) $ be a flag datum of $D$, and $g_{0}$ defined in Definition ~\ref{defn:flag datum equivalent}. Suppose that
  \allowdisplaybreaks{
\begin{align}\label{formulas:flag and D}
\begin{aligned}
\begin{split}
        l_{1}(e_{1})&=\bar{l}_{1}e_{1};\\
        p_{1}(e_{1})&=\bar{p}_{1}e_{1};\\
        a_{1}&=\bar{a}_{1}e_{1};\\
\end{split}
\end{aligned}
\begin{aligned}
\begin{split}
        l_{2}(e_{1})&=\bar{l}_{2}e_{1};\\
        p_{2}(e_{1})&=\bar{p}_{2}e_{1};\\
        a_{2}&=\bar{a}_{2}e_{1};\\
\end{split}
\end{aligned}
\begin{aligned}
\begin{split}
        r_{1}(e_{1})&=\bar{r}_{1}e_{1};\\
        q_{1}(e_{1})&=\bar{q}_{1}e_{1};\\
        g_{0}&=\bar{g}_{0}e_{1};
\end{split}
\end{aligned}
\begin{aligned}
\begin{split}
        r_{2}(e_{1})&=\bar{r}_{2}e_{1};\\
        q_{2}(e_{1})&=\bar{q}_{2}e_{1};\\
        a=b&=e_{1}.
\end{split}
\end{aligned}
\end{align}
}where elements $\bar{l}_{1},\bar{l}_{2},\bar{r}_{1},\bar{r}_{2},\bar{p}_{1},\bar{p}_{2},\bar{q}_{1},\bar{q}_{2},\bar{a}_{1},\bar{a}_{2},\bar{k}_{1},\bar{k}_{2},\bar{g}_{0}\in {\bf k}$, and the 12-tuple $(\bar{l}_{1},\bar{l}_{2},\bar{r}_{1},\bar{r}_{2},\bar{p}_{1},\bar{p}_{2}$, $\bar{q}_{1},\bar{q}_{2},\bar{a}_{1},\bar{a}_{2},\bar{k}_{1},\bar{k}_{2})$ is also called a flag datum of $D$ for convenience.

\begin{table}[htbp]
  \centering
  \renewcommand\arraystretch{1.8}
\begin{tabular}{|c|c|c|c|c|}
 \hline
  case&$(\bar{l}_{1},\bar{l}_{2},\bar{r}_{1},\bar{r}_{2},\bar{p}_{1},\bar{p}_{2},\bar{q}_{1},\bar{q}_{2},
  \bar{a}_{1},\bar{a}_{2},\bar{k}_{1},\bar{k}_{2})$ & $\bar{h}_{0}$ & $\bar{g}_{0}$ & equivalent (cohomologous) class  \\

  \hline
  1&$(1,-1,0,0,\bar{p}_{1},0,0,\bar{p}_{1},0,\bar{p}^{2}_{1},\bar{p}_{1},-\bar{p}_{1})$ & $1$ & $-\bar{p}_{1}$ & $(1,-1,0,0,0,0,0,0,0,0,0,0)$  \\

      \hline
   \multirow{2}*{2}& $(1,-1,0,0,\bar{p}_{1},0,0,\bar{p}_{1},0,\bar{p}^{2}_{1}-\bar{k}_{2}\bar{p}_{1},$ & $\frac{1}{\bar{k}_{2}}$ ~& $-\frac{1}{\bar{k}_{2}}\bar{p}_{1}$ & $(1,-1,0,0,0,0,0,0,0,0,0,1)$  \\
  \cline{3-5}
      ~&$\bar{p}_{1},\bar{k}_{2}-\bar{p}_{1})$, $\bar{k}_{2}\neq 0$ & $1$ & $-\bar{p}_{1}$ & $(1,-1,0,0,0,0,0,0,0,0,0,\bar{k}_{2})$\\
    \hline
   3& $(0,0,1,0,0,0,\bar{q}_{1},0,0,0,\bar{q}_{1},0)$ & $1$ & $-\bar{q}_{1}$ & $(0,0,1,0,0,0,0,0,0,0,0,0)$  \\

  \hline
  4&$(0,0,0,1,\bar{p}_{1},-\bar{p}_{1},\bar{p}_{1},0,\bar{p}^{2}_{1},-\bar{p}^{2}_{1},0,\bar{p}_{1})$ & $1$ & $-\bar{p}_{1}$ & $(0,0,0,1,0,0,0,0,0,0,0,0)$  \\
\hline
  5&$(1,0,0,0,\bar{p}_{1},0,0,0,0,0,\bar{p}_{1},0)$ & $1$ & $-\bar{p}_{1}$ & $(1,0,0,0,0,0,0,0,0,0,0,0)$  \\
\hline
 6& $(1,0,1,0,0,0,0,0,-\frac{1}{4}\bar{k}^{2}_{1},0,\bar{k}_{1},0)$ & $1$ & $-\frac{\bar{k}_{1}}{2}$ & $(1,0,1,0,0,0,0,0,0,0,0,0)$  \\
 \hline
   \multirow{2}*{7}& $(1,0,1,0,0,0,0,0,\bar{a}_{1}-\frac{1}{4}\bar{k}^{2}_{1},0$, & $\frac{1}{\sqrt{\bar{a}_{1}}}$ ~& $-\frac{\bar{k}_{1}}{2\sqrt{\bar{a}_{1}}}$ & $(1,0,1,0,0,0,0,0,1,0,0,0)$  \\
  \cline{3-5}
     ~&$\bar{k}_{1},0)$, $\bar{a}_{1}\neq 0$  & $1$ & $-\frac{\bar{k}_{1}}{2}$ & $(1,0,1,0,0,0,0,0,\bar{a}_{1},0,0,0)$\\
\hline
  8 &$(1,0,0,1,\bar{p}_{1},-\bar{p}_{1},0,0,0,-\bar{p}_{1}^{2},\bar{p}_{1},\bar{p}_{1})$ & $1$ & $-\bar{p}_{1}$ & $(1,0,0,1,0,0,0,0,0,0,0,0)$  \\

  \hline
   \multirow{2}*{9}& $(1,0,0,1,\bar{p}_{1},-\bar{p}_{1},0,0,0,-\bar{p}_{1}\bar{k}_{2}-\bar{p}_{1}^{2}$,& $\frac{1}{\bar{k}_{2}}$ ~& $-\frac{1}{\bar{k}_{2}}\bar{p}_{1}$ & $(1,0,0,1,0,0,0,0,0,0,0,1)$  \\
   \cline{3-5}
      ~&$\bar{p}_{1},\bar{k}_{2}+\bar{p}_{1})$, $\bar{k}_{2}\neq 0$  & $1$ & $-\bar{p}_{1}$ & $(1,0,0,1,0,0,0,0,0,0,0,\bar{k}_{2})$\\
  \hline
   \multirow{2}*{10} & $(1,0,0,1,\bar{p}_{1},-\bar{p}_{1},0,0,-\bar{p}_{1}\bar{k}_{1},-\bar{p}_{1}^{2}$, & $\frac{1}{\bar{k}_{1}}$ ~& $-\frac{1}{\bar{k}_{1}}\bar{p}_{1}$ & $(1,0,0,1,0,0,0,0,0,0,1,0)$  \\
    \cline{3-5}
    ~&  $\bar{k}_{1}+\bar{p}_{1},\bar{p}_{1})$, $\bar{k}_{1}\neq 0$  & $1$ & $-\bar{p}_{1}$ & $(1,0,0,1,0,0,0,0,0,0,\bar{k}_{1},0)$\\
\hline
 11 & $(1,-1,0,1,\bar{p}_{1},-\bar{p}_{1},0,\bar{p}_{1},0,0,\bar{p}_{1},0)$ & $1$ & $-\bar{p}_{1}$ & $(1,-1,0,1,0,0,0,0,0,0,0,0)$  \\
\hline
  12 & $(0,0,0,0,\bar{q}_{1},0,\bar{q}_{1},0,\bar{q}^{2}_{1},0,0,0)$ & $1$ & $-\bar{q}_{1}$ & $(0,0,0,0,0,0,0,0,0,0,0,0)$  \\
\hline
  \multirow{2}*{13} &  $(0,0,0,0,\bar{q}_{1},0,\bar{q}_{1},0,\bar{q}_{1}(\bar{q}_{1}-\bar{k}_{1}),0$,  & $\frac{1}{\bar{k}_{1}}$ ~& $-\frac{1}{\bar{k}_{1}}\bar{q}_{1}$ & $(0,0,0,0,0,0,0,0,0,0,1,0)$  \\
    \cline{3-5}
    ~&  $\bar{k}_{1},0)$, $\bar{k}_{1}\neq 0$ & $1$ & $-\bar{q}_{1}$ & $(0,0,0,0,0,0,0,0,0,0,\bar{k}_{1},0)$\\
  \hline
   \multirow{2}*{14} & $(0,0,0,0,\bar{q}_{1}+\bar{k}_{1},0,\bar{q}_{1},\bar{k}_{1},\bar{q}^{2}_{1},\bar{q}_{1}\bar{k}_{1}$, & $\frac{1}{\bar{k}_{1}}$ ~& $-\frac{1}{\bar{k}_{1}}\bar{q}_{1}$ & $(0,0,0,0,1,0,0,1,0,0,1,0)$  \\
    \cline{3-5}
    ~&  $\bar{k}_{1},0)$, $\bar{k}_{1}\neq 0$ & $1$ & $-\bar{q}_{1}$ & $(0,0,0,0,\bar{k}_{1},0,0,\bar{k}_{1},0,0,\bar{k}_{1},0)$\\
   \hline
   \multirow{2}*{15} &  $(0,0,0,0,\bar{q}_{1},0,\bar{q}_{1},0,\bar{q}^{2}_{1},-\bar{q}_{1}\bar{k}_{2}$,  & $\frac{1}{\bar{k}_{2}}$ ~& $-\frac{1}{\bar{k}_{2}}\bar{q}_{1}$ & $(0,0,0,0,0,0,0,0,0,0,0,1)$  \\
    \cline{3-5}
   ~&   $0,\bar{k}_{2})$, $\bar{k}_{2}\neq 0$ & $1$ & $-\bar{q}_{1}$ & $(0,0,0,0,0,0,0,0,0,0,0,\bar{k}_{2})$\\
    \hline
   \multirow{2}*{16} & $(0,0,0,0,\bar{q}_{1}+\bar{k}_{2},0,\bar{q}_{1},\bar{k}_{2},\bar{q}^{2}_{1}+\bar{q}_{1}\bar{k}_{2},0$, & $\frac{1}{\bar{k}_{2}}$ ~& $-\frac{1}{\bar{k}_{2}}\bar{q}_{1}$ & $(0,0,0,0,1,0,0,1,1,0,0,1)$  \\
   \cline{3-5}
    ~&  $0,\bar{k}_{2})$, $\bar{k}_{2}\neq 0$ & $1$ & $-\bar{q}_{1}$ & $(0,0,0,0,\bar{k}_{2},0,0,\bar{k}_{2},\bar{k}_{2},0,0,\bar{k}_{2})$\\
    \hline
\end{tabular}
\\[5pt]
  \caption{classifying flag datum}\label{table:classifying flag datum}
  \vspace{-1em}
\end{table}

Computing Eq.~(F1) by Eq.~(\ref{formulas:flag and D}), we obtain all possible solutions of
$\bar{l}_{1},\bar{l}_{2},\bar{r}_{1},\bar{r}_{2}$ in Table~\ref{table:bimodules} of Example ~\ref{exam:bimodule}. Computing Eqs. (F2)-(F12) by Eq.~(\ref{formulas:flag and D}), we totally get $16$ different cases of all the flag datums, refer to the second column of Table~\ref{table:classifying flag datum}. Computing Eq. ~(\ref{formulas:flag datum equivalent}) by Eq.~(\ref{formulas:flag and D}), we obtain the third and fourth columns of Table~\ref{table:classifying flag datum}, then we classify all the flag datums, the result is in the fifth column of Table~\ref{table:classifying flag datum}.

First, we analyze the data in Table~\ref{table:classifying flag datum}. For Case 1, all the flag datums are equivalent to the flag datum $(1,-1,0,0$, $0,0,0,0,0,0,0,0)$. Moreover, the cohomologous class coincides with the equivalent class.
For Case 2, all the flag datums are equivalent to the flag datum $(1,-1,0,0,0,0,0,0,0,0,$ $0,1)$. When $\bar{k}_{2}$ is given, they are cohomologous to the flag datum $\big(1,-1,0,0,0,0,0,0,0,0,0,\bar{k}_{2}\big)$, then there exists many cohomologous classes, but they belong to the same equivalent class. In the case, two classifying methods are different by remark~\ref{rem:extd and extdp}.

Second,
by Eqs.~(\ref{formulas:unified to extension}),~(\ref{formulas:flag and extending structure}) and~(\ref{formulas:flag and D}) and for each flag datum $(\bar{l}_{1},\bar{l}_{2},\bar{r}_{1},\bar{r}_{2},\bar{p}_{1},\bar{p}_{2},\bar{q}_{1},\bar{q}_{2},
  \bar{a}_{1},\bar{a}_{2},\bar{k}_{1},$ $\bar{k}_{2})$, we can obtain an extension $\cee$ defined on $E={\bf k}\{e_{1},e_{2}\}$, where $\succ_{E}$ and $\prec_{E}$ is defined as follows:
\begin{align}\label{formulas:flag to extension}
\begin{aligned}
e_{1}\succ_{E}e_{1}&=e_{1},\\
e_{1}\succ_{E}e_{2}&=\bar{q}_{1}e_{1}+\bar{l}_{1}e_{2},\\
e_{2}\succ_{E}e_{1}&=\bar{p}_{1}e_{1}+\bar{r}_{1}e_{2},\\
e_{2}\succ_{E}e_{2}&=\bar{a}_{1}e_{1}+\bar{k}_{1}e_{2},\\
\end{aligned}\,\,\,\,\,\,\,
\begin{aligned}
e_{1}\prec_{E}e_{1}&=0,\\
e_{1}\prec_{E}e_{2}&=\bar{q}_{2}e_{1}+\bar{l}_{2}e_{2},\\
e_{2}\prec_{E}e_{1}&=\bar{p}_{2}e_{1}+\bar{r}_{2}e_{2},\\
e_{2}\prec_{E}e_{2}&=\bar{a}_{2}e_{1}+\bar{k}_{2}e_{2}.
\end{aligned}
\end{align}
For each case, by Eq.~(\ref{formulas:flag to extension}), we obtain extensions $\cee$ from the second column and extension $\ceep$ from the fifth column, the isomorphism of dendriform algebras $\psi:\ceep\rightarrow \cee$ stabilizing $D$ (and costabilizing $V$) in Diagram ~(\ref{diagram:equivalent cohomologous ee}) can be defined by \[\psi(e_{1})=e_{1},\psi(e_{2})=\bar{g}_{0}e_{1}+\bar{h}_{0}e_{2}.\]
 More specifically, by Eqs. ~\eqref{formulas:flag and extending equivalent} and ~\eqref{formulas:flag and D}, we have $g(e_{2})=\bar{g}_{0}e_{1},h(e_{2})=\bar{h}_{0}e_{2}.$ By Lemma ~\ref{lem:datum equivalent cohomologous}, the isomorphism of dendriform algebras $\phi:\dnvp\rightarrow \dnv$ stabilizing $D$ (and costabilizing $V$) in Diagram ~(\ref{diagram:equivalent cohomologous uu}) can be defined by \[\phi(e_{1},0)=(e_{1},0),\phi(0,e_{2})=(\bar{g}_{0}e_{1},\bar{h}_{0}e_{2}).\]
By Theorem ~\ref{thm:extension to unified}, the isomorphism $\psi$ is defined by $\varphi\circ\phi\circ\varphi^{-1}$, where $\varphi$ is defined by Eq. ~\eqref{formulas:map eu}, refer to the following diagram:
\begin{align*}
\begin{split}
\xymatrix{(\bar{g}_{0},\bar{h}_{0})\ar[r]^{\mathrm{Eq.~\eqref{formulas:flag and D}}}&(g_{0},\bar{h}_{0})\ar[r]^{\mathrm{Eq.~\eqref{formulas:flag and extending equivalent}}}\ar[l]&(g,h)\ar[r]^{\quad\quad\mathrm{Lemma ~\ref{lem:datum equivalent cohomologous}}}\ar[l]&\ar[l]
}
\begin{split}
\end{split}
\makecell{\xymatrix{D\natural V \ar[r]^{\varphi\quad\,\,} &\cee\\
D\natural^{\prime} V \ar[u]^{\phi}\ar[r]^{\varphi\quad\,\,} &\ceep\ar[u]^{\psi}}
}
\end{split}
\end{align*}
\end{exam}

\newpage
\section{Special cases of unified products for dendriform algebras}
In this section, we mainly consider bicrossed products, cocycle semidirect products and nonabelian semidirect products as special cases of unified products. Moreover, we consider the splitting of short exact sequences.

\subsection{Matched pairs and bicrossed products of dendriform algebras}
In this subsection, we mainly consider the factorization problem for dendriform algebras, it is a subproblem of the ES problem. To solve the factorization problem, we consider matched pairs and bicrossed products of dendriform algebras, which are special cases of dendriform extending structures and unified products, respectively.

\begin{defn}
A {\bf matched pair} of dendriform algebras is a system $(D,V,\triangleright_{1},\triangleright_{2},\triangleleft_{1},\triangleleft_{2},\rightharpoonup_{1},\rightharpoonup_{2},\leftharpoonup_{1},\leftharpoonup_{2})$ consisting of two dendriform algebras $D$ and $V$, and eight bilinear maps:
\begin{align*}
\begin{split}
\triangleright_{1},\triangleright_{2}&:\,D\times V\rightarrow V,\\
\rightharpoonup_{1},\rightharpoonup_{2}&:\,V\times D\rightarrow D,
\end{split}
\begin{split}
\triangleleft_{1},\triangleleft_{2}&:\,V\times D\rightarrow V,\\
\leftharpoonup_{1},\leftharpoonup_{2}&:\,D\times V\rightarrow D
\end{split}
\end{align*}
satisfying the following conditions for all $a,b\in D$ and $x,y\in V$:
\allowdisplaybreaks{
\begin{align*}
        (M1)\,\,&\text{$\big(V,\triangleright_{1},\triangleright_{2},\triangleleft_{1},\triangleleft_{2}\big)$ is a $D$ bimodule, $\big(D,\rightharpoonup_{1},\rightharpoonup_{2},\leftharpoonup_{1},\leftharpoonup_{2}\big)$ is a $V$ bimodule;}\\
        (M2)\,\,&(a\star b)\leftharpoonup_{1} x=a\succ(b\leftharpoonup_{1} x)+a\leftharpoonup_{1}(b\triangleright_{1} x);\\
        &(a\prec b)\leftharpoonup_{2} x=a\prec (b\leftharpoonup x)+a\leftharpoonup_{2}(b\triangleright x);\\&(a\succ b)\leftharpoonup_{2} x=a\succ(b\leftharpoonup_{2} x)+a\leftharpoonup_{1}(b\triangleright_{2} x);\\
        (M3)\,\,&(x\rightharpoonup a)\succ b+(x\triangleleft a)\rightharpoonup_{1} b=x\rightharpoonup_{1}(a\succ b);\\
        &(x\rightharpoonup_{2} a)\prec b+(x\triangleleft_{2} a)\rightharpoonup_{2} b=x\rightharpoonup_{2}(a\star b);\\&(x\rightharpoonup_{1} a)\prec b+(x\triangleleft_{1} a)\rightharpoonup_{2} b=x\rightharpoonup_{1}(a\prec b);\\
        (M4)\,\,&(a\leftharpoonup x)\succ b+(a\triangleright x)\rightharpoonup_{1} b=a\succ(x\rightharpoonup_{1} b)+a\leftharpoonup_{1}(x\triangleleft_{1} b);\\
        &(a\leftharpoonup_{2} x)\prec b+(a\triangleright_{2} x)\rightharpoonup_{2} b=a\prec (x\rightharpoonup b)+a\leftharpoonup_{2}(x\triangleleft b);\\&(a\leftharpoonup_{1} x)\prec b+(a\triangleright_{1} x)\rightharpoonup_{2} b=a\succ(x\rightharpoonup_{2} b)+a\leftharpoonup_{1}(x\triangleleft_{2} b);\\
        (M5)\,\,&(x\star_{V} y)\triangleleft_{1} a=x\triangleleft_{1}(y\rightharpoonup_{1} a)+x\succ_{V}(y\triangleleft_{1} a);\\
                &(x\prec_{V} y)\triangleleft_{2} a=x\triangleleft_{2}(y\rightharpoonup a)+x\prec_{V}(y\triangleleft a);\\
                &(x\succ_{V} y)\triangleleft_{2} a=x\triangleleft_{1}(y\rightharpoonup_{2} a)+x\succ_{V}(y\triangleleft_{2} a);\\
        (M6)\,\,&(a\leftharpoonup x)\triangleright_{1} y+(a\triangleright x)\succ_{V} y= a\triangleright_{1} (x\succ_{V} y);\\
                &(a\leftharpoonup_{2} x)\triangleright_{2} y+(a\triangleright_{2} x)\prec_{V} y= a\triangleright_{2} (x\star_{V} y);\\
                &(a\leftharpoonup_{1} x)\triangleright_{2} y+(a\triangleright_{1} x)\prec_{V} y= a\triangleright_{1} (x\prec_{V} y);\\
        (M7)\,\,&(x\rightharpoonup a)\triangleright_{1} y+(x\triangleleft a)\succ_{V} y=x\triangleleft_{1}(a\leftharpoonup_{1} y)+ x\succ_{V}(a\triangleright_{1} y);\\
        &(x\rightharpoonup_{2} a)\triangleright_{2} y+(x\triangleleft_{2} a)\prec_{V} y=x\triangleleft_{2}(a\leftharpoonup y)+ x\prec_{V}(a\triangleright y);\\
        &(x\rightharpoonup_{1} a)\triangleright_{2} y+(x\triangleleft_{1} a)\prec_{V} y=x\triangleleft_{1}(a\leftharpoonup_{2} y)+ x\succ_{V}(a\triangleright_{2} y).
    \end{align*}
    }
\end{defn}

\begin{prop}\label{prop:matched pair unified product}
The extending datum $\Omega(D,V)=\big(\triangleright_{1},\triangleright_{2},\triangleleft_{1},\triangleleft_{2},
\rightharpoonup_{1},\rightharpoonup_{2},\leftharpoonup_{1},\leftharpoonup_{2},\succ_{V},\prec_{V})$ is a dendriform extending structure of $D$ through $V$, if and only if, $(D,V,\triangleright_{1},\triangleright_{2},\triangleleft_{1},\triangleleft_{2},\rightharpoonup_{1},\rightharpoonup_{2},\leftharpoonup_{1},\leftharpoonup_{2})$ is a matched pair of dendriform algebras.
\end{prop}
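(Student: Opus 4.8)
The plan is to apply Theorem~\ref{thm:datum and unified product}: the extending datum $\Omega(D,V)=\big(\triangleright_{1},\triangleright_{2},\triangleleft_{1},\triangleleft_{2},\rightharpoonup_{1},\rightharpoonup_{2},\leftharpoonup_{1},\leftharpoonup_{2},\succ_{V},\prec_{V}\big)$ in question is a dendriform extending structure of $D$ through $V$ if and only if conditions (D1)--(D12) hold. Since the cocycles $f_{1},f_{2}$ have been dropped from $\Omega(D,V)$, they are trivial, so the first step is simply to substitute $f_{1}=f_{2}=0$ into each of (D1)--(D12) and record what remains.

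With $f_{1}=f_{2}=0$, the three identities of (D11) become $0=0$ and disappear, while the three identities of (D12) reduce exactly to Eqs.~\eqref{definition:dendriform:formula1}--\eqref{definition:dendriform:formula3} written for $\succ_{V},\prec_{V}$; hence (D12) is equivalent to $(V,\succ_{V},\prec_{V})$ being a dendriform algebra. Next, killing the cocycles in (D5) turns its three identities into the left-module axioms of Eq.~\eqref{formulas:left module definition} for the maps $\rightharpoonup_{1},\rightharpoonup_{2}\colon V\times D\to D$ over the algebra $(V,\star_{V})$; likewise (D7) becomes the right-module axioms of Eq.~\eqref{formulas:right module definition} for $\leftharpoonup_{1},\leftharpoonup_{2}$, and (D9) becomes the bimodule-compatibility axioms of Eq.~\eqref{formulas:bimodule definition}. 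Together these three say precisely that $(D,\rightharpoonup_{1},\rightharpoonup_{2},\leftharpoonup_{1},\leftharpoonup_{2})$ is a $V$ bimodule. Combined with (D1), which is literally the statement that $(V,\triangleright_{1},\triangleright_{2},\triangleleft_{1},\triangleleft_{2})$ is a $D$ bimodule, this is exactly (M1).

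For the remaining conditions no cocycle ever appears, so I would just match them verbatim: (D2), (D3), (D4), (D6), (D8), (D10) coincide with (M2), (M3), (M4), (M5), (M6), (M7) respectively. Putting the pieces together, the system (D1)--(D12) for the cocycle-free datum is equivalent to ``$(V,\succ_{V},\prec_{V})$ is a dendriform algebra and (M1)--(M7) hold'', which is by definition the assertion that $(D,V,\triangleright_{1},\triangleright_{2},\triangleleft_{1},\triangleleft_{2},\rightharpoonup_{1},\rightharpoonup_{2},\leftharpoonup_{1},\leftharpoonup_{2})$ is a matched pair of dendriform algebras (recall $D$ is already a dendriform algebra by hypothesis). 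I do not anticipate a genuine obstacle here; the only point requiring attention is the bookkeeping of roles in the module axioms of the preliminary section --- keeping track of which of $D$ and $V$ is the underlying algebra and which is the module --- so that (D5), (D7) and (D9) are correctly identified as the ``left'', ``right'' and ``compatibility'' layers of a $V$-bimodule structure on $D$.
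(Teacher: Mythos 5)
Your proposal is correct and follows exactly the paper's argument: the paper's proof is the one-line observation that setting $f_{1}=f_{2}=0$ in Theorem~\ref{thm:datum and unified product} makes (D1)--(D12) collapse to (M1)--(M7). Your more detailed bookkeeping --- (D11) becoming vacuous, (D12) giving the dendriform axioms on $V$, and (D5), (D7), (D9) giving the $V$-bimodule structure on $D$ --- is an accurate spelling-out of that same reduction.
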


\begin{proof}
In Theorem ~\ref{thm:datum and unified product}, if the maps $f_{1}$ and $f_{2}$ are trivial, then Eqs. (D1)-(D12) of the dendriform extending structure $\Omega(D,V)=\big(\triangleright_{1},\triangleright_{2},\triangleleft_{1},\triangleleft_{2},
\rightharpoonup_{1},\rightharpoonup_{2},\leftharpoonup_{1},\leftharpoonup_{2},\succ_{V},\prec_{V})$ are equivalent to Eqs. (M1)-(M7) of the associated matched pair $(D,V,\triangleright_{1},\triangleright_{2},\triangleleft_{1},\triangleleft_{2},\rightharpoonup_{1},\rightharpoonup_{2},\leftharpoonup_{1},\leftharpoonup_{2})$.
\end{proof}
\begin{exam} \label{exam:matched pair}
To continue Example~\ref{exam:ES problem}, define $e_{2}\succ_{V} e_{2}:=e_{2}$, $e_{2}\prec_{V} e_{2}:=0$ on $V={\bf k}\{e_{2}\}$. Then $(V,\succ_{V},\prec_{V})$ is a dendriform algebra by Example~\ref{exam:dendriform algebra}~\ref{exam:dendriform algebra1}.
By Table ~\ref{table:classifying flag datum} in Example~\ref{exam:ES problem}, we obtain all the matched pairs of $D$ and $V$ as follows: by Proposition ~\ref{prop:matched pair unified product} and Eqs. ~(\ref{formulas:flag and extending structure})~(\ref{formulas:flag and D}), collecting all the flag datums $(\bar{l}_{1},\bar{l}_{2},\bar{r}_{1},\bar{r}_{2},\bar{p}_{1},\bar{p}_{2},\bar{q}_{1},\bar{q}_{2},
  \bar{a}_{1},\bar{a}_{2},\bar{k}_{1},\bar{k}_{2})$ such that $\bar{a}_{1}=\bar{a}_{2}=\bar{k}_{2}=0$ and $\bar{k}_{1}=1$, then we obtain Table ~\ref{table:matched pairs} and each row in Table ~\ref{table:matched pairs} defines a matched pair $(D,V,\triangleright_{1},\triangleright_{2},\triangleleft_{1},\triangleleft_{2},$ $\rightharpoonup_{1},\rightharpoonup_{2},\leftharpoonup_{1},\leftharpoonup_{2})$.

\begin{table}[htbp]
  \centering
  \renewcommand\arraystretch{1.6}
\begin{tabular}{|c|c|c|c|c|c|c|c|c|c|c|c|c|c|c|c|c|c|c|c|c|c|c|c|c|c|}
  \hline
  $\bar{l}_{1}$ & $\bar{l}_{2}$ & $\bar{r}_{1}$ & $\bar{r}_{2}$ & $\bar{p}_{1}$ & $\bar{p}_{2}$ & $\bar{q}_{1}$ & $\bar{q}_{2}$ & $\bar{a}_{1}$ & $\bar{a}_{2}$ & $\bar{k}_{1}$ & $\bar{k}_{2}$ \\
  \hline
$1$ & $-1$ & $0$ & $0$ & $1$ & $0$ & $0$ & $1$  &\multirow{10}*{0}&\multirow{10}*{0}&\multirow{10}*{1}&\multirow{10}*{0}\\
  \cline{1-8}
$0$ & $0$ & $1$ & $0$ & $0$ & $0$ & $1$ & $0$&&&&\\
  \cline{1-8}
$1$ & $0$ & $0$ & $0$ & $1$ & $0$ & $0$ & $0$&&&&\\
  \cline{1-8}
$1$ & $0$ & $0$ & $1$ & $1$ & $-1$ & $0$ & $0$&&&&\\
  \cline{1-8}
$1$ & $-1$ & $0$ & $1$ & $1$ & $-1$ & $0$ & $1$&&&&\\
  \cline{1-8}
 $0$ & $0$ & $0$ & $0$ & $1$ & $0$ & $1$ & $0$&&&&\\
  \cline{1-8}
 $0$ & $0$ & $0$ & $0$ & $1$ & $0$ & $0$ & $1$&&&&\\
  \cline{1-8}
 $1$ & $0$ & $1$ & $0$ & $0$ & $0$ & $0$ & $0$&&&&\\
  \cline{1-8}
 $1$ & $0$ & $0$ & $1$ & $0$ & $0$ & $0$ & $0$&&&&\\
  \cline{1-8}
  $0$ & $0$ & $0$ & $0$ & $0$ & $0$ & $0$ & $0$&&&&\\
  \hline
\end{tabular}
\\[5pt]
  \caption{matched pairs of $D$ and $V$}\label{table:matched pairs}
\end{table}
\end{exam}
\allowdisplaybreaks{
\begin{defn}\label{defn:bicrossed product}
Let $\Omega(D,V)=\big(\triangleright_{1},\triangleright_{2},\triangleleft_{1},\triangleleft_{2},
\rightharpoonup_{1},\rightharpoonup_{2},\leftharpoonup_{1},\leftharpoonup_{2},\succ_{V},\prec_{V})$ be a dendriform extending structure of $D$ through $V$, the associated unified product $\dnv$ is called the {\bf bicrossed product} associated to the matched pair $(D,V,\triangleright_{1},\triangleright_{2},\triangleleft_{1},\triangleleft_{2},\rightharpoonup_{1},\rightharpoonup_{2},\leftharpoonup_{1},\leftharpoonup_{2})$, denoted by $D\bowtie V$.
\end{defn}
\begin{rem}\label{remark:bicrossed product}
According to Theorem ~\ref{thm:datum and unified product}, the bicrossed product $D\bowtie V$ consists of the vector space $D\times V$ together with a dendriform algebra structure, for $a,b\in D$ and $x,y\in V$, defined as follows:
\begin{align*}
(a,x)\succeq(b,y)&:=(a\succ b+a\leftharpoonup_{1} y+x\rightharpoonup_{1} b,\,a\triangleright_{1} y+x\triangleleft_{1} b+x\succ_{V} y);\\
(a,x)\preceq(b,y)&:=(a\prec b+a\leftharpoonup_{2} y+x\rightharpoonup_{2} b,\,a\triangleright_{2} y+x\triangleleft_{2} b+x\prec_{V} y).
\end{align*}
\end{rem}

Under the bijection map $\Upsilon_{1}$ (or map $\Upsilon_{2}$) in Proposition~\ref{prop:bijection of extension and unified product}, we have the following proposition.
\begin{prop}\label{prop:factor through matched pair}
Let $D$ and $V$ be two dendriform algebras, vector space $E=D+V$, $D\cap V=\{0\}$, and $\rho:E\rightarrow D$ the retraction associated to V. Suppose that $D\subset E $ is an extension of dendriform algebras, then the following conditions are equivalent:
\begin{enumerate}
\item\label{prop:factor through matched pair1} the dendriform algebra $E$ factorizes through $D$ and $V$.
\item\label{prop:factor through matched pair2} the unified product corresponding to the dendriform extending structure $\Omega(D,V)=\Upsilon_{1}(E)$ is a bicrossed product $D\bowtie V$.
\end{enumerate}
\end{prop}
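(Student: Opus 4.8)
The plan is to reduce the whole statement to a single elementary observation: under the bijection $\Upsilon_{1}$, condition \ref{prop:factor through matched pair2} amounts to the vanishing of the two cocycles $f_{1},f_{2}$ of $\Omega(D,V)=\Upsilon_{1}(E)$, and this vanishing is in turn equivalent to condition \ref{prop:factor through matched pair1}. First I would unwind ``$E$ factorizes through $D$ and $V$'': by hypothesis $D\subset E$ is already a subalgebra and we are given $E=D+V$ and $D\cap V=\{0\}$, so condition \ref{prop:factor through matched pair1} is equivalent to the single requirement that $V$ be a subalgebra of $E$, i.e. $x\succ_{E}y\in V$ and $x\prec_{E}y\in V$ for all $x,y\in V$.

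Next I would translate this requirement through the retraction $\rho$ associated to $V$. Since $V=\ker\rho$, the condition $x\succ_{E}y\in V$ is the same as $\rho(x\succ_{E}y)=0$, and likewise for $\prec_{E}$. But by the very definition of $\Upsilon_{1}$ in Eq.~\eqref{formulas:extension to unified} one has $f_{1}(x,y)=\rho(x\succ_{E}y)$ and $f_{2}(x,y)=\rho(x\prec_{E}y)$; hence $V$ is a subalgebra of $E$ precisely when $f_{1}=f_{2}=0$. (Alternatively, one may argue on the unified-product side: under the dendriform isomorphism $\varphi\colon\dnv\to\cee$, $\varphi(a,x)=a+x$, of Theorem~\ref{thm:extension to unified}, the subspace $\{0\}\times V$ corresponds to $V\subset E$, and by Eq.~\eqref{formulas:unified product} this subspace is closed under $\succeq$ and $\preceq$ if and only if $f_{1}(x,y)=f_{2}(x,y)=0$ for all $x,y\in V$; this is the formulation I would actually write down.)

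It then remains to identify ``$\Omega(D,V)$ has trivial cocycles'' with ``$\dnv$ is a bicrossed product''. By Definition~\ref{defn:bicrossed product} a bicrossed product is exactly the unified product attached to a matched pair, i.e. the one coming from a dendriform extending structure of the special form $\big(\triangleright_{1},\triangleright_{2},\triangleleft_{1},\triangleleft_{2},\rightharpoonup_{1},\rightharpoonup_{2},\leftharpoonup_{1},\leftharpoonup_{2},\succ_{V},\prec_{V}\big)$, and Proposition~\ref{prop:matched pair unified product} says precisely that an extending datum with $f_{1}=f_{2}=0$ is a dendriform extending structure if and only if the underlying system is a matched pair of dendriform algebras. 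Chaining the two equivalences yields \ref{prop:factor through matched pair1}$\Leftrightarrow$\ref{prop:factor through matched pair2}, and in that case $\dnv=D\bowtie V$.

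I do not expect a real obstacle here; the argument is essentially bookkeeping around the formula defining $\Upsilon_{1}$. The one point that warrants care is the backward direction: from $f_{1}=f_{2}=0$ one must know that the leftover maps $\succ_{V},\prec_{V}$ genuinely make $V$ a dendriform algebra and that the eight remaining actions satisfy the matched-pair axioms (M1)--(M7), rather than being merely residual data — but this is exactly the content of Proposition~\ref{prop:matched pair unified product}, so nothing new has to be established, and the proof should be short.
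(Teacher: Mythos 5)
Your proposal is correct and follows essentially the same route as the paper: both directions reduce to the observation that, under $\Upsilon_{1}$, $f_{1}(x,y)=\rho(x\succ_{E}y)$ and $f_{2}(x,y)=\rho(x\prec_{E}y)$, so $V=\ker\rho$ is a subalgebra of $E$ exactly when the cocycles vanish, after which Proposition~\ref{prop:matched pair unified product} and Definition~\ref{defn:bicrossed product} identify the resulting unified product as the bicrossed product. The paper phrases the converse via Eq.~\eqref{formulas:unified to extension} (giving $x\succ_{E}y=x\succ_{V}y$, $x\prec_{E}y=x\prec_{V}y$), which is the same bookkeeping you describe.
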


\begin{proof}
First, we prove that ~\ref{prop:factor through matched pair1} $\Rightarrow$ ~\ref{prop:factor through matched pair2}.
Since $V$ is a subalgebra of $E$, for all $x,y\in V$ and by Eq. ~(\ref{formulas:extension to unified}), we have
\begin{align*}
f_{1}(x,y)&=\rho(x\succ_{E} y)=\rho(x\succ_{V} y)=0,\text{ i.e., $f_{1}$ is trivial};\\
f_{2}(x,y)&=\rho(x\prec_{E} y)=\rho(x\prec_{V} y)=0,\text{ i.e., $f_{2}$ is trivial}.
\end{align*}
According to Proposition ~\ref{prop:matched pair unified product} and Definition ~\ref{defn:bicrossed product}, the corresponding unified product is a bicrossed product $D\bowtie V$.

Second, we prove that ~\ref{prop:factor through matched pair2} $\Rightarrow$ ~\ref{prop:factor through matched pair1}. Here we just need to prove that $V$ is a subalgebra of $E$, by Eq. ~(\ref{formulas:unified to extension}), we know that $x\succ_{E} y=x\succ_{V} y$, $x\prec_{E} y=x\prec_{V} y$, $x,y\in V$, so $V$ is a subalgebra of $E$.
\end{proof}
}


\begin{exam}\label{exam:matched pairs2}
To continue Example~\ref{exam:ES problem}, define $e_{2}\succ_{V} e_{2}:=0$ and $e_{2}\prec_{V} e_{2}:=e_{2}$ on $V={\bf k}\{e_{2}\}$. Then $(V,\succ_{V},\prec_{V})$ is a dendriform algebra by Example~\ref{exam:dendriform algebra}~\ref{exam:dendriform algebra2}. Similar to Example ~\ref{exam:matched pair}, we obtain all the matched pairs of $D$ and $V$ in Table ~\ref{table:matched pairs2}. Since each flag datum $(\bar{l}_{1},\bar{l}_{2},\bar{r}_{1},\bar{r}_{2},\bar{p}_{1},\bar{p}_{2},\bar{q}_{1},\bar{q}_{2},
  \bar{a}_{1},\bar{a}_{2},$ $\bar{k}_{1},\bar{k}_{2})$ comes from different equivalent classes, these matched pairs are not equivalent to each other.
\begin{table}[H]
  \centering
  \renewcommand\arraystretch{1.6}
\begin{tabular}{|c|c|c|c|c|c|c|c|c|c|c|c|c|c|c|c|c|c|c|c|c|c|c|c|c|c|}
  \hline
  $\bar{l}_{1}$ & $\bar{l}_{2}$ & $\bar{r}_{1}$ & $\bar{r}_{2}$ & $\bar{p}_{1}$ & $\bar{p}_{2}$ & $\bar{q}_{1}$ & $\bar{q}_{2}$ & $\bar{a}_{1}$ & $\bar{a}_{2}$ & $\bar{k}_{1}$ & $\bar{k}_{2}$ \\
  \hline
$1$ & $-1$ & $0$ & $0$ & $0$ & $0$ & $0$ & $0$  &\multirow{3}*{0}&\multirow{3}*{0}&\multirow{3}*{0}&\multirow{3}*{1}\\
  \cline{1-8}
$1$ & $0$ & $0$ & $1$ & $0$ & $0$ & $0$ & $0$&&&&\\
  \cline{1-8}
$0$ & $0$ & $0$ & $0$ & $0$ & $0$ & $0$ & $0$&&&&\\
  \hline
\end{tabular}
  \caption{matched pairs of $D$ and $V$}\label{table:matched pairs2}
\end{table}
\noindent By Eq.~(\ref{formulas:flag to extension}), for each row, we can obtain an extension $\cee$ on $E={\bf k}\{e_{1},e_{2}\}$. By Proposition~\ref{prop:factor through matched pair}, we obtain all the dendriform algebras $\cee$ that factorizes through $D$ and $V$.

\end{exam}
\subsection{Cocycle semidirect products of dendriform algebras}
In this subsection, we mainly consider cocycle semidirect systems and cocycle semidirect products, which are special cases of dendriform extending structures and unified products, respectively.

\begin{defn}
A {\bf cocycle semidirect system} of dendriform algebras is a system $(D,V,\triangleright_{1},\triangleright_{2},\triangleleft_{1}$, $\triangleleft_{2},f_{1},f_{2},\succ_{V},\prec_{V})$ consisting of a dendriform algebra $D$, a $D$ bimodule $\big(V,\triangleright_{1},\triangleright_{2},\triangleleft_{1},
 \triangleleft_{2}\big)$, and four bilinear maps:
\begin {align*}
f_{1},f_{2}:\,V\times V\rightarrow D,\,\,\,\,\,\,\succ_{V},\prec_{V}:\,V\times V\rightarrow V,
\end{align*}
satisfying the following conditions for all $a\in D$ and $x,y,z\in V$:
\allowdisplaybreaks{
         \begin{align*}
        (C1)\,\,&f(x,y)\succ a=f_{1}\big( x,y\triangleleft_{1} a\big);\,\,\,\,
        f_{2}(x,y)\prec a=f_{2}\big( x,y\triangleleft a\big);\,\,\,\,
        f_{1}(x,y)\prec a=f_{1}\big( x,y\triangleleft_{2} a\big);\\
        (C2)\,\,&(x\star_{V} y)\triangleleft_{1} a=x\succ_{V}(y\triangleleft_{1} a);\,\,\,\,
                (x\prec_{V} y)\triangleleft_{2} a=x\prec_{V}(y\triangleleft a);\,\,\,\,
                (x\succ_{V} y)\triangleleft_{2} a=x\succ_{V}(y\triangleleft_{2} a);\\
        (C3)\,\,&f_{1}\big(a\triangleright x, y\big)= a\succ f_{1}(x,y);\,\,\,\,
                f_{2}\big(a\triangleright_{2} x, y\big)= a\prec f(x,y);\,\,\,\,                     f_{2}\big(a\triangleright_{1} x, y\big)= a\succ f_{2}(x,y);\\
        (C4)\,\,&(a\triangleright x)\succ_{V} y= a\triangleright_{1} (x\succ_{V} y);\,\,\,\,
                (a\triangleright_{2} x)\prec_{V} y= a\triangleright_{2} (x\star_{V} y);\,\,\,\,
                (a\triangleright_{1} x)\prec_{V} y= a\triangleright_{1} (x\prec_{V} y);\\
        (C5)\,\,&f_{1}\big(x\triangleleft a,y\big)=f_{1}\big(x,a\triangleright_{1} y\big);\,\,\,\,
               f_{2}\big(x\triangleleft_{2} a,y\big)=f_{2}\big(x,a\triangleright y\big);\,\,\,\,
        f_{2}\big(x\triangleleft_{1} a,y\big)=f_{1}\big(x,a\triangleright_{2} y\big);\\
        (C6)\,&(x\triangleleft a)\succ_{V} y=x\succ_{V}(a\triangleright_{1} y);\,\,\,\,
        (x\triangleleft_{2} a)\prec_{V} y=x\prec_{V}(a\triangleright y);\,\,\,\,
        (x\triangleleft_{1} a)\prec_{V} y=x\succ_{V}(a\triangleright_{2} y);\\
        (C7)\,&f_{1}\big(x\star_{V} y,z\big)=f_{1}\big(x,y\succ_{V} z\big);\,\,\,\,
        f_{2}\big(x\prec_{V} y,z\big)=f_{2}\big(x,y\star_{V} z\big);\,\,\,\,
        f_{2}\big(x\succ_{V} y,z\big)=f_{1}\big(x,y\prec_{V} z\big);\\
        (C8)\,&f(x,y)\triangleright_{1} z=x\triangleleft_{1} f_{1}(y,z)+x\succ_{V} (y\succ_{V} z)-(x\star_{V} y)\succ_{V} z; \\
        &f_{2}(x,y)\triangleright_{2} z=x\triangleleft_{2} f(y,z)+x\prec_{V} (y\star_{V} z)-(x\prec_{V} y)\prec_{V} z;\\
        &f_{1}(x,y)\triangleright_{2} z=x\triangleleft_{1} f_{2}(y,z)+x\succ_{V} (y\prec_{V} z)-(x\succ_{V} y)\prec_{V} z.
    \end{align*}
    }
\end{defn}
\begin{exam}
   To continue Example~\ref{exam:dendriform algebra}~\ref{exam:dendriform algebra1} and suppose $V={\bf k}\{e_{2}\}$, define eight bilinear maps as follows:
\begin{align*}
\begin{aligned}
e_{1}\triangleright_{1}e_{2}&=e_{2},\\
f_{1}(e_{2},e_{2})&=e_{1},\\
\end{aligned}\,\,\,\,\,\,\,
\begin{aligned}
e_{1}\triangleright_{2}e_{2}&=0,\\
f_{2}(e_{2},e_{2})&=0,\\
\end{aligned}\,\,\,\,\,\,\,
\begin{aligned}
e_{2}\triangleleft_{1}e_{1}&=e_{2},\\
e_{2}\succ_{V} e_{2}&=2e_{2},\\
\end{aligned}\,\,\,\,\,\,\,
\begin{aligned}
e_{2}\triangleleft_{2}e_{1}&=0,\\
e_{2}\prec_{V} e_{2}&=0.
\end{aligned}
\end{align*}
By directly computing, the system $(D,V,\triangleright_{1},\triangleright_{2},\triangleleft_{1}$, $\triangleleft_{2},f_{1},f_{2},\succ_{V},\prec_{V})$ is a cocycle semidirect product of dendriform algebras.

\end{exam}

\begin{prop}
\label{cor:system and cocycle semidirect product}
 The extending datum $\Omega(D,V)=\big(\triangleright_{1},\triangleright_{2},\triangleleft_{1},
 \triangleleft_{2},f_{1},f_{2},\succ_{V},\prec_{V})$ is a dendriform extending structure of $D$ through $V$, if and only if, $(D,V,\triangleright_{1},\triangleright_{2},\triangleleft_{1},
 \triangleleft_{2},f_{1},f_{2},\succ_{V},\prec_{V})$ is a cocycle semidirect system of dendriform algebras.
\end{prop}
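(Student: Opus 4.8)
The plan is to specialize Theorem~\ref{thm:datum and unified product} to the extending datum in which the eight mixed actions $\rightharpoonup_{1},\rightharpoonup_{2},\leftharpoonup_{1},\leftharpoonup_{2}$ all vanish, exactly as was done for matched pairs in Proposition~\ref{prop:matched pair unified product}. By Definition~\ref{defn:unified product} together with Theorem~\ref{thm:datum and unified product}, the extending datum $\Omega(D,V)=\big(\triangleright_{1},\triangleright_{2},\triangleleft_{1},\triangleleft_{2},f_{1},f_{2},\succ_{V},\prec_{V}\big)$ is a dendriform extending structure of $D$ through $V$ if and only if conditions (D1)--(D12) hold, so all I need to do is record what each of (D1)--(D12) becomes once the $\rightharpoonup_{i}$ and $\leftharpoonup_{i}$ are set to $0$ (which by Eq.~\eqref{formulas:dendriform to associative} also forces $\rightharpoonup=\rightharpoonup_{1}+\rightharpoonup_{2}=0$ and $\leftharpoonup=\leftharpoonup_{1}+\leftharpoonup_{2}=0$).

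First I would observe that (D1) is literally unchanged: it states that $\big(V,\triangleright_{1},\triangleright_{2},\triangleleft_{1},\triangleleft_{2}\big)$ is a $D$ bimodule, which is precisely the bimodule requirement built into the notion of a cocycle semidirect system. Next, every term appearing in the three identities of (D2), in the three identities of (D3), and in the three identities of (D4) contains one of the now-trivial maps $\leftharpoonup_{i}$ or $\rightharpoonup_{i}$, so all of (D2)--(D4) collapse to $0=0$ and impose no condition. Finally, in (D5)--(D12) I would delete the terms carrying a $\leftharpoonup_{i}$ or $\rightharpoonup_{i}$ and read off what survives: (D5) becomes (C1), (D6) becomes (C2), (D7) becomes (C3), (D8) becomes (C4), (D9) becomes (C5), (D10) becomes (C6), (D11) becomes (C7), and (D12), after transposing the term $(x\star_{V}y)\succ_{V}z$ (respectively $(x\prec_{V}y)\prec_{V}z$ and $(x\succ_{V}y)\prec_{V}z$) to the right-hand side, becomes (C8).

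Combining these observations: under the assumption that $\rightharpoonup_{1},\rightharpoonup_{2},\leftharpoonup_{1},\leftharpoonup_{2}$ are trivial, conditions (D1)--(D12) hold if and only if $\big(V,\triangleright_{1},\triangleright_{2},\triangleleft_{1},\triangleleft_{2}\big)$ is a $D$ bimodule and (C1)--(C8) hold, that is, if and only if $(D,V,\triangleright_{1},\triangleright_{2},\triangleleft_{1},\triangleleft_{2},f_{1},f_{2},\succ_{V},\prec_{V})$ is a cocycle semidirect system of dendriform algebras, which is the assertion. I do not expect a genuine obstacle here; the only point requiring care is the bookkeeping in the second identity of each of (D5)--(D11), where the abbreviations $f=f_{1}+f_{2}$, $\triangleright=\triangleright_{1}+\triangleright_{2}$, $\triangleleft=\triangleleft_{1}+\triangleleft_{2}$ and $\star_{V}=\succ_{V}+\prec_{V}$ from Eq.~\eqref{formulas:dendriform to associative} must be expanded consistently so that the surviving terms match (C1)--(C8) verbatim; this verification is entirely mechanical.
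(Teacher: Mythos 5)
Your proposal is correct and is essentially the paper's own proof: the paper also just specializes Theorem~\ref{thm:datum and unified product} to the case where $\rightharpoonup_{1},\rightharpoonup_{2},\leftharpoonup_{1},\leftharpoonup_{2}$ are trivial, observing that (D1)--(D12) then reduce to the $D$-bimodule condition together with (C1)--(C8). Your version merely writes out the term-by-term bookkeeping that the paper leaves implicit, and that bookkeeping is accurate.
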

\begin{proof}
It's obtained directly by Theorem ~\ref{thm:datum and unified product} if the maps $\rightharpoonup_{1},\rightharpoonup_{2},\leftharpoonup_{1}$ and $\leftharpoonup_{2}$ are all trivial.
\end{proof}
\begin{defn}Let $\Omega(D,V)=\big(\triangleright_{1},\triangleright_{2},\triangleleft_{1},
 \triangleleft_{2},f_{1},f_{2},\succ_{V},\prec_{V})$ be a dendriform extending structure of $D$ through $V$. The associated unified product $\dnv$ is called the {\bf cocycle semidirect product} associated to the cocycle semidirect system $(D,V,\triangleright_{1},\triangleright_{2},\triangleleft_{1}$, $\triangleleft_{2},f_{1},f_{2},\succ_{V},\prec_{V})$, denoted by $D\propto^{f} V$.
\end{defn}
\begin{rem}
According to Theorem ~\ref{thm:datum and unified product}, the cocycle semidirect product $D\propto^{f} V$ consists of the vector space $D\times V$, together with a dendriform algebra structure defined as follows with $a,b\in D$ and $x,y\in V$,
\allowdisplaybreaks{
\begin{align*}
(a,x)\succeq(b,y)&:=\big(a\succ b+f_{1}(x,y),a\triangleright_{1} y+x\triangleleft_{1} b+x\succ_{V} y\big);\\
(a,x)\preceq(b,y)&:=\big(a\prec b+f_{2}(x,y),a\triangleright_{2} y+x\triangleleft_{2} b+x\prec_{V} y\big).
\end{align*}
}
\end{rem}

By Theorem ~\ref{thm:datum and unified product}, we know that $\big(V,\triangleright_{1},\triangleright_{2},\triangleleft_{1},\triangleleft_{2}\big)$ is a $D$ bimodule and $D$ is a regular module. Then $E$ has a natural $D$ bimodule structure. Applying the bijection map $\Upsilon_{1}$ (or map $\Upsilon_{2}$) in Proposition~\ref{prop:bijection of extension and unified product}, we have the following proposition.

\begin{prop}\label{prop:left module morphism retraction}
Let $D\subset E $ be an extension of dendriform algebras with a retraction $\rho:E\rightarrow D$, and $V=\mathrm{ker}\,\rho$.
Then the following conditions are equivalent.
\begin{enumerate}
\item\label{prop:left module morphism retraction1} The map $\rho$ is a left (resp. right) $D$ module morphism.
\item\label{prop:left module morphism retraction2} The maps $\leftharpoonup_{1}$ and $\leftharpoonup_{2}$ (resp. $\rightharpoonup_{1}$ and $\rightharpoonup_{2}$) of the dendriform extending structure $\Omega(D,V)$ defined by $\Upsilon_{1}(E)$ are both trivial.
\end{enumerate}
\end{prop}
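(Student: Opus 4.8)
The plan is to unwind the definitions on both sides and observe that the equivalence is essentially tautological once the ambient $D$-bimodule structures are pinned down; the only genuine content is the decomposition $E=D\oplus V$ and the fact that $D$ is a subalgebra of $E$. So first I would fix conventions: since $(E,\succ_{E},\prec_{E})$ is a dendriform algebra, restricting its left regular module along the inclusion $i\colon D\hookrightarrow E$ makes $E$ a left $D$-module with $\triangleright_{1}=\succ_{E}$ and $\triangleright_{2}=\prec_{E}$ (first argument taken in $D$), while $D$ itself is the left regular $D$-module with $\triangleright_{1}'=\succ$, $\triangleright_{2}'=\prec$. Hence $\rho\colon E\to D$ is a left $D$-module morphism exactly when
\[
\rho(a\succ_{E}u)=a\succ\rho(u),\qquad \rho(a\prec_{E}u)=a\prec\rho(u)
\]
for all $a\in D$ and $u\in E$. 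On the other hand, by Eq.~\eqref{formulas:extension to unified} the dendriform extending structure $\Omega(D,V)=\Upsilon_{1}(E)$ satisfies $a\leftharpoonup_{1}x=\rho(a\succ_{E}x)$ and $a\leftharpoonup_{2}x=\rho(a\prec_{E}x)$ for $a\in D$, $x\in V$, and I would also record the facts $\rho i=\mathrm{Id}_{D}$, $V=\ker\rho$, $\succ_{E}|_{D}=\succ$, $\prec_{E}|_{D}=\prec$ used in the proof of Theorem~\ref{thm:extension to unified}.

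For the implication \ref{prop:left module morphism retraction2}$\Rightarrow$\ref{prop:left module morphism retraction1}, I would take an arbitrary $u\in E$ and write $u=\rho(u)+x$ with $x:=u-\rho(u)\in V$. Since $D$ is a subalgebra, $a\succ_{E}\rho(u)=a\succ\rho(u)\in D$, so $\rho(a\succ_{E}\rho(u))=a\succ\rho(u)$; and $\rho(a\succ_{E}x)=a\leftharpoonup_{1}x=0$ by hypothesis. Adding these and using linearity of $\rho$ gives $\rho(a\succ_{E}u)=a\succ\rho(u)$, and the $\prec_{E}$-identity is obtained the same way from triviality of $\leftharpoonup_{2}$; hence $\rho$ is a left $D$-module morphism. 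For \ref{prop:left module morphism retraction1}$\Rightarrow$\ref{prop:left module morphism retraction2}, for $x\in V$ one has $\rho(x)=0$, so applying the module-morphism identities with $u=x$ yields $a\leftharpoonup_{1}x=\rho(a\succ_{E}x)=a\succ 0=0$ and $a\leftharpoonup_{2}x=\rho(a\prec_{E}x)=a\prec 0=0$, i.e.\ both $\leftharpoonup_{1}$ and $\leftharpoonup_{2}$ are trivial.

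The right-module case is proved \emph{mutatis mutandis}: one restricts the right regular module of $E$ along $i$, uses the right module-morphism identities $\rho(u\succ_{E}b)=\rho(u)\succ b$ and $\rho(u\prec_{E}b)=\rho(u)\prec b$, and invokes $x\rightharpoonup_{1}a=\rho(x\succ_{E}a)$, $x\rightharpoonup_{2}a=\rho(x\prec_{E}a)$ from Eq.~\eqref{formulas:extension to unified} in place of the $\leftharpoonup$-formulas. I do not expect any real obstacle here; the only point that needs care is the bookkeeping at the start — making explicit which regular module is being restricted and what a morphism of dendriform $D$-bimodules means — after which both directions are one-line computations.
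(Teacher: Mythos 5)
Your proof is correct and follows essentially the same route as the paper: both arguments reduce the module-morphism condition to the single term $a\leftharpoonup_{1}x$ (resp.\ $a\leftharpoonup_{2}x$) via the decomposition $E=D\oplus V$ and the defining relationship between $\Omega(D,V)$ and $(\succ_{E},\prec_{E})$. The only cosmetic difference is that you invoke Eq.~\eqref{formulas:extension to unified} ($a\leftharpoonup_{1}x=\rho(a\succ_{E}x)$, etc.) and treat the two implications separately, whereas the paper expands $\rho\big(b\succ_{E}(a+x)\big)-b\succ\rho(a+x)$ in one stroke using Eq.~\eqref{formulas:unified to extension}; these are the same computation read through the bijection $\Upsilon_{1}$.
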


%

%

\begin{proof}
The linear retraction $\rho$ is a left $D$ module morphism if and only if the following Eq. ~(\ref{zformulas(not):module morphism in prop}) holds for all $a,b\in D$ and $x\in V$,
\allowdisplaybreaks{
\begin{align}\label{zformulas(not):module morphism in prop}
\begin{split}
    \rho\big(b\succ_{E}(a+x)\big)&=b\succ\, \rho(a+x);\\
    \rho\big(b\prec_{E}(a+x)\big)&=b\prec\, \rho(a+x).
\end{split}
\end{align}
By Eq. ~(\ref{formulas:unified to extension}), we have
\begin{align*}
    0=&\rho\big(b\succ_{E}(a+x)\big)-b\succ\,\rho(a+x)\\
    =&\rho(b\succ a+b\leftharpoonup_{1} x+b\triangleright_{1} x)-b\succ a\\
    =&b\succ a+b\leftharpoonup_{1} x-b\succ a\\
    =&b\leftharpoonup_{1} x;\\
    0=&\rho\big(b\prec_{E}(a+x)\big)-b\prec\,\rho(a+x)\\
    =&\rho(b\prec a+b\leftharpoonup_{2} x+b\triangleright_{2} x)-b\prec a\\
    =&b\prec a+b\leftharpoonup_{2} x-b\prec a\\
    =&b\leftharpoonup_{2} x.
    \end{align*}
}Hence the linear retraction $\rho$ is a left $D$ module morphism if and only if the maps $\leftharpoonup_{1}$ and $\leftharpoonup_{2}$ are both trivial. The right case can be similarly proved.
    \end{proof}

\begin{cor}\label{cor:bimodule retraction}
Let $D\subset E $ be an extension of dendriform algebras with a retraction $\rho:E\rightarrow D$, and $V=\mathrm{ker}\,\rho$.
Then the following conditions are equivalent.
\begin{enumerate}
\item The map $\rho$ is a $D$ bimodule morphism.
\item The unified product corresponding to the dendriform extending structure $\Omega(D,V)=\Upsilon_{1}(E)$ is a cocycle semidirect product $D\propto^{f} V$.
\end{enumerate}
\end{cor}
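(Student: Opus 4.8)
The plan is to deduce the statement purely by assembling two facts already established in the paper: Proposition~\ref{prop:left module morphism retraction} (used in both its left and right versions) and the characterization of cocycle semidirect products given by Proposition~\ref{cor:system and cocycle semidirect product}.

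First I would unwind condition (1). By definition, a $D$ bimodule morphism is exactly a linear map that is simultaneously a left $D$ module morphism and a right $D$ module morphism, so $\rho:E\to D$ is a $D$ bimodule morphism if and only if it is both. Applying Proposition~\ref{prop:left module morphism retraction} in its left-module form shows that $\rho$ is a left $D$ module morphism precisely when the maps $\leftharpoonup_{1}$ and $\leftharpoonup_{2}$ of the dendriform extending structure $\Omega(D,V)=\Upsilon_{1}(E)$ are trivial; applying it in its right-module form shows that $\rho$ is a right $D$ module morphism precisely when $\rightharpoonup_{1}$ and $\rightharpoonup_{2}$ are trivial. Combining these two equivalences, condition (1) holds if and only if all four maps $\rightharpoonup_{1},\rightharpoonup_{2},\leftharpoonup_{1},\leftharpoonup_{2}$ of $\Upsilon_{1}(E)$ vanish.

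Next I would unwind condition (2). By the definition of the cocycle semidirect product $D\propto^{f}V$, the unified product $\dnv$ attached to a dendriform extending structure is a cocycle semidirect product exactly when its extending datum reduces to the shape $\big(\triangleright_{1},\triangleright_{2},\triangleleft_{1},\triangleleft_{2},f_{1},f_{2},\succ_{V},\prec_{V}\big)$, that is, exactly when $\rightharpoonup_{1},\rightharpoonup_{2},\leftharpoonup_{1},\leftharpoonup_{2}$ are all trivial; in that situation Proposition~\ref{cor:system and cocycle semidirect product} guarantees that the truncated datum $(D,V,\triangleright_{1},\triangleright_{2},\triangleleft_{1},\triangleleft_{2},f_{1},f_{2},\succ_{V},\prec_{V})$ is automatically a cocycle semidirect system, so no further conditions need to be checked. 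Comparing this with the reformulation of (1) obtained above gives $(1)\Leftrightarrow(2)$.

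There is no serious obstacle in this argument: the corollary is a bookkeeping consequence of Proposition~\ref{prop:left module morphism retraction} and the earlier description of cocycle semidirect systems. The only points requiring a little care are to split "$\rho$ is a bimodule morphism" into its two one-sided halves before invoking the one-sided proposition, and to note explicitly that once the harpoon maps are trivial, the remaining data form a cocycle semidirect system by Proposition~\ref{cor:system and cocycle semidirect product}, so the passage from the vanishing of those maps to "$\dnv$ is a cocycle semidirect product" is indeed free.
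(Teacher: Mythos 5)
Your proposal is correct and follows exactly the route the paper takes: its proof consists of citing Proposition~\ref{prop:left module morphism retraction} (in both its left and right versions) together with Proposition~\ref{cor:system and cocycle semidirect product}, which is precisely the bookkeeping you spell out.
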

\begin{proof}
The corollary can be obtained by Proposition ~\ref{prop:left module morphism retraction} and Proposition~\ref{cor:system and cocycle semidirect product}.
\end{proof}

\subsection{Nonabelian semidirect products of dendriform algebras}
In this subsection, we consider nonabelian semidirect systems and nonabelian semidirect products, which are special cases of dendriform extending structures and unified products, respectively.

\begin{defn}
A {\bf nonabelian semidirect system} of dendriform algebras is a system $(D,V,\triangleright_{1}$, $\triangleright_{2},\triangleleft_{1},\triangleleft_{2},\succ_{V},\prec_{V})$ consisting of two dendriform algebras $D$ and $V$, and four bilinear maps
\begin {align*}
\triangleleft_{1},\triangleleft_{2}:\,V\times D\rightarrow V,\,\,\,\,\,\,\triangleright_{1},\triangleright_{2}:\,D\times V\rightarrow V
\end{align*}
such that $\big(V,\triangleright_{1},\triangleright_{2},\triangleleft_{1},\triangleleft_{2}\big)$ is a $D$ bimodule, and the following conditions hold for all $a\in D$ and $x,y\in V$,
\allowdisplaybreaks{
    \begin{align*}
        (S1)\,\,&(x\star_{V} y)\triangleleft_{1} a=x\succ_{V}(y\triangleleft_{1} a);\,\,\,\,
                (x\prec_{V} y)\triangleleft_{2} a=x\prec_{V}(y\triangleleft a);\,\,\,\,
                (x\succ_{V} y)\triangleleft_{2} a=x\succ_{V}(y\triangleleft_{2} a);\\
        (S2)\,\,&(a\triangleright x)\succ_{V} y= a\triangleright_{1} (x\succ_{V} y);\,\,\,\,
                (a\triangleright_{2} x)\prec_{V} y= a\triangleright_{2} (x\star_{V} y);\,\,\,\,
                (a\triangleright_{1} x)\prec_{V} y= a\triangleright_{1} (x\prec_{V} y);\\
        (S3)\,\,&(x\triangleleft a)\succ_{V} y=x\succ_{V}(a\triangleright_{1} y);\,\,\,\,
        (x\triangleleft_{2} a)\prec_{V} y=x\prec_{V}(a\triangleright y);\,\,\,\,
        (x\triangleleft_{1} a)\prec_{V} y=x\succ_{V}(a\triangleright_{2} y).
    \end{align*}
    }
\end{defn}

\begin{prop}\label{cor:system and semidirect product}
The extending datum $\Omega(D,V)=(\triangleright_{1},\triangleright_{2},\triangleleft_{1},\triangleleft_{2},\succ_{V},\prec_{V})$ is a dendriform extending structure of $D$ through $V$, if and only if, $(D,V,\triangleright_{1},\triangleright_{2},\triangleleft_{1},\triangleleft_{2},\succ_{V},\prec_{V})$ is a nonabelian semidirect system of dendriform algebras.
\end{prop}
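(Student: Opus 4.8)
The plan is to specialize Theorem~\ref{thm:datum and unified product} to the situation in which the maps $\rightharpoonup_{1},\rightharpoonup_{2},\leftharpoonup_{1},\leftharpoonup_{2}$ and $f_{1},f_{2}$ are all trivial, in the same spirit as the proofs of Proposition~\ref{prop:matched pair unified product} (matched pairs) and Proposition~\ref{cor:system and cocycle semidirect product} (cocycle semidirect systems). Under this hypothesis the extending datum $\Omega(D,V)=(\triangleright_{1},\triangleright_{2},\triangleleft_{1},\triangleleft_{2},\succ_{V},\prec_{V})$ is a dendriform extending structure precisely when conditions (D1)--(D12) hold, and the whole argument consists of reading off what each of them becomes once all the $\rightharpoonup$-, $\leftharpoonup$- and $f$-terms are set to zero.

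First I would note that (D2)--(D5), (D7), (D9) and (D11) each involve only the maps $\leftharpoonup$, $\rightharpoonup$ or $f$ (paired with the actions), so after the specialization every term in each of these identities vanishes and they hold trivially. Condition (D1) is by definition the statement that $(V,\triangleright_{1},\triangleright_{2},\triangleleft_{1},\triangleleft_{2})$ is a $D$ bimodule. Deleting the $\rightharpoonup$-terms from (D6), the $\leftharpoonup$-terms from (D8), and both the $\rightharpoonup$- and $\leftharpoonup$-terms from (D10), one checks directly that these three triples of identities collapse to exactly (S1), (S2) and (S3) respectively. Finally, erasing the cocycle terms from (D12) leaves precisely the three identities $(x\star_{V}y)\succ_{V}z=x\succ_{V}(y\succ_{V}z)$, $(x\prec_{V}y)\prec_{V}z=x\prec_{V}(y\star_{V}z)$ and $(x\succ_{V}y)\prec_{V}z=x\succ_{V}(y\prec_{V}z)$, which are the dendriform axioms \eqref{definition:dendriform:formula1}--\eqref{definition:dendriform:formula3} for $(V,\succ_{V},\prec_{V})$.

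Assembling these observations, $\Omega(D,V)=(\triangleright_{1},\triangleright_{2},\triangleleft_{1},\triangleleft_{2},\succ_{V},\prec_{V})$ is a dendriform extending structure of $D$ through $V$ if and only if $(V,\succ_{V},\prec_{V})$ is a dendriform algebra, $(V,\triangleright_{1},\triangleright_{2},\triangleleft_{1},\triangleleft_{2})$ is a $D$ bimodule, and (S1)--(S3) hold, which is exactly the definition of a nonabelian semidirect system; this yields the asserted equivalence. There is no genuine obstacle here --- it is a routine specialization of Theorem~\ref{thm:datum and unified product} --- and the only point demanding a little care is the bookkeeping in the last step: one must recognize that (D12) with trivial cocycles is not an extra constraint but is precisely what forces $(V,\succ_{V},\prec_{V})$ to be a dendriform algebra, so that the clause ``$D$ and $V$ are two dendriform algebras'' in the definition of a nonabelian semidirect system is fully accounted for by the conditions of Theorem~\ref{thm:datum and unified product}.
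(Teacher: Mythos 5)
Your proposal is correct and follows exactly the paper's own route: the paper proves this proposition simply by specializing Theorem~\ref{thm:datum and unified product} to the case where $\rightharpoonup_{1},\rightharpoonup_{2},\leftharpoonup_{1},\leftharpoonup_{2},f_{1},f_{2}$ are trivial, which is precisely what you do (your version just writes out the bookkeeping — (D2)--(D5), (D7), (D9), (D11) vanishing, (D6), (D8), (D10) collapsing to (S1)--(S3), and (D12) giving the dendriform axioms on $V$ — in more detail than the paper does).
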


\begin{proof}
It's obtained directly by Theorem ~\ref{thm:datum and unified product} when the maps $\rightharpoonup_{1},\rightharpoonup_{2},\leftharpoonup_{1},\leftharpoonup_{2},f_{1}$ and $f_{2}$ are all trivial.
\end{proof}

\begin{defn}Let $\Omega(D,V)=(\triangleright_{1},\triangleright_{2},\triangleleft_{1},\triangleleft_{2},\succ_{V},\prec_{V})$ be a dendriform extending structure of $D$ through $V$. The associated unified product $\dnv$ is called the {\bf nonabelian semidirect product} associated to the nonabelian semidirect system $(D,V,\triangleright_{1},\triangleright_{2},\triangleleft_{1},\triangleleft_{2},\succ_{V},\prec_{V})$, denoted by $D\propto V$.
\end{defn}
\begin{rem}
By Eq.~\eqref{formulas:unified product}, the nonabelian semidirect product $D\propto V$ consists of the vector space $D\times V$, together with a dendriform algebra structure defined as follows:
\allowdisplaybreaks{
\begin{align*}
(a,x)\succeq(b,y)&:=(a\succ b,a\triangleright_{1} y+x\triangleleft_{1} b+x\succ_{V} y);\\
(a,x)\preceq(b,y)&:=(a\prec b,a\triangleright_{2} y+x\triangleleft_{2} b+x\prec_{V} y),\,\text{for}\, a,b\in D, x,y\in V.
\end{align*}
}
\end{rem}

%
%
%

Under the bijection map $\Upsilon_{1}$ (or map $\Upsilon_{2}$) in Proposition~\ref{prop:bijection of extension and unified product}, we have the following proposition.

\begin{prop}\label{prop:algebra morphism retraction}
Let $D\subset E $ be an extension of dendriform algebras with a retraction $\rho:E\rightarrow D$, and $V=\mathrm{ker}\,\rho$.
Then the following conditions are equivalent.
\begin{enumerate}
\item The map $\rho$ is a morphism of dendriform algebras.
\item The unified product corresponding to the dendriform extending structure $\Omega(D,V)=\Upsilon_{1}(E)$ is a nonabelian semidirect product $D\propto V$, the triple $(V,\succ_{V},\prec_{V})$ automatically becomes a subalgebra of $E$.
\end{enumerate}
In this case, the canonical projection $\pi:E\rightarrow V$ is a morphism of dendriform algebras, if and only if, the corresponding unified product is a direct sum $D\oplus V$.
\end{prop}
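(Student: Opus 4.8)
The plan is to convert each "morphism" statement into a triviality condition on the structure maps of the dendriform extending structure $\Omega(D,V)=\Upsilon_{1}(E)$, and then to invoke the characterizations of nonabelian semidirect products (Proposition~\ref{cor:system and semidirect product}) and of direct sums (Example~\ref{exam:direct sum and unified}) already established. First I would show that $\rho$ is a morphism of dendriform algebras if and only if all six maps $\leftharpoonup_{1},\leftharpoonup_{2},\rightharpoonup_{1},\rightharpoonup_{2},f_{1},f_{2}$ are trivial. Since $E=D+V$ and both $\succ_{E},\prec_{E}$ are bilinear, it suffices to test the identities $\rho(u\succ_{E}v)=\rho(u)\succ\rho(v)$ and $\rho(u\prec_{E}v)=\rho(u)\prec\rho(v)$ on generators $u,v\in D\cup V$. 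On a pair $(a,b)$ with $a,b\in D$ they hold automatically, because $\succ_{E}|_{D}=\succ$, $\prec_{E}|_{D}=\prec$ and $\rho\,i=\mathrm{Id}_{D}$. On the pairs $(a,x)$ and $(x,a)$ with $a\in D$, $x\in V$, the identities are precisely the assertions that $\rho$ is a left, resp. right, $D$ module morphism, so by Proposition~\ref{prop:left module morphism retraction} they are equivalent to the triviality of $\leftharpoonup_{1},\leftharpoonup_{2}$ and of $\rightharpoonup_{1},\rightharpoonup_{2}$. Finally, on a pair $(x,y)$ with $x,y\in V$, Eq.~\eqref{formulas:extension to unified} gives $\rho(x\succ_{E}y)=f_{1}(x,y)$ while $\rho(x)\succ\rho(y)=0$, so this case is equivalent to $f_{1}$ trivial, and the $\prec_{E}$ computation likewise gives $f_{2}$ trivial. (Equivalently, one may expand $(a+x)\succ_{E}(b+y)$ by Eq.~\eqref{formulas:unified to extension}, apply $\rho$, subtract $a\succ b$, and polarize in $a,b,x,y$.)

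Next I would deduce $(1)\Leftrightarrow(2)$. If $\rho$ is a morphism, the previous step forces $\leftharpoonup_{1},\leftharpoonup_{2},\rightharpoonup_{1},\rightharpoonup_{2},f_{1},f_{2}$ all to vanish, so $\Omega(D,V)=(\triangleright_{1},\triangleright_{2},\triangleleft_{1},\triangleleft_{2},\succ_{V},\prec_{V})$, which by Proposition~\ref{cor:system and semidirect product} is a nonabelian semidirect system, and the associated unified product is $D\propto V$. Moreover, plugging the vanishing maps into Eq.~\eqref{formulas:unified to extension} yields $x\succ_{E}y=x\succ_{V}y$ and $x\prec_{E}y=x\prec_{V}y$ for $x,y\in V$, so $(V,\succ_{V},\prec_{V})$ is a subalgebra of $E$. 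Conversely, if the unified product is a nonabelian semidirect product $D\propto V$, then by definition those six maps are trivial, so the equivalences of the first step run in reverse and $\rho$ is a morphism of dendriform algebras.

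For the last sentence I would argue under the standing hypotheses of $(1)$--$(2)$, so that $\succ_{E},\prec_{E}$ are given by the nonabelian semidirect product multiplication and $\pi:E\to V$ is $\pi(a+x)=x$. For $a,b\in D$ and $x,y\in V$ one has $\pi\big((a+x)\succ_{E}(b+y)\big)=a\triangleright_{1}y+x\triangleleft_{1}b+x\succ_{V}y$, while $\pi(a+x)\succ_{V}\pi(b+y)=x\succ_{V}y$; comparing and polarizing in $a,x,b,y$ shows that $\pi$ respects $\succ$ exactly when $\triangleright_{1}$ and $\triangleleft_{1}$ are trivial, and the analogous computation for $\prec_{E}$ gives $\triangleright_{2},\triangleleft_{2}$ trivial. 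When all eight actions vanish the extending datum is $(\succ_{V},\prec_{V})$, whose unified product is the direct sum $D\oplus V$ by Example~\ref{exam:direct sum and unified}; conversely, for a direct sum all actions are trivial and $\pi$ is visibly a morphism.

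The main point is simply careful bookkeeping: every identity is reduced via $E=D+V$ to a check on a generating set, the diagonal blocks ($\succ_{E}|_{D}=\succ$, together with $\rho$ landing in $D$ and $\pi$ landing in $V$) are handled directly, and the mixed blocks are exactly what Proposition~\ref{prop:left module morphism retraction}, Proposition~\ref{cor:system and semidirect product} and Example~\ref{exam:direct sum and unified} already control, so no new computation beyond Theorem~\ref{thm:datum and unified product} is needed. The only place one must be slightly attentive is the polarization step, checking that setting the ``extra'' arguments to $0$ isolates each structure map in turn.
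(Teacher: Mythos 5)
Your proof is correct and follows essentially the same route as the paper: reduce the condition that $\rho$ (resp.\ $\pi$) is a morphism, via Eq.~\eqref{formulas:extension to unified}/\eqref{formulas:unified to extension} on generators, to the triviality of $\rightharpoonup_{1},\rightharpoonup_{2},\leftharpoonup_{1},\leftharpoonup_{2},f_{1},f_{2}$ (resp.\ $\triangleright_{1},\triangleright_{2},\triangleleft_{1},\triangleleft_{2}$), and then invoke Proposition~\ref{cor:system and semidirect product} and the direct-sum case. The only cosmetic difference is that you check the subalgebra claim directly from Eq.~\eqref{formulas:unified to extension} and reuse Proposition~\ref{prop:left module morphism retraction} for the mixed pairs, where the paper cites Propositions~\ref{prop:matched pair unified product} and~\ref{prop:factor through matched pair} and leaves the computation implicit.
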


%

\begin{proof}
The linear retraction $\rho$ is a morphism of dendriform algebras if and only if the following Eq. ~(\ref{zformula(not):algebra morphism in prop}) holds for all $a,b\in D$ and $x,y\in V$,
\allowdisplaybreaks{
\begin{align}\label{zformula(not):algebra morphism in prop}
\begin{split}
    \rho\big((a+x)\succ_{E}(b+y)\big)&=\rho(a+x)\succ\,\rho(b+y);\\
    \rho\big((a+x)\prec_{E}(b+y)\big)&=\rho(a+x)\prec\,\rho(b+y).
\end{split}
\end{align}
}By direct computation, the linear retraction $\rho$ is a morphism of dendriform algebras if and only if the maps $\rightharpoonup_{1},\rightharpoonup_{2},\leftharpoonup_{1},\leftharpoonup_{2},f_{1}$ and $f_{2}$ are all trivial. According to Proposition ~\ref{cor:system and semidirect product}, the corresponding unified product $\dnv$ is a nonabelian semidirect product $D\propto V$, and the triple $(V,\succ_{V},\prec_{V})$ is a subalgebra of $E$ by Proposition ~\ref{prop:matched pair unified product} and Proposition ~\ref{prop:factor through matched pair}.

Similarly, we can prove that the projection $\pi$ is a morphism of dendriform algebras if and only if the maps $\triangleright_{1},\triangleright_{2},\triangleleft_{1}$ and $\triangleleft_{2}$ are all trivial, i.e., the corresponding unified product is a direct sum $D\oplus V$.
\end{proof}
\subsection{Splitting of short exact sequences}
In this subsection, we mainly consider the splitting of short exact sequences and always assume that the short exact sequence can be defined in the domain category.
\begin{defn}~\cite{R09} Let $\mathfrak{C}$ be a category. Suppose that
$$\xymatrix{
   0\ar[r] &A \ar@<1ex>[r]^{\iota} & B \ar@<1ex>[r]^{\pi}\ar@{-->}@<1ex>[l]^{\rho}& C \ar[r]\ar@{-->}@<1ex>[l]^{s} &0} $$
is a short exact sequence in the category $\mathfrak{C}$.
\begin{enumerate}
\item The short exact sequence is called {\bf (right) splitting} if there exists a morphism $s:C\rightarrow B$, such that $\pi\,s=\mathrm{Id}_{C}$.
\item The short exact sequence is called {\bf left splitting} if there exists a morphism $\rho:B\rightarrow A$, such that $\rho\,\iota=\mathrm{Id}_{A}$.
\end{enumerate}
\end{defn}
In fact, we have studied the left splitting of the short exact sequence in the category $_{D}\mathfrak{M}\,(\mathfrak{M}_{D})$ of all left (right) $D$ modules in Proposition ~\ref{prop:left module morphism retraction}. More specifically, the maps $i$ and $\pi$ are both $D$ bimodule morphisms in Diagram ~(\ref{diagram:equivalent cohomologous ee}), when the map $\rho$ is a left (right) $D$ module morphism, then the first row is left splitting in the category $_{D}\mathfrak{M}\,(\mathfrak{M}_{D})$. Moreover, we have studied the left splitting in the category $_{D}\mathfrak{M}_{D}$ of all $D$ bimodules in Corollary ~\ref{cor:bimodule retraction}.


Analogous to the category of all left $R$ (a ring) modules~\cite{R09}, we have the following proposition.

\begin{prop}\label{prop:module left right splitting}Let D be a dendriform algebra. Suppose that 
$$\xymatrix{
   0\ar[r] &(A,\triangleright^A_{1},\triangleright^A_{2}) \ar@<1ex>[r]^{\iota} & (B,\triangleright^B_{1},\triangleright^B_{2}) \ar@<1ex>[r]^{\pi}\ar@{-->}@<1ex>[l]^{\rho}& (C,\triangleright^C_{1},\triangleright^C_{2}) \ar[r]\ar@{-->}@<1ex>[l]^{s} &0} $$
is a short exact sequence in the category $_{D}\mathfrak{M}$. Then the left splitting of the sequence is equivalent to the right splitting.
\end{prop}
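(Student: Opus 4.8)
The plan is to adapt the classical splitting lemma for modules over a ring (see \cite{R09}), the only extra care being to check that all maps produced respect \emph{both} left actions $\triangleright_1$ and $\triangleright_2$, so that they live in $_{D}\mathfrak{M}$ rather than merely in $\mathrm{Vect}_{\bf k}$. Throughout, I use that the given sequence, being exact in $_{D}\mathfrak{M}$, is in particular exact in $\mathrm{Vect}_{\bf k}$: the map $\iota$ is injective, $\pi$ is surjective, and $\ker\pi=\mathrm{im}\,\iota$; moreover $\ker\pi$ and $\mathrm{im}\,\iota$ are sub-left-$D$-modules of $B$.

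First, assume the sequence is left splitting, with a left $D$-module morphism $\rho\colon B\rightarrow A$ satisfying $\rho\,\iota=\mathrm{Id}_{A}$. For $c\in C$ choose $b\in B$ with $\pi(b)=c$ and set $s(c):=b-\iota\big(\rho(b)\big)$. If $\pi(b)=\pi(b')=c$, then $b-b'=\iota(a)$ for a unique $a\in A$, and $\big(b-\iota\rho(b)\big)-\big(b'-\iota\rho(b')\big)=\iota(a)-\iota\rho\iota(a)=\iota(a)-\iota(a)=0$, so $s$ is well defined; it is clearly ${\bf k}$-linear, and $\pi\,s(c)=\pi(b)-\pi\iota\rho(b)=c$, hence $\pi\,s=\mathrm{Id}_{C}$. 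For the module structure, fix $d\in D$ and $c=\pi(b)$; since $\pi(d\triangleright_1 b)=d\triangleright_1 c$, the element $d\triangleright_1 b$ is an admissible representative for $d\triangleright_1 c$, so $s(d\triangleright_1 c)=d\triangleright_1 b-\iota\rho(d\triangleright_1 b)=d\triangleright_1 b-\iota\big(d\triangleright_1\rho(b)\big)=d\triangleright_1\big(b-\iota\rho(b)\big)=d\triangleright_1 s(c)$, using that $\rho$ and $\iota$ are left $D$-module morphisms and that $\triangleright_1$ is bilinear; the computation for $\triangleright_2$ is identical. Thus $s\colon C\rightarrow B$ is a morphism in $_{D}\mathfrak{M}$ with $\pi\,s=\mathrm{Id}_{C}$, i.e. the sequence is right splitting.

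Conversely, assume the sequence is right splitting, with a left $D$-module morphism $s\colon C\rightarrow B$ satisfying $\pi\,s=\mathrm{Id}_{C}$. For $b\in B$ one has $\pi\big(b-s\pi(b)\big)=\pi(b)-\pi s\pi(b)=0$, so $b-s\pi(b)\in\ker\pi=\mathrm{im}\,\iota$, and by injectivity of $\iota$ there is a unique $\rho(b)\in A$ with $\iota\big(\rho(b)\big)=b-s\pi(b)$. This defines a ${\bf k}$-linear map $\rho\colon B\rightarrow A$; taking $b=\iota(a)$ gives $\iota\rho\iota(a)=\iota(a)-s\pi\iota(a)=\iota(a)$, so $\rho\,\iota=\mathrm{Id}_{A}$. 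To see that $\rho$ is a left $D$-module morphism, apply $d\triangleright_1(-)$ to the defining identity $\iota\rho(b)=b-s\pi(b)$ and use that $\iota$, $s$ and $\pi$ commute with $\triangleright_1$: this yields $\iota\big(d\triangleright_1\rho(b)\big)=d\triangleright_1 b-s\pi(d\triangleright_1 b)=\iota\big(\rho(d\triangleright_1 b)\big)$, and cancelling the injective $\iota$ gives $\rho(d\triangleright_1 b)=d\triangleright_1\rho(b)$; the case of $\triangleright_2$ is the same. Hence $\rho$ is a morphism in $_{D}\mathfrak{M}$ with $\rho\,\iota=\mathrm{Id}_{A}$, i.e. the sequence is left splitting.

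All the verifications are routine; the only genuinely load-bearing point — and the step I would write out carefully — is the compatibility of the constructed maps $s$ and $\rho$ with the two actions $\triangleright_1,\triangleright_2$, which is exactly what places them in $_{D}\mathfrak{M}$. The right-module statement is proved verbatim with $\triangleleft_1,\triangleleft_2$ replacing $\triangleright_1,\triangleright_2$.
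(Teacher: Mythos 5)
Your proposal is correct and follows essentially the same route as the paper: in one direction you set $s=(\mathrm{id}_{B}-\iota\,\rho)\circ\pi^{-1}$ and in the other $\rho=\iota^{-1}\circ(\mathrm{id}_{B}-s\,\pi)$, checking well-definedness, the splitting identities, and compatibility with $\triangleright_{1},\triangleright_{2}$, exactly as in the paper's proof. The only difference is that you gloss the routine linearity verifications (which the paper writes out), and this is not a gap.
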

\begin{proof}\begin{enumerate}
\item \label{left right splitting1} We prove that the left splitting of the sequence implies the right splitting.
Since the sequence is left splitting, i.e., there exists a morphism of dendriform algebras $\rho:B\rightarrow A$ such that $\rho\,\iota=\mathrm{Id}_{A}$.
We establish a map:
\begin{align*}
s:C&\rightarrow B\\
x&\mapsto s(x):=(\mathrm{id}_{B}-\iota\,\rho)\big(\pi^{-1}(x)\big),\,x\in C,
\end{align*}
where the notation $\pi^{-1}(x)$ means to select a preimage of $x$, and the linear map $(\mathrm{id}_{B}-\iota\,\rho):B\rightarrow B$ is defined by $(\mathrm{id}_{B}-\iota\,\rho)(y):=\mathrm{id}_{B}(y)-\iota\big(\rho(y)\big)$ with $y\in B$. Then
\begin{align}
\pi\,(\mathrm{id}_{B}-\iota\,\rho)=&\pi-(\pi\,\iota)\,\rho=\pi;\label{formulas:pi and rho1}\\
\rho\,(\mathrm{id}_{B}-\iota\,\rho)=&\rho-(\rho\,\iota)\,\rho=\rho-\rho=0.\label{formulas:pi and rho2}
\end{align}
%

First, we prove that the map $s$ is well defined, i.e., it is independent of the choice of the preimages of $x$. Suppose that $y_{1}, y_{2}\in B$ are preimages of $x \in C$, then $\pi(y_{1})=\pi(y_{2})=x$. By Eq.~\eqref{formulas:pi and rho1}, we have
\begin{align*}
\pi\Big((\mathrm{id}_{B}-\iota\,\rho)(y_{1})-(\mathrm{id}_{B}-\iota\,\rho)(y_{2})\Big)
=&\pi\Big((\mathrm{id}_{B}-\iota\,\rho)(y_{1})\Big)-\pi\Big((\mathrm{id}_{B}-\iota\,\rho)(y_{2})\Big)
=\pi(y_{1})-\pi(y_{2})
=0.
\end{align*}
Hence $(\mathrm{id}_{B}-\iota\,\rho)(y_{1})-(\mathrm{id}_{B}-\iota\,\rho)(y_{2})\in \mathrm{ker}\, \pi=\mathrm{im}\,\iota$, then there exists $z\in A$, such that $(\mathrm{id}_{B}-\iota\,\rho)(y_{1})-(\mathrm{id}_{B}-\iota\,\rho)(y_{2})=\iota(z)$. By Eq.~\eqref{formulas:pi and rho2}, we have
\begin{align*}
z=\rho\big(\iota(z)\big)=\rho\Big((\mathrm{id}_{B}-\iota\,\rho)(y_{1})-(\mathrm{id}_{B}-\iota\,\rho)(y_{2})\Big)
=\rho\Big((\mathrm{id}_{B}-\iota\,\rho)(y_{1})\Big)-\rho\Big((\mathrm{id}_{B}-\iota\,\rho)(y_{2})\Big)
=0.
\end{align*}
Hence $(\mathrm{id}_{B}-\iota\,\rho)(y_{1})-(\mathrm{id}_{B}-\iota\,\rho)(y_{2})=\iota(z)=0$, so the map $s$ is well defined.

Second, we prove that $s$ is a linear map. Suppose that $y,y^{\prime}\in B$ are preimages of $x,x^{\prime}\in C$, respectively. Then $\pi(k_{1}y+k_{2}y^{\prime})=k_{1}\,\pi(y)+k_{2}\,\pi(y^{\prime})=k_{1}x+k_{2}x^{\prime}$ with $k_{1},k_{2} \in \bf k$, hence $k_{1}y+k_{2}y^{\prime}$ is a preimage of $k_{1}x+k_{1}x^{\prime}$, so
\begin{align*}
s(k_{1}x+k_{2}x^{\prime})=&(\mathrm{id}_{B}-\iota\,\rho)\big(\pi^{-1}(k_{1}x+k_{2}x^{\prime})\big)\\
=&(\mathrm{id}_{B}-\iota\,\rho)(k_{1}y+k_{2}y^{\prime})\\
=&k_{1}y+k_{2}y^{\prime}-\iota\big(\rho(k_{1}y+k_{2}y^{\prime})\big)\\
=&k_{1}y-k_{1}\,\iota\big(\rho(y)\big)+k_{2}y^{\prime}-k_{2}\,\iota\big(\rho(y^{\prime})\big)\\
=&k_{1}\,s(x)+k_{2}\,s(x^{\prime}).
\end{align*}

Third, we prove that $\pi\,s=\mathrm{Id}_{C}$ and $\rho\,s=0$. Let $y$ be a preimage of $x$ with $x\in C$, by Eqs.~\eqref{formulas:pi and rho1}-\eqref{formulas:pi and rho2}, we have
\begin{align*}
\pi(s(x))=&\pi\big((\mathrm{id}_{B}-\iota\,\rho)(y)\big)
=\pi(y)=x;\\
\rho(s(x))=&\rho\big((\mathrm{id}_{B}-\iota\,\rho)(y)\big)
=0.
\end{align*}

Finally, we prove that the map $s$ is a morphism of left $D$ modules. Suppose that $y$ is a preimage of $x$ with $x\in C$, since $\pi(a \triangleright_{1}^{B} y)=a \triangleright_{1}^{C} \pi(y)=a \triangleright_{1}^{C} x$, we know that $a \triangleright_{1}^{B} y$ is a preimage of $a \triangleright_{1}^{C} x$. Then we have
\allowdisplaybreaks{
\begin{align*}
s(a \triangleright_{1}^{C} x)=&(\mathrm{id}_{B}-\iota\,\rho)(a \triangleright_{1}^{B} y)\\
=&a \triangleright_{1}^{B} y-\iota\big(\rho(a \triangleright_{1}^{B} y)\big)\\
=&a \triangleright_{1}^{B} y-\iota\big(a \triangleright_{1}^{A} \rho(y)\big)\\
=&a \triangleright_{1}^{B} y-a \triangleright_{1}^{B}\iota\big(\rho(y)\big)\\
=&a \triangleright_{1}^{B} (\mathrm{id}_{B}-\iota\,\rho)(y)\\
=&a \triangleright_{1}^{B} s(x).
\end{align*}
Analogously we can prove that $s(a \triangleright_{2}^{C} x)=a \triangleright_{2}^{B} s(x)$. Hence the sequence is right splitting.

\item We prove that the right splitting of the sequence implies the left splitting. Since the sequence is right splitting, i.e., there exists a left $D$ module morphism $s:C\rightarrow B$ such that $\pi\,s=\mathrm{Id}_{C}$.
    We establish a map:
\begin{align*}
\rho:B&\rightarrow A\\
y&\mapsto \rho(y):=\iota^{-1}\Big(y-s\big(\pi(y)\big)\Big),\,y\in B.
\end{align*}
where the notation $\iota^{-1}(y)$ means the preimage of $y$ with $y\in B$ (the preimage of $y$ may not exist!).

First, we prove that the map $\rho$ is well defined, i.e., we need to prove that $y-s\big(\pi(y)\big)\in \mathrm{im}\,\iota$ with $y\in B$. Since $$\pi\Big(y-s\big(\pi(y)\big)\Big)=\pi(y)-\pi\Big(s\big(\pi(y)\big)\Big)=\pi(y)-(\pi\, s)\big(\pi(y)\big)=\pi(y)-\pi(y)=0,$$
we obtain that $y-s\big(\pi(y)\big)\in \mathrm{ker}\,\pi=\mathrm{im}\,\iota$.

Second, we prove that $\rho$ is a linear map and $\rho\,\iota=\mathrm{Id}_{A}$. Suppose $z=\rho(y)=\iota^{-1}\Big(y-s\big(\pi(y)\big)\Big)$ and $z^{'}=\rho(y^{'})=\iota^{-1}\Big(y^{'}-s\big(\pi(y^{'})\big)\Big)$ (with $y,y^{'}\in B$), then $\iota(z)=y-s\big(\pi(y)\big)$ and $\iota(z^{'})=y^{'}-s\big(\pi(y^{'})\big)$. For all $k_{1},k_{2} \in \bf k$, we have
\begin{align*}
\iota(k_{1}z+k_{2}z^{'})
=&k_{1}\,\iota(z)+k_{2}\,\iota(z^{'})\\
=&k_{1}\,\Big(y-s\big(\pi(y)\big)\Big)+k_{2}\,\Big(y^{'}-s\big(\pi(y^{'})\big)\Big)\\
=&(k_{1}y+k_{2}y^{'})-s\big(\pi(k_{1}y+k_{2} y^{'})\big).
\end{align*}
Hence \[k_{1}z+k_{2}z^{'}=\iota^{-1}\Big((k_{1}y+k_{2}y^{'})-s\big(\pi(k_{1}y+k_{2} y^{'})\big)\Big)=\rho(k_{1}y+k_{2}y^{'}).\]
Also we have
 \[\rho\big(\iota(z)\big)=\iota^{-1}\Big(\iota(z)-s\Big(\pi\big(\iota(z)\big)\Big)\Big)
=\iota^{-1}\Big(\iota(z)-s\big((\pi\,\iota)(z)\big)\Big)
=\iota^{-1}\big(\iota(z)\big)
=z, z\in A.\]

Finally, we prove that the map $\rho$ is a left module morphism. Suppose $z=\rho(y)=\iota^{-1}\Big(y-s\big(\pi(y)\big)\Big)$ with $y\in B$, then $\iota(z)=y-s\big(\pi(y)\big)$. Then we obtain
\begin{align*}
\iota\big(a\triangleright_{1}^{A} \rho(y)\big)
=&\iota(a\triangleright_{1}^{A} z)
=a\triangleright_{1}^{B} \iota(z)\\
=&a\triangleright_{1}^{B} \Big(y-s\big(\pi(y)\big)\Big)\\
=&a\triangleright_{1}^{B} y-a\triangleright_{1}^{B} s\big(\pi(y)\big)\\
=&a\triangleright_{1}^{B} y- s\big(a\triangleright_{1}^{C} \pi(y)\big)\\
=&a\triangleright_{1}^{B} y- s\big(\pi(a\triangleright_{1}^{B} y)\big).
\end{align*}
}
Hence $a\triangleright_{1}^{A} \rho(y)=\iota^{-1}\Big(a\triangleright_{1}^{B} y- s\big(\pi(a\triangleright_{1}^{B} y)\big)\Big)=\rho(a\triangleright_{1}^{B} y)$. Analogously we can prove that $a\triangleright_{2}^{A} \rho(y)=\rho(a\triangleright_{2}^{B} y)$.
\end{enumerate}
\end{proof}

In fact, we have studied the left splitting of the short exact sequence in the category $\mathbf{Dend}$ of all dendriform algebras in Proposition ~\ref{prop:algebra morphism retraction}. More specifically, the map $i$ is also a morphism of dendriform algebras in Diagram ~(\ref{diagram:equivalent cohomologous ee}), when the maps $\rho,\pi$ are both morphisms of dendriform algebras, then the first row is left splitting in the category $\mathbf{Dend}$. In contrast to Proposition ~\ref{prop:module left right splitting}, the right splitting short exact sequence in the category $\mathbf{Dend}$ may not be left splitting, refer to the following example.
\begin{exam}
To continue Example ~\ref{exam:ES problem}, selecting case 14 in Table ~\ref{table:classifying flag datum} and suppose that $\bar{q}_{1}=0,\bar{k}_{1}=1$, we obtain an extension $\cee$ of $D$ by Eq. ~\eqref{formulas:flag to extension}. In the case, the first row of Diagram ~(\ref{diagram:equivalent cohomologous ee}) is a short exact sequence in the category $\mathbf{Dend}$. In fact, the maps $\triangleright_{1},\triangleright_{2},\triangleleft_{1},\triangleleft_{2}$ are trivial by Eqs. ~(\ref{formulas:flag and extending structure}) and~(\ref{formulas:flag and D}), then $(V,\succ_{V},\prec_{V})$ is a dendriform algebra by Eq. (D12) of Theorem ~\ref{thm:datum and unified product}, and $\pi$ is a morphism of dendriform algebras by Proposition ~\ref{prop:algebra morphism retraction}.

To explain that the sequence is right splitting, we define a linear map $s:V\rightarrow E$ by $s(e_{2})=e_{2}$. By directly computing, the map $s$ is a morphism of dendriform algebras and $\pi\,s=\mathrm{Id}_{V}$.

Now we explain that the sequence can not be left splitting. By contradiction, assume that there exists a morphism $\rho:E\rightarrow D$ of dendriform algebras such that $\rho\, i=\mathrm{Id}_{D}$, then $\mathrm{ker}\,\rho$ is a space complement of $D$ in $E$ by Remark ~\ref{remark:retraction and V} ~\ref{remark:retraction and V1}, and $\mathrm{ker}\,\rho$ is an ideal of dendriform algebra $E$. Let $B$ be a space complement of $D$ in $E$, then the basis of $B$ is the form of $\lambda e_{1}+e_{2}(\lambda\in {\bf k})$. By directly computing, we find each space complement of $D$ in $E$ is not an ideal of dendriform algebra $E$, and this is a contradiction.
\end{exam}




\begin{prop}\label{prop:algebra left right splitting}
Suppose that 
$$\xymatrix{
   0\ar[r] &(A,\succ_{A},\prec_{A}) \ar@<1ex>[r]^{\iota} & (B,\succ_{B},\prec_{B}) \ar@<1ex>[r]^{\pi}\ar@{-->}@<1ex>[l]^{\rho}& (C,\succ_{C},\prec_{C}) \ar[r]\ar@{-->}@<1ex>[l]^{s} &0} $$
is a short exact sequence in the category $\mathbf{Dend}$. Then the left splitting of the sequence implies the right splitting.
\end{prop}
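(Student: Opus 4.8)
The plan is to adapt the argument of Proposition~\ref{prop:module left right splitting}, but to exploit the fact that the kernel of a morphism of dendriform algebras is automatically a subalgebra, which makes the construction more transparent than the explicit computation used in the module case. So suppose the sequence is left splitting: there is a morphism $\rho\colon B\to A$ of dendriform algebras with $\rho\,\iota=\mathrm{Id}_A$. I would set $V:=\ker\rho$; since $\rho$ is a morphism, $V$ is a subalgebra of $B$, and I write $\iota_V\colon V\hookrightarrow B$ for the inclusion, which is itself a morphism of dendriform algebras. I would then show that $\pi|_V\colon V\to C$ is an isomorphism of dendriform algebras and define the desired splitting by $s:=\iota_V\circ(\pi|_V)^{-1}$.

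The key steps are as follows. First, $\iota$ is injective and $\ker\pi=\mathrm{im}\,\iota$ by exactness, and $\pi\,\iota=0$. Second, $\pi|_V$ is injective: if $v\in V$ and $\pi(v)=0$, then $v=\iota(a)$ for some $a\in A$, whence $a=\rho\,\iota(a)=\rho(v)=0$ and $v=0$; thus $\ker\pi\cap V=\{0\}$. Third, $\pi|_V$ is surjective: given $x\in C$, pick $b\in B$ with $\pi(b)=x$; then $b-\iota\,\rho(b)\in V$ because $\rho\big(b-\iota\rho(b)\big)=\rho(b)-\rho\iota\rho(b)=0$, while $\pi\big(b-\iota\rho(b)\big)=\pi(b)-(\pi\iota)\rho(b)=\pi(b)=x$. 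Hence $\pi|_V$ is a bijective morphism of dendriform algebras, therefore an isomorphism, its inverse also being a morphism. Consequently $s:=\iota_V\circ(\pi|_V)^{-1}\colon C\to B$ is a composite of morphisms of dendriform algebras, hence a morphism, and $\pi\,s=\pi|_V\circ(\pi|_V)^{-1}=\mathrm{Id}_C$, so the sequence is right splitting. Unwinding the definitions, $s(x)=b-\iota\rho(b)$ for any preimage $b$ of $x$, matching the formula $s(x)=(\mathrm{id}_B-\iota\rho)\big(\pi^{-1}(x)\big)$ used in Proposition~\ref{prop:module left right splitting}; in particular the well-definedness of $s$ (independence of the preimage) is subsumed in the assertion that $\pi|_V$ is bijective.

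There is no serious obstacle here; the one point requiring a little care is the multiplicativity of $s$, and the clean way to dispose of it is the observation above that $V=\ker\rho$ is already a subalgebra, so no direct manipulation of $\succ_B,\prec_B$ is needed. Equivalently, one may note that $\mathrm{im}\,\iota=\ker\pi$ and $V=\ker\rho$ are both ideals of $B$ with $\mathrm{im}\,\iota\cap V=\{0\}$, so $B$ is the direct product of the dendriform algebras $\mathrm{im}\,\iota$ and $V$, under which $\pi$ becomes projection onto the factor $V\cong C$; this is the structural reason the splitting exists. If instead one prefers to imitate the explicit route of Proposition~\ref{prop:module left right splitting} and verify $s(x\succ_C x')=s(x)\succ_B s(x')$ by hand, the identity one needs is that the cross terms vanish, which follows from $\mathrm{im}\,\iota\succ_B V=V\succ_B\mathrm{im}\,\iota=\{0\}$ and the analogous statements for $\prec_B$ — consequences of both subspaces being ideals with trivial intersection. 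Finally, I would point out the contrast with Proposition~\ref{prop:module left right splitting}: the converse implication fails in $\mathbf{Dend}$, as the preceding example shows, so only the implication left-splitting $\Rightarrow$ right-splitting is asserted.
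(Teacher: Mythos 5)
Your proof is correct, and it reaches the goal by a somewhat different route than the paper. The paper keeps the explicit formula $s(x)=(\mathrm{id}_{B}-\iota\,\rho)\big(\pi^{-1}(x)\big)$ from Proposition~\ref{prop:module left right splitting}, reuses the well-definedness, linearity, $\pi\,s=\mathrm{Id}_{C}$ and $\rho\,s=0$ arguments, and then proves multiplicativity of $s$ by a computation: the defect $s(x\succ_{C}x')-s(x)\succ_{B}s(x')$ is shown to lie in $\mathrm{ker}\,\pi=\mathrm{im}\,\iota$ (using that $\pi$ is a dendriform morphism), and is then annihilated by applying the morphism $\rho$ together with $\rho\,s=0$. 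You instead observe that $\ker\rho$ is a subalgebra (indeed an ideal) of $B$, that $\pi|_{\ker\rho}$ is a bijective morphism of dendriform algebras whose inverse is again a morphism, and define $s$ as the composite $\iota_{V}\circ(\pi|_{\ker\rho})^{-1}$; this produces the same map $s$ but makes multiplicativity automatic, with no manipulation of $\succ_{B},\prec_{B}$, and as a bonus exhibits the structural statement $B\cong\mathrm{im}\,\iota\oplus\ker\rho$ as a direct sum of dendriform algebras (your verification that the cross terms vanish because both summands are ideals with trivial intersection is sound). What the paper's computation buys is uniformity with the module-case proof it cites; what yours buys is brevity and the conceptual explanation of why the splitting exists. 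Your closing remark is also consistent with the paper: only left splitting $\Rightarrow$ right splitting holds in $\mathbf{Dend}$, the converse failing by the example preceding the proposition.
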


\begin{proof}
\allowdisplaybreaks{
Similar to Item ~\ref{left right splitting1} in the proof of Proposition ~\ref{prop:module left right splitting}, we obtain $\pi\,s=\mathrm{Id}_{C},\,\rho\,s=0$. Here we only need to prove that the map $s$ is a morphism of dendriform algebras. For all $x,x^{\prime}\in C$, we have
\begin{align*}
\pi(s(x\succ_{C} x^{\prime})-s(x)\succ_{B} s(x^{\prime}))
=\pi(s(x\succ_{C} x^{\prime}))-\pi(s(x))\succ_{C} \pi(s(x^{\prime}))
=x\succ_{C} x^{\prime}-x\succ_{C} x^{\prime}
=0.
\end{align*}
Hence $s(x\succ_{C} x^{\prime})-s(x)\succ_{B} s(x^{\prime})\in \mathrm{ker}\, \pi=\mathrm{im}\,\iota$, then there exists $z\in D$, such that $s(x\succ_{C} x^{\prime})-s(x)\succ_{B} s(x^{\prime})=\iota(z)$. Then
\begin{align*}
z=\rho\big(\iota(z)\big)=\rho\big(s(x\succ_{C} x^{\prime})-s(x)\succ_{B} s(x^{\prime})\big)
=\rho\big(s(x\succ_{C} x^{\prime}))-\rho\big(s(x))\succ_{B} \rho\big(s(x^{\prime})\big)
=0.
\end{align*}
Hence $s(x\succ_{C} x^{\prime})-s(x)\succ_{B} s(x^{\prime})=\iota(z)=0$. Similarly, we have $s(x\prec_{C} x^{\prime})-s(x)\prec_{B} s(x^{\prime})=0$.
}
\delete{\wen{if delete the proof of Proposition ~\ref{prop:module left right splitting}. Since the sequence is left splitting, i.e., there exists a morphism of dendriform algebras $\rho:B\rightarrow D$ such that $\rho\,\iota=\mathrm{Id}_{D}$.
We establish a map:
\begin{align*}
s:C&\rightarrow B\\
x&\mapsto s(x):=(\mathrm{id}_{B}-\iota\,\rho)\big(\pi^{-1}(x)\big),\,x\in C.
\end{align*}
where the notation $\pi^{-1}(x)$ means to select a preimage of $x$, and the linear map $(\mathrm{id}_{B}-\iota\,\rho):B\rightarrow B$ is defined by $(\mathrm{id}_{B}-\iota\,\rho)(y):=\mathrm{id}_{B}(y)-\iota\big(\rho(y)\big)$ with $y\in B$. Then
\begin{align}
\pi\,(\mathrm{id}_{B}-\iota\,\rho)=&\pi-(\pi\,\iota)\,\rho=\pi;\label{formulas:pi and rho1}\\
\rho\,(\mathrm{id}_{B}-\iota\,\rho)=&\rho-(\rho\,\iota)\,\rho=\rho-\rho=0.\label{formulas:pi and rho2}
\end{align}
%
}
\wen{
First, we prove that the map $s$ is well defined, i.e., it is independent of the choice of the preimages of $x$. Suppose that $y_{1}, y_{2}$ are preimages of $x$ with $x \in C$, then $\pi(y_{1})=\pi(y_{2})=x$. By Eq.~\eqref{formulas:pi and rho1}, we have
\begin{align*}
\pi\Big((\mathrm{id}_{B}-\iota\,\rho)(y_{1})-(\mathrm{id}_{B}-\iota\,\rho)(y_{2})\Big)
=&\pi\Big((\mathrm{id}_{B}-\iota\,\rho)(y_{1})\Big)-\pi\Big((\mathrm{id}_{B}-\iota\,\rho)(y_{2})\Big)
=\pi(y_{1})-\pi(y_{2})
=0.
\end{align*}
Hence $(\mathrm{id}_{B}-\iota\,\rho)(y_{1})-(\mathrm{id}_{B}-\iota\,\rho)(y_{2})\in \mathrm{ker}\, \pi=\mathrm{im}\,\iota$, then there exists $z\in D$, such that $(\mathrm{id}_{B}-\iota\,\rho)(y_{1})-(\mathrm{id}_{B}-\iota\,\rho)(y_{2})=\iota(z)$. By Eq.~\eqref{formulas:pi and rho2}, we have
\begin{align*}
z=\rho\big(\iota(z)\big)=\rho\Big((\mathrm{id}_{B}-\iota\,\rho)(y_{1})-(\mathrm{id}_{B}-\iota\,\rho)(y_{2})\Big)
=\rho\Big((\mathrm{id}_{B}-\iota\,\rho)(y_{1})\Big)-\rho\Big((\mathrm{id}_{B}-\iota\,\rho)(y_{2})\Big)
=0.
\end{align*}
Hence $(\mathrm{id}_{B}-\iota\,\rho)(y_{1})-(\mathrm{id}_{B}-\iota\,\rho)(y_{2})=\iota(z)=0$, so the map $s$ is well defined.
}

\wen{
Second, we prove that $s$ is a linear map. Suppose $y$ and $y^{\prime}$ are the preimage of $x$ and $x^{\prime}$ (with $x,x^{\prime}\in C$), respectively. Then $\pi(k_{1}y+k_{2}y^{\prime})=k_{1}\,\pi(y)+k_{2}\,\pi(y^{\prime})=k_{1}x+k_{2}x^{\prime}$ with $k_{1},k_{2} \in \bf k$, hence $k_{1}y+k_{2}y^{\prime}$ is a preimage of $k_{1}x+k_{1}x^{\prime}$, so
\begin{align*}
s(k_{1}x+k_{2}x^{\prime})=&(\mathrm{id}_{B}-\iota\,\rho)\big(\pi^{-1}(k_{1}x+k_{2}x^{\prime})\big)\\
=&(\mathrm{id}_{B}-\iota\,\rho)(k_{1}y+k_{2}y^{\prime})\\
=&k_{1}y+k_{2}y^{\prime}-\iota\big(\rho(k_{1}y+k_{2}y^{\prime})\big)\\
=&k_{1}y-k_{1}\,\iota\big(\rho(y)\big)+k_{2}y^{\prime}-k_{2}\,\iota\big(\rho(y^{\prime})\big)\\
=&k_{1}\,s(x)+k_{2}\,s(x^{\prime}).
\end{align*}
}

\wen{
Third, we prove that $\pi\,s=\mathrm{Id}_{C}$ and $\rho\,s=0$. Let $y$ be a preimage of $x$ with $x\in C$, by Eqs.~\eqref{formulas:pi and rho1} and \eqref{formulas:pi and rho2}, we have
\begin{align}
\pi(s(x))=&\pi\big((\mathrm{id}_{B}-\iota\,\rho)(y)\big)
=\pi(y)=x;\label{formulas:pi and s1}\\
\rho(s(x))=&\rho\big((\mathrm{id}_{B}-\iota\,\rho)(y)\big)
=0.\label{formulas:pi and s2}
\end{align}
}

\wen{
Finally, we prove that the map $s$ is a morphism of dendriform algebras.
For all $x,x^{\prime}\in C$, by Eq.~\eqref{formulas:pi and s1}, we have
\begin{align*}
\pi(s(x\succ_{C} x^{\prime})-s(x)\succ_{B} s(x^{\prime}))
=\pi(s(x\succ_{C} x^{\prime}))-\pi(s(x))\succ_{C} \pi(s(x^{\prime}))
=x\succ_{C} x^{\prime}-x\succ_{C} x^{\prime}
=0.
\end{align*}
Hence $s(x\succ_{C} x^{\prime})-s(x)\succ_{B} s(x^{\prime})\in \mathrm{ker}\, \pi=\mathrm{im}\,\iota$, then there exists $z\in D$, such that $s(x\succ_{C} x^{\prime})-s(x)\succ_{B} s(x^{\prime})=\iota(z)$. By Eq.~\eqref{formulas:pi and s2}, we have
\begin{align*}
z=\rho\big(\iota(z)\big)=\rho\big(s(x\succ_{C} x^{\prime})-s(x)\succ_{B} s(x^{\prime})\big)
=\rho\big(s(x\succ_{C} x^{\prime}))-\rho\big(s(x))\succ_{B} \rho\big(s(x^{\prime})\big)
=0.
\end{align*}
Hence $s(x\succ_{C} x^{\prime})-s(x)\succ_{B} s(x^{\prime})=\iota(z)=0$. Similarly, we can prove that $s(x\prec_{C} x^{\prime})-s(x)\prec_{B} s(x^{\prime})=0$.
}}
\end{proof}

\section{Classifying complements for dendriform algebras}
 In this section, we define the deformation map and the deformation of dendriform algebras, and theoretically solve the classifying complements problem. The deformation map is often defined on a matched pair, while we define it on a dendriform extending structure (more general case), which is more practical in the classifying complements problem.
%

\begin{defn}
Let $\Omega(D,V)$ be a dendriform extending structure of $D$ through $V$. A linear map $d:V\rightarrow D$ is called a {\bf deformation map} of $\Omega(D,V)$, if the following conditions hold for all $x,y\in V$,
\allowdisplaybreaks{
\begin{align}
d(x)\succ\,d(y)-d(x\succ_{V}y)&=d\big(d(x)\triangleright_{1} y+x \triangleleft_{1} d(y)\big)-d(x)\leftharpoonup_{1} y-x\rightharpoonup_{1} d(y)-f_{1}(x,y);\label{formulas:deformation in defn1}\\
d(x)\prec\,d(y)-d(x\prec_{V}y)&=d\big(d(x)\triangleright_{2} y+x \triangleleft_{2} d(y)\big)-d(x)\leftharpoonup_{2} y-x\rightharpoonup_{2} d(y)-f_{2}(x,y).\label{formulas:deformation in defn2}
\end{align}
}
\end{defn}
We denote by $\mathcal{D}(D,V)$ the set of all deformation maps of $\Omega(D,V)$.

\begin{prop}\label{prop:demormation of X}
Let $\Omega(D,V)$ be a dendriform extending structure of $D$ through $V$, and  $d:V\rightarrow D$ a deformation map of $\Omega(D,V)$. Define two bilinear maps as follows:
\begin{align}
x\succ_{d}y:=&x\succ_{V}y+d(x)\triangleright_{1} y+x \triangleleft_{1} d(y);\label{formulas:deformation1}\\
x\prec_{d}y:=&x\prec_{V}y+d(x)\triangleright_{2} y+x \triangleleft_{2} d(y),\,x,y\in V.\label{formulas:deformation2}
\end{align}
Then $V_{d}:=(V,\succ_{d},\prec_{d})$ is a dendriform algebra, and we call $V_{d}$  the {\bf deformation} of $V$.
\end{prop}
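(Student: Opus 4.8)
The plan is to verify directly that the two operations $\succ_d$ and $\prec_d$ defined by Eqs.~\eqref{formulas:deformation1}--\eqref{formulas:deformation2} satisfy the three dendriform axioms \eqref{definition:dendriform:formula1}--\eqref{definition:dendriform:formula3}. The conceptual shortcut, however, is to avoid a brute-force expansion by exploiting the unified product $\dnv$, which by hypothesis is already a dendriform algebra (since $\Omega(D,V)$ is a dendriform extending structure). Concretely, I would consider the ``graph'' map $\gamma_d:V\to D\natural V$ given by $\gamma_d(x):=(d(x),x)$, and show that $\gamma_d(V)$ is a subalgebra of $\dnv$ isomorphic to $V_d$. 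This reduces the whole proposition to one computation: that $\gamma_d(x)\succeq\gamma_d(y)\in\gamma_d(V)$ and equals $\gamma_d(x\succ_d y)$, and similarly for $\preceq$.

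First I would compute, using Eq.~\eqref{formulas:unified product},
\begin{align*}
(d(x),x)\succeq(d(y),y)=\bigl(&d(x)\succ d(y)+d(x)\leftharpoonup_1 y+x\rightharpoonup_1 d(y)+f_1(x,y),\\
&d(x)\triangleright_1 y+x\triangleleft_1 d(y)+x\succ_V y\bigr).
\end{align*}
The second component is exactly $x\succ_d y$ by definition. The deformation-map condition \eqref{formulas:deformation in defn1} rearranges precisely to the statement that the first component equals $d\bigl(d(x)\triangleright_1 y+x\triangleleft_1 d(y)+x\succ_V y\bigr)=d(x\succ_d y)$; one just moves terms across. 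Hence $(d(x),x)\succeq(d(y),y)=(d(x\succ_d y),\,x\succ_d y)=\gamma_d(x\succ_d y)$. The analogous computation with $\preceq$ and Eq.~\eqref{formulas:deformation in defn2} gives $(d(x),x)\preceq(d(y),y)=\gamma_d(x\prec_d y)$. Therefore $\gamma_d:V_d\to\dnv$ is an injective linear map that intertwines $(\succ_d,\prec_d)$ with $(\succeq,\preceq)$, so its image is a dendriform subalgebra of $\dnv$, and $V_d$ inherits the dendriform axioms by transport along the bijection $\gamma_d$ onto its image.

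The main obstacle is bookkeeping rather than anything deep: one must be careful that the splitting of $\succeq$ into ``$\succ$-part'' and ``$\triangleright_1,\triangleleft_1,\succ_V$-part'' lines up correctly with the two halves of the deformation-map equations, and that $f=f_1+f_2$, $\leftharpoonup=\leftharpoonup_1+\leftharpoonup_2$, etc., are not accidentally conflated. If one instead wants a self-contained verification without invoking $\dnv$, the alternative is to substitute \eqref{formulas:deformation1}--\eqref{formulas:deformation2} into each dendriform axiom and expand; this is the same content but organized as three multi-term identities whose vanishing follows from the bimodule axioms \eqref{formulas:left module definition}--\eqref{formulas:bimodule definition} together with the deformation-map conditions. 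The graph-subalgebra argument is cleaner and I would present that, relegating the term-matching to a single displayed computation for each of $\succeq$ and $\preceq$.
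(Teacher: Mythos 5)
Your proposal is correct and takes essentially the same route as the paper: the paper also works inside the unified product $\dnv$, showing via the deformation-map equations that $(d(x),x)\succeq(d(y),y)$ has components $d(x\succ_{d}y)$ and $x\succ_{d}y$ (and likewise for $\preceq$), then transfers the dendriform axioms from $\dnv$ to $V_{d}$, only phrasing this with the projections $\pi_{D},\pi^{\prime}$ rather than your graph map $\gamma_{d}$. Your key rearrangement of Eq.~\eqref{formulas:deformation in defn1} into the statement that the $D$-component equals $d(x\succ_{d}y)$ is exactly the computation the paper performs.
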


\begin{proof}

By Theorem~\ref{thm:datum and unified product}, here we give the proof by the method of unified product $D \natural V$.  

First, we give some useful identities.
In Diagram ~(\ref{diagram:equivalent cohomologous ue}), define the canonical projection $\pi_{D}:\dnv\rightarrow D$ and the injection $i_{V}:V\rightarrow \dnv$ as follows:
$$\pi_{D}(a,x)=a,\quad i_{V}(x)=(0,x),\,\, a\in D,\,x\in V.$$
Then for all $x,y\in V$, we have
\begin{align}
x\succ_{d}y=&x\succ_{V}y+d(x)\triangleright_{1} y+x \triangleleft_{1} d(y)\nonumber\\
           =&\pi^{\prime}((d(x),x)\succeq (d(y),y));\label{zdeformation in proof1}\quad\text{(by Eq.~\eqref{formulas:unified product})}\\
d(x\succ_{d}y)=&d(x\succ_{V}y+d(x)\triangleright_{1} y+x \triangleleft_{1} d(y))\nonumber\\
              =&d(x)\succ d(y)+x\rightharpoonup_{1} d(y)+d(x)\leftharpoonup_{1} y+f_{1}(x,y)\quad\text{(by Eq.~\eqref{formulas:deformation in defn1})}\nonumber\\
              =&\pi_{D}((d(x),x)\succeq (d(y),y)).\label{zdeformation in proof2}\quad\text{(by Eq.~\eqref{formulas:unified product})}
\end{align}
Similarly, we have
\begin{align}
x\prec_{d}y=&\pi^{\prime}((d(x),x)\preceq (d(y),y));\label{zdeformation in proof3}\\
d(x\prec_{d}y)=&\pi_{D}((d(x),x)\preceq (d(y),y)).\label{zdeformation in proof4}
\end{align}

Second, we prove that $ V_{d}:=(V,\succ_{d},\prec_{d})$ is a dendriform algebra. Here we only prove Eq.~\eqref{definition:dendriform:formula1}, Eqs.~\eqref{definition:dendriform:formula2} -~\eqref{definition:dendriform:formula3} can be proved similarly. For all $x,y,z\in V$, we have
\allowdisplaybreaks{
\begin{align*}
(x\star_{d}y)\succ_{d}z
=&\pi^{\prime}\Big(\big(d(x\star_{d}y),x\star_{d}y\big)\succeq\big(d(z),z\big)\Big)\quad\text{(by Eqs.~\eqref{zdeformation in proof1}})\\
=&\pi^{\prime}\Big(\big(d(x\succ_{d}y+x\prec_{d}y),x\succ_{d}y+x\prec_{d}y\big)\succeq\big(d(z),z\big)\Big)\\
=&\pi^{\prime}\Big(\big(\pi_{D}((d(x),x)\succeq (d(y),y))+\pi_{D}((d(x),x)\preceq (d(y),y)),\\
&\pi^{\prime}((d(x),x)\succeq (d(y),y))+\pi^{\prime}((d(x),x)\preceq (d(y),y))\big)\succeq\big(d(z),z\big)\Big)\\
&\hspace{3cm}\text{(by Eqs.~\eqref{zdeformation in proof1}-\eqref{zdeformation in proof4})}\\
=&\pi^{\prime}\Big(\big(\pi_{D}((d(x),x)\underline{\star} (d(y),y)),\pi^{\prime}((d(x),x)\underline{\star} (d(y),y))\big)\succeq\big(d(z),z\big)\Big)\\
=&\pi^{\prime}\Big(((d(x),x)\underline{\star} (d(y),y)\big)\succeq\big(d(z),z\big)\Big)\\
=&\pi^{\prime}\Big((d(x),x)\succeq \big((d(y),y)\succeq(d(z),z)\big)\Big)\\
&\hspace{3cm}\text{(by $D \natural V$ being a dendriform algebra)}\\
=&\pi^{\prime}\Big((d(x),x)\succeq \big(\pi_{D}\big((d(y),y)\succeq(d(z),z)\big),\pi^{\prime}\big((d(y),y)\succeq(d(z),z)\big)\big)\Big)\\
=&\pi^{\prime}\Big((d(x),x)\succeq \big(d\big(y\succ_{d}z\big),y\succ_{d}z\big)\Big)\quad\text{(by Eqs.~\eqref{zdeformation in proof1} and ~\eqref{zdeformation in proof2})}\\
=&x\succ_{d}(y\succ_{d}z).\quad\text{(by Eq.~\eqref{zdeformation in proof1})}
\end{align*}}
\end{proof}

%

\begin{thm}\label{thm:complement to deformation}
Let $D \subset E$ be an extension of dendriform algebras with a retraction $\rho:E\rightarrow D$, $V:=\mathrm{ker}\,\rho$ and the dendriform extending structure $\Omega(D,V)=\Upsilon_{1}(E)$ by {\rm Remark~\ref{remark:retraction and V}~\ref{rem:eu map}}.  Suppose that $B$ is a $D$ dendriform complement of $E$, then there exists a deformation map $d:V\rightarrow D$ of $\Omega(D,V)$, such that $B\cong V_{d}$ as dendriform algebras.
\end{thm}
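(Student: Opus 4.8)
Given the $D$ dendriform complement $B$ of $E$, the idea is to read off the deformation map $d$ from the splitting $E = D \oplus B$ (as vector spaces) versus the reference splitting $E = D \oplus V$, where $V = \ker\rho$. Concretely, every $x \in V$ decomposes uniquely, using $E = D + B$ and $D \cap B = \{0\}$, as $x = -d(x) + \overline{x}$ with $d(x) \in D$ and $\overline{x} \in B$; this defines a linear map $d \colon V \to D$, and simultaneously a linear map $V \to B$, $x \mapsto \overline{x} = d(x) + x$. First I would check that $x \mapsto \overline{x}$ is a linear isomorphism $V \to B$: injectivity follows because $d(x) + x = 0$ forces $x \in D \cap V = \{0\}$, and surjectivity/dimension count follows from $B$ and $V$ both being space complements of $D$ in $E$ (equivalently, $\pi|_B \colon B \to V$ is a linear isomorphism and $x \mapsto \overline x$ is its inverse, since $\pi(d(x)+x) = x$).

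Next I would verify that $d$ is a deformation map of $\Omega(D,V)$, i.e. that Eqs.~\eqref{formulas:deformation in defn1}--\eqref{formulas:deformation in defn2} hold. The mechanism is that $B$ is a \emph{subalgebra} of $E$, so for $x, y \in V$ we have $\overline{x} \succ_E \overline{y} \in B$, hence $\rho(\overline{x}\succ_E\overline{y}) $ and the $V$-component of $\overline{x}\succ_E\overline{y}$ are forced to satisfy a compatibility. Expanding $(d(x)+x)\succ_E(d(y)+y)$ via Eq.~\eqref{formulas:unified to extension} (using $\Omega(D,V) = \Upsilon_1(E)$, so $\succ_E$ is exactly the unified-product multiplication transported to $E$) gives
\[
(d(x)+x)\succ_E(d(y)+y) = \Big(d(x)\succ d(y) + d(x)\leftharpoonup_1 y + x\rightharpoonup_1 d(y) + f_1(x,y)\Big) + \Big(d(x)\triangleright_1 y + x\triangleleft_1 d(y) + x\succ_V y\Big),
\]
with the first parenthesis in $D$ and the second in $V$. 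The condition that this element lies in $B$, i.e. equals $\overline{w}$ where $w \in V$ is its $V$-component, says precisely that its $D$-component equals $-d(w)$, i.e. $d\big(d(x)\triangleright_1 y + x\triangleleft_1 d(y) + x\succ_V y\big) = -\big(d(x)\succ d(y) + d(x)\leftharpoonup_1 y + x\rightharpoonup_1 d(y) + f_1(x,y)\big)$; rearranging and using linearity of $d$ yields Eq.~\eqref{formulas:deformation in defn1}, and the $\prec_E$ computation yields Eq.~\eqref{formulas:deformation in defn2} identically.

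Finally I would show $B \cong V_d$ as dendriform algebras, where $V_d = (V, \succ_d, \prec_d)$ is the deformation from Proposition~\ref{prop:demormation of X}. The candidate isomorphism is $\theta \colon V_d \to B$, $\theta(x) = d(x) + x = \overline x$, already known to be a linear isomorphism. To see it is multiplicative, note that $\theta(x) \succ_E \theta(y) = (d(x)+x)\succ_E(d(y)+y)$ lies in $B$ and its $V$-component (apply $\pi$, equivalently subtract its $D$-component $-d$ of the $V$-component) is exactly $d(x)\triangleright_1 y + x\triangleleft_1 d(y) + x\succ_V y = x\succ_d y$ by Eq.~\eqref{formulas:deformation1}; since an element of $B$ is determined by its $V$-component via $w \mapsto \overline w$, we get $\theta(x)\succ_E\theta(y) = \overline{x\succ_d y} = \theta(x\succ_d y)$, and similarly for $\prec$. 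The main obstacle is purely bookkeeping: carefully matching the $D$- and $V$-components in Eq.~\eqref{formulas:unified to extension} against the defining identities \eqref{formulas:deformation in defn1}--\eqref{formulas:deformation in defn2} and keeping the roles of $d$ (from $V$ to $D$) versus the embedding $x \mapsto d(x)+x$ (from $V$ to $B \subset E$) straight; there is no conceptual difficulty once the decomposition $x = -d(x) + \overline x$ is fixed.
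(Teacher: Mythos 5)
Your proposal is correct and is essentially the paper's own proof: the paper defines $d:=-\tilde{d}|_{V}$ with $\tilde{d}$ the retraction associated to $B$ (exactly your decomposition $x=-d(x)+\overline{x}$), verifies the deformation identities by applying $\tilde{d}$ to $\big(x+d(x)\big)\succ_{E}\big(y+d(y)\big)\in B$ expanded via Eq.~\eqref{formulas:unified to extension}, and uses the same isomorphism $x\mapsto x+d(x)$. Only fix a sign slip in your membership criterion: since $\overline{w}=d(w)+w$, an element of $B$ has $D$-component equal to $+d$ of its $V$-component (not $-d(w)$), and with that correction your displayed identity is precisely the rearranged form of Eqs.~\eqref{formulas:deformation in defn1}--\eqref{formulas:deformation in defn2}, as claimed.
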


\allowdisplaybreaks{
\begin{proof}
First, we construct a map $d:V\rightarrow D$ as follows:
let $\tilde{d}:E\rightarrow D$ be the linear retraction associated to $B$,
and $d:=-\tilde{d}|_{V}$. To prove that $d$ is a deformation map, we only prove Eq.~\eqref{formulas:deformation in defn1}, Eq.~\eqref{formulas:deformation in defn2} can be similarly proved.

For all $x\in V$, since $\tilde{d}(x-\tilde{d}(x))=\tilde{d}(x)-\tilde{d}(x)=0$, we have $x-\tilde{d}(x)=x+d(x)\in \mathrm{ker}\,\tilde{d}=B$ by Remark ~\ref{remark:retraction and V}~\ref{remark:retraction and V1}. Since $B$ is a $D$ dendriform complement of $E$, for all $x,y\in V$, we have \[\big(x+d(x)\big)\succ_{E}\big(y+d(y)\big)\in B=\mathrm{ker}\,\tilde{d}.\]
Also we have
\begin{align*}
0=&\tilde{d}\Big(\big(x+d(x)\big)\succ_{E}\big(y+d(y)\big)\Big)\\
=&\tilde{d}\big(d(x)\succ d(y)+x\rightharpoonup_{1} d(y)+d(x)\leftharpoonup_{1} y+f_{1}(x,y)+x\triangleleft_{1} d(y)+d(x)\triangleright_{1} y+x\succ_{V}y\big)\\
&\hspace{3cm}\text{(by Eq.~\eqref{formulas:unified to extension})}\\
=&d(x)\succ d(y)+x\rightharpoonup_{1} d(y)+d(x)\leftharpoonup_{1} y+f_{1}(x,y)-d(x\triangleleft_{1} d(y)+d(x)\triangleright_{1} y)-d(x\succ_{V}y).
\end{align*}

Second, we construct a linear map:
\begin{align*}
\phi:V_{d}&\rightarrow B\\
x&\mapsto x+d(x),\text{ $x\in V$,}
\end{align*}
the linear map $\phi$ is well defined as $x+d(x)\in B$ for all $x\in V$.

Third, we prove that the map $\phi$ is a linear isomorphism as follows: Suppose $\phi(x)=x+d(x)=0$, $x\in V$, then $x=-d(x)\in D$, we have $x\in D\cap V=\{0\}$ by Remark ~\ref{remark:retraction and V}~\ref{remark:retraction and V1}. Hence the map $\phi$ is injective.
By Remark ~\ref{remark:retraction and V}~\ref{remark:retraction and V1}, we have $\rho(x-\rho(x))=\rho(x)-\rho(x)=0$, hence $x-\rho(x)\in \mathrm{ker}\,\rho=V.$ For any $y \in B$, then we have
$$\phi\big(y-\rho(y)\big)
=y-\rho(y)+d\big(y-\rho(y)\big)
=y-\rho(y)-\tilde{d}\big(y-\rho(y)\big)
=y-\rho(y)+\rho(y)
=y,$$
i.e., the map $\phi$ is surjective.

Finally, we need to prove that the map $\phi$ is a morphism of dendriform algebras. For all $x,y\in V$, we have
\begin{align*}
\phi(x\succ_{d}y)=&x\succ_{d}y+d(x\succ_{d}y)\\
=&x\succ_{V}y+d(x)\triangleright_{1} y+x \triangleleft_{1} d(y)+d(x\succ_{V}y+d(x)\triangleright_{1} y+x \triangleleft_{1} d(y))\quad\text{(by Eq.~\eqref{formulas:deformation1})}\\
=&x\succ_{V}y+d(x)\triangleright_{1} y+x \triangleleft_{1} d(y)+d(x\succ_{V}y)+d(d(x)\triangleright_{1} y+x \triangleleft_{1} d(y))\\
=&x\succ_{V}y+d(x)\triangleright_{1} y+x \triangleleft_{1} d(y)+d(x)\succ d(y)+x\rightharpoonup_{1} d(y)+d(x)\leftharpoonup_{1} y+f_{1}(x,y)\\
&\hspace{3cm}\text{(by Eq.~\eqref{formulas:deformation in defn1})}\\
=&\big(x+d(x)\big)\succ_{E}\big(y+d(y)\big)\quad\text{(by Eq.~\eqref{formulas:unified to extension})}\\
=&\phi(x)\succ_{E}\phi(y).
\end{align*}
Similarly, we can prove $\phi(x\prec_{d}y)
=\phi(x)\prec_{E}\phi(y)$.
%
\end{proof}
}

\begin{rem}\label{rem:complement to deformation}
Under the condition of Theorem ~\ref{thm:complement to deformation}, we denote by $\mathcal{C}(D,E)$ the set of all $D$ dendriform complements of $E$, then we establishes a map:
\begin{align*}
\Delta_{1}:\mathcal{C}(D,E)&\rightarrow \mathcal{D}(D,V)\\
B&\mapsto d:=-\tilde{d}|_{V},
\end{align*}
where $\tilde{d}$ is the retraction associated to $B$.
\end{rem}

\begin{prop}\label{prop:bijection complement to deformation}
The map $\Delta_{1}:\mathcal{C}(D,E)\rightarrow \mathcal{D}(D,V)$ defined in {\rm Remark ~\ref{rem:complement to deformation}} is a bijection.
\end{prop}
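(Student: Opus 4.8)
The plan is to construct an explicit inverse map $\Delta_2 : \mathcal{D}(D,V) \to \mathcal{C}(D,E)$ and verify that $\Delta_2 \circ \Delta_1 = \mathrm{Id}$ and $\Delta_1 \circ \Delta_2 = \mathrm{Id}$. Given a deformation map $d : V \to D$, the natural candidate for the associated complement is the ``graph'' of $d$ inside $E$, namely $B_d := \{ x + d(x) \mid x \in V \}$, viewed as a subspace of $E$ via the identification $E = D \oplus V$ and the formula $(a+x)$ for elements of $E$. So $\Delta_2(d) := B_d$.

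First I would check that $B_d$ is indeed a $D$ dendriform complement of $E$, i.e. that $B_d$ is a subalgebra of $E$ with $E = D + B_d$ and $D \cap B_d = \{0\}$. The direct-sum and intersection conditions are immediate from $V$ being a space complement of $D$: any $u \in E$ writes as $u = \rho(u) + (u - \rho(u))$ with $u - \rho(u) \in V$, and $\rho(u) + (u-\rho(u)) = \big(\rho(u) - d(u-\rho(u))\big) + \big((u-\rho(u)) + d(u-\rho(u))\big)$ exhibits $u \in D + B_d$; and $a + x \in D$ with $x + d(x) = a + x$ forces $x = a - d(x) \in D \cap V = \{0\}$. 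That $B_d$ is closed under $\succ_E$ and $\prec_E$ is exactly where the deformation map conditions \eqref{formulas:deformation in defn1}--\eqref{formulas:deformation in defn2} are used: computing $(x + d(x)) \succ_E (y + d(y))$ via Eq.~\eqref{formulas:unified to extension} (which expresses $\succ_E$ in terms of $\Omega(D,V) = \Upsilon_1(E)$) and regrouping the $D$-component and $V$-component, one sees that $(x+d(x))\succ_E(y+d(y)) = (x \succ_d y) + d(x \succ_d y)$ precisely when \eqref{formulas:deformation in defn1} holds — this computation is essentially already carried out in the proof of Theorem~\ref{thm:complement to deformation}, where $\phi(x \succ_d y) = \phi(x) \succ_E \phi(y)$ is verified. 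So $B_d$ is a subalgebra, hence $\Delta_2$ is well defined; moreover the same computation shows $B_d \cong V_d$ as dendriform algebras via $x \mapsto x + d(x)$.

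Next I would verify the two compositions are identities. For $\Delta_1 \circ \Delta_2 = \mathrm{Id}_{\mathcal{D}(D,V)}$: starting from $d$, the complement is $B_d = \{x + d(x) \mid x \in V\}$, whose associated retraction $\tilde d : E \to D$ kills $B_d$ and restricts to $\mathrm{Id}_D$ on $D$; for $x \in V$ we have $x + d(x) \in B_d = \ker \tilde d$, so $\tilde d(x) = \tilde d\big((x + d(x)) - d(x)\big) = -\tilde d(d(x)) = -d(x)$, whence $-\tilde d|_V = d$, as desired. For $\Delta_2 \circ \Delta_1 = \mathrm{Id}_{\mathcal{C}(D,E)}$: starting from a complement $B$ with associated retraction $\tilde d$ and $d = -\tilde d|_V$, I must show $B_d = B$. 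If $b \in B$ then $b = \rho(b) + (b - \rho(b))$ with $b - \rho(b) \in V$, and $\tilde d(b) = 0$ gives $\rho(b) = -\tilde d\big(b - \rho(b)\big) + \tilde d(\rho(b))$... more cleanly: write $x := b - \rho(b) \in V$; then $b = \rho(b) + x$ and applying $\tilde d$ gives $0 = \tilde d(\rho(b)) + \tilde d(x) = \rho(b) + \tilde d(x)$, so $\rho(b) = -\tilde d(x) = d(x)$, hence $b = x + d(x) \in B_d$. Conversely $x + d(x) = x - \tilde d(x)$ lies in $\ker \tilde d = B$. Thus $B_d = B$. Finally I would note that $d \in \mathcal{D}(D,V)$ by Theorem~\ref{thm:complement to deformation} applied to $B_d$ (or directly, since the subalgebra computation above is reversible), so both maps land in the correct sets.

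The main obstacle is bookkeeping rather than conceptual: one must be careful that the dendriform extending structure $\Omega(D,V) = \Upsilon_1(E)$ appearing in the deformation map equations is literally the one attached to the chosen retraction $\rho$ and $V = \ker\rho$, and that Eq.~\eqref{formulas:unified to extension} — which is the inverse of \eqref{formulas:extension to unified} by Proposition~\ref{prop:bijection of extension and unified product} — is used consistently when re-expanding $\succ_E, \prec_E$ on elements of the form $x + d(x)$. Since the forward direction (deformation map $\Rightarrow$ complement) is the reverse of the computation already done in Theorem~\ref{thm:complement to deformation}, and the inverse-map bookkeeping only uses the elementary properties $\tilde d \iota = \mathrm{Id}_D$, $\ker \tilde d = B$, $\rho \iota = \mathrm{Id}_D$, $\ker\rho = V$ from Remark~\ref{remark:retraction and V}~\ref{remark:retraction and V1}, no genuinely hard step arises; the write-up is a routine bijection verification.
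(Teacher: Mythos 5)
Your proposal is correct and follows essentially the same route as the paper: the paper's inverse $\Delta_{2}$ sends $d$ to $\mathrm{ker}\,\xi$ with $\xi(a+x)=a-d(x)$, which it immediately identifies with your graph $\{x+d(x)\mid x\in V\}$, verifies closure under $\succ_{E},\prec_{E}$ by the same computation with Eqs.~\eqref{formulas:unified to extension} and \eqref{formulas:deformation in defn1}--\eqref{formulas:deformation in defn2}, and checks the two compositions exactly as you do. Your observation that the closure computation is the one already performed in Theorem~\ref{thm:complement to deformation} (and uses only the deformation-map identities, not the existence of a complement) is accurate, so no gap remains.
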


\begin{proof}
First, we establish a map:
\begin{align*}
\Delta_{2}:\mathcal{D}(D,V)&\rightarrow \mathcal{C}(D,E)\\
d&\mapsto B:=\mathrm{ker}\, \xi,
\end{align*}
where the linear map $\xi:E\rightarrow D$ is defined by $\xi(a+x)=a-d(x)$,\,$a\in D$, $x\in V$.

Second, we need to prove that the map $\Delta_{2}$ is well defined, i.e., $B$ is a $D$ dendriform complement of $E$. Suppose that the inclusion map $i:D\rightarrow E$ is defined by $i(a)=a,\,a\in D$, then $\xi(i(a))=\xi(a)=a$, i.e., the map $\xi$ is a linear retraction. By Remark ~\ref{remark:retraction and V}~\ref{remark:retraction and V1}, $B$ is a space complement of $D$ in $E$. We only need to prove that the operations $\succ_{E}$ and $\prec_{E}$ is closed on $B$. In fact,
\begin{align*}
B=&\mathrm{ker}\, \xi\\
 =&\{a+x|\xi(a+x)=a-d(x)=0,\,a\in D, x\in V\}\\
  =&\{a+x|a=d(x),\,a\in D, x\in V\}\\
=&\{d(x)+x|x\in V\}.
\end{align*}
Also we have
\begin{align*}
&\big(x+d(x)\big)\succ_{E}\big(y+d(y)\big)\\
=&d(x)\succ d(y)+x\rightharpoonup_{1} d(y)+d(x)\leftharpoonup_{1} y+f_{1}(x,y)+x\succ_{V}y+d(x)\triangleright_{1} y+x \triangleleft_{1} d(y)\\
&\hspace{3cm}\text{(by Eq.~\eqref{formulas:unified to extension})}\\
=&d(d(x)\triangleright_{1} y+x \triangleleft_{1} d(y))+x\succ_{V}y+d(x)\triangleright_{1} y+x \triangleleft_{1} d(y)+d(x\succ_{V}y)\\
&\hspace{3cm}\text{(by Eq.~\eqref{formulas:deformation in defn1})}\\
=&d(x\succ_{V}y+d(x)\triangleright_{1} y+x \triangleleft_{1} d(y))+x\succ_{V}y+d(x)\triangleright_{1} y+x \triangleleft_{1} d(y)\in B.
\end{align*}

Similarly, we can prove that the operations $\prec_{E}$ is closed on $B$.

Third, we consider the composition map $\Delta_{1}\circ\Delta_{2}$
\begin{align*}
\Delta_{1}\circ\Delta_{2}:\mathcal{D}(D,V)&\rightarrow\,\,\,\mathcal{C}(D,E)\,\,\,\rightarrow \,\,\,\mathcal{D}(D,V)\\
d\,\,\,\,\,\,&\mapsto B:=\mathrm{ker}\, \xi\mapsto d^{\prime}:=-\tilde{d}|_{V}.
\end{align*}
For all $x\in V$, $\xi(x+d(x))=-d(x)+d(x)=0$, then we have $x+d(x)\in \mathrm{ker}\, \xi=B$. By Remark ~\ref{remark:retraction and V}~\ref{remark:retraction and V1}, we have $\tilde{d}(x+d(x))=0$, then we have $$d^{\prime}(x)=-\tilde{d}(x)=-\tilde{d}(-d(x)+(x+d(x)))=-(-d(x))=d(x),$$
i.e., $\Delta_{1}\circ\Delta_{2}=\mathrm{id}_{\mathcal{D}(D,V)}$.

Finally, we consider the composition map $\Delta_{2}\circ\Delta_{1}$
\begin{align*}
\Delta_{2}\circ\Delta_{1}:\mathcal{C}(D,E)&\rightarrow\,\,\mathcal{D}(D,V)\,\rightarrow\,\,\,\mathcal{C}(D,E)\\
B\,\,\,\,\,\,&\mapsto d:=-\tilde{d}|_{V}\mapsto B^{\prime}:=\mathrm{ker}\, \xi.
\end{align*}
For all $a\in D,x\in V$, we have \[\xi(a+x)=a-d(x)=a-(-\tilde{d}(x))=\tilde{d}(a)+\tilde{d}(x)=\tilde{d}(a+x).\]
Then $B^{\prime}=\mathrm{ker}\, \xi=\mathrm{ker}\, \tilde{d}=B$, i.e., $\Delta_{2}\circ\Delta_{1}=\mathrm{id}_{\mathcal{C}(D,E)}$.
\end{proof}
We can convert the classifying problem on the set $\mathcal{C}(D,E)$ to the one on the set $\mathcal{D}(D,V)$.



\allowdisplaybreaks{
\begin{defn}\label{definition:equivalent deformation}
Let $\Omega(D,V)$ be a dendriform extending structure of $D$ through $V$. Two deformation maps $d,d^{\prime}:V\rightarrow D$ of $\Omega(D,V)$ are called {\bf equivalent} (denote by $d\sim d^{\prime}$), if there exists a linear automorphism $\delta:V\rightarrow V$ satisfying the following conditions with $x,y\in V$,
\begin{align}
\delta(x)\succ_{V}\delta(y)-\delta(x\succ_{V}y)&=\delta\big(d(x)\triangleright_{1} y\big)+\delta\big(x \triangleleft_{1} d(y)\big)-d^{\prime}\big(\delta(x)\big)\triangleright_{1} \delta(y)-\delta(x)\triangleleft_{1} d^{\prime}\big(\delta(y)\big);\label{formulas:equivalent deformation1}\\
\delta(x)\prec_{V}\delta(y)-\delta(x\prec_{V}y)&=\delta\big(d(x)\triangleright_{2} y\big)+\delta\big(x \triangleleft_{2} d(y)\big)-d^{\prime}\big(\delta(x)\big)\triangleright_{2} \delta(y)-\delta(x)\triangleleft_{2} d^{\prime}\big(\delta(y)\big).\label{formulas:equivalent deformation2}
\end{align}
\end{defn}
}

\begin{prop}\label{proposition:equivalent deformation}
Let $\Omega(D,V)$ be a dendriform extending structure of $D$ through $V$. Suppose that $d,d^{\prime}:V\rightarrow D$ are deformation maps of $\Omega(D,V)$, then $V_{d}\cong V_{d^{\prime}}$ as dendriform algebras if and only if $d\sim d^{\prime}$.
\end{prop}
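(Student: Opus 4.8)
The plan is to prove both directions by relating isomorphisms $V_d \cong V_{d'}$ to automorphisms $\delta$ of $V$ satisfying the compatibility conditions \eqref{formulas:equivalent deformation1}--\eqref{formulas:equivalent deformation2}. The key observation is that $\delta$ being an isomorphism $V_d \to V_{d'}$ means $\delta(x\succ_d y) = \delta(x)\succ_{d'}\delta(y)$ and $\delta(x\prec_d y) = \delta(x)\prec_{d'}\delta(y)$ for all $x,y\in V$. Using the formulas \eqref{formulas:deformation1} and \eqref{formulas:deformation2} for $\succ_d$ and $\succ_{d'}$, namely $x\succ_d y = x\succ_V y + d(x)\triangleright_1 y + x\triangleleft_1 d(y)$ and $x\succ_{d'}y = x\succ_V y + d'(x)\triangleright_1 y + x\triangleleft_1 d'(y)$, the isomorphism condition for $\succ$ becomes, after expanding and using linearity of $\delta$,
\[
\delta(x\succ_V y) + \delta\big(d(x)\triangleright_1 y\big) + \delta\big(x\triangleleft_1 d(y)\big) = \delta(x)\succ_V \delta(y) + d'(\delta(x))\triangleright_1 \delta(y) + \delta(x)\triangleleft_1 d'(\delta(y)),
\]
which is exactly \eqref{formulas:equivalent deformation1} after rearranging terms; similarly the $\prec$ condition is equivalent to \eqref{formulas:equivalent deformation2}. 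So the proof is essentially a bookkeeping exercise: the compatibility conditions in Definition~\ref{definition:equivalent deformation} were designed precisely so that $\delta$ satisfies them if and only if $\delta$ is a morphism of dendriform algebras from $V_d$ to $V_{d'}$.

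Concretely, for the ``$\Leftarrow$'' direction I would start from $d\sim d'$, so there is a linear automorphism $\delta:V\to V$ satisfying \eqref{formulas:equivalent deformation1}--\eqref{formulas:equivalent deformation2}. I would then verify directly that $\delta:V_d\to V_{d'}$ preserves $\succ_d,\prec_d$: substitute the definitions \eqref{formulas:deformation1}--\eqref{formulas:deformation2} into $\delta(x)\succ_{d'}\delta(y)$, collect the terms, and use \eqref{formulas:equivalent deformation1} to rewrite the combination $d'(\delta(x))\triangleright_1\delta(y) + \delta(x)\triangleleft_1 d'(\delta(y))$ in terms of $\delta$ applied to the corresponding $d$-terms; the result matches $\delta(x\succ_d y)$. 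Since $\delta$ is bijective and linear (and both $V_d$, $V_{d'}$ are dendriform algebras by Proposition~\ref{prop:demormation of X}), this makes $\delta$ an isomorphism of dendriform algebras, giving $V_d\cong V_{d'}$.

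For the ``$\Rightarrow$'' direction, suppose $\varphi:V_d\to V_{d'}$ is an isomorphism of dendriform algebras. Since $\varphi$ is in particular a linear bijection $V\to V$, set $\delta:=\varphi$, a linear automorphism of $V$. Then reverse the computation above: the identities $\varphi(x\succ_d y)=\varphi(x)\succ_{d'}\varphi(y)$ and $\varphi(x\prec_d y)=\varphi(x)\prec_{d'}\varphi(y)$, together with the explicit formulas \eqref{formulas:deformation1}--\eqref{formulas:deformation2}, force \eqref{formulas:equivalent deformation1}--\eqref{formulas:equivalent deformation2}, so $d\sim d'$. I do not expect a genuine obstacle here — the conditions in Definition~\ref{definition:equivalent deformation} are tailored to this statement — but the one point requiring care is the clean separation of the $\succ$- and $\prec$-contributions when expanding $x\succ_{d'}y$ and $x\prec_{d'}y$: one must make sure that $\succ_{d'}$ uses $\triangleright_1,\triangleleft_1$ while $\prec_{d'}$ uses $\triangleright_2,\triangleleft_2$, matching the two conditions respectively, and that no cross-terms between $\triangleright_1$ and $\triangleright_2$ appear. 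Once that is organized, both implications follow by direct substitution.
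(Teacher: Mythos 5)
Your proposal is correct and follows essentially the same route as the paper: in both cases one identifies an isomorphism $V_{d}\cong V_{d^{\prime}}$ with a linear automorphism $\delta$ of $V$ satisfying $\delta(x\succ_{d}y)=\delta(x)\succ_{d^{\prime}}\delta(y)$ and $\delta(x\prec_{d}y)=\delta(x)\prec_{d^{\prime}}\delta(y)$, and then expands these via Eqs.~\eqref{formulas:deformation1}--\eqref{formulas:deformation2} to see they are exactly the conditions \eqref{formulas:equivalent deformation1}--\eqref{formulas:equivalent deformation2} of Definition~\ref{definition:equivalent deformation}. Your attention to keeping the $\triangleright_{1},\triangleleft_{1}$ terms with $\succ$ and the $\triangleright_{2},\triangleleft_{2}$ terms with $\prec$ is precisely the bookkeeping the paper's computation carries out.
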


\allowdisplaybreaks{
\begin{proof}We know that $V_{d}\cong V_{d^{\prime}}$ if and only if there exists an isomorphism of dendriform algebras $\delta:V_{d}\rightarrow V_{d^{\prime}}$, i.e., $\delta:V\rightarrow V$ is a linear isomorphism satisfying the following conditions
    \begin{align}
    \delta(x\succ_{d} y)&=\delta(x)\succ_{d^{\prime}} \delta(y);\label{formulas:morphism in proof1}\\
    \delta(x\prec_{d} y)&=\delta(x)\prec_{d^{\prime}} \delta(y),\,x,y\in V.\label{formulas:morphism in proof2}
    \end{align}
We need to prove that Eqs. ~\eqref{formulas:equivalent deformation1}-\eqref{formulas:equivalent deformation2} hold if and only if Eqs. ~\eqref{formulas:morphism in proof1}-\eqref{formulas:morphism in proof2} hold.
In fact,
    \begin{align*}
    &\delta(x\succ_{d} y)-\delta(x)\succ_{d^{\prime}}\delta(y)\\
    =&\delta\big(x\succ_{V}y+d(x)\triangleright_{1} y+x \triangleleft_{1} d(y)\big)-\big(\delta(x)\succ_{V}\delta(y)+d^{\prime}\big(\delta(x)\big)\triangleright_{1} \delta(y)+\delta(x)\triangleleft_{1} d^{\prime}\big(\delta(y)\big)\big)\\
    &\hspace{3cm}\text{(by Eq.~\eqref{formulas:deformation in defn1} and Eq.~\eqref{formulas:deformation1})}\\
    =&\delta\big(x\succ_{V}y\big)+\delta\big(d(x)\triangleright_{1} y\big)+\delta\big(x \triangleleft_{1} d(y)\big)-\delta(x)\succ_{V}\delta(y)-d^{\prime}\big(\delta(x)\big)\triangleright_{1} \delta(y)-\delta(x)\triangleleft_{1} d^{\prime}\big(\delta(y)\big)\\
=&\delta(x)\succ_{V}\delta(y)-\delta(x\succ_{V}y)-\big(\delta\big(d(x)\triangleright_{1} y\big)+\delta\big(x \triangleleft_{1} d(y)\big)-d^{\prime}\big(\delta(x)\big)\triangleright_{1} \delta(y)-\delta(x)\triangleleft_{1} d^{\prime}\big(\delta(y)\big)\big).
    \end{align*}
Hence Eq. ~\eqref{formulas:equivalent deformation1} holds if and only if Eq. ~\eqref{formulas:morphism in proof1} holds. Similarly, we can prove that Eq. ~\eqref{formulas:equivalent deformation2} holds if and only if Eq.~\eqref{formulas:morphism in proof2} holds.
\end{proof}
}

Now we arrive at our main result in this section.
\begin{thm}
Let $D \subset E$ be an extension of dendriform algebras with a retraction $\rho:E\rightarrow D$, $V:=\mathrm{ker}\,\rho$ and the dendriform extending structure $\Omega(D,V)=\Upsilon_{1}(E)$ by {\rm Remark~\ref{remark:retraction and V}~\ref{rem:eu map}}.  Then
\begin{enumerate}
\item the relation $\sim$ is an equivalence relation on the set $\mathcal{D}(D,V)$.

\item there exists a bijection between the sets $CH^{2}(D,E):=\mathcal{C}(D,E)/\cong$ and $H^{2}(D,V):=\mathcal{D}(D,V)/\sim$. Define
\begin{align*}
\Delta:CH^{2}(D,E)&\rightarrow H^{2}(D,V)\\
     \overline{B}\,\,\,\,\,\,\,\,\,\,\,\,&\mapsto \,\,\,\,\,\,\,\,\,\,\,\,\bar{d}
\end{align*}
 where the map $d=\Delta_{1}(B)$ by {\rm Remark ~\ref{rem:complement to deformation}}, $\overline{B}$ and $\bar{d}$ denote the equivalence class of $B$ and $d$ via $\cong$ and $\sim$, respectively. In particular, the index of $D$ in $E$ is computed by the formula $[E:D]=|H^{2}(D,V)|$.
 \end{enumerate}
\end{thm}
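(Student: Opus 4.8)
~The plan is to deduce the theorem directly from the machinery already assembled in Section~5, rather than to rerun any calculations. The key prior facts are: Proposition~\ref{prop:bijection complement to deformation}, which gives a bijection $\Delta_1\colon\mathcal{C}(D,E)\to\mathcal{D}(D,V)$ (with inverse $\Delta_2$); Theorem~\ref{thm:complement to deformation}, which says $B\cong V_{\Delta_1(B)}$ as dendriform algebras; and Proposition~\ref{proposition:equivalent deformation}, which says $V_d\cong V_{d'}$ if and only if $d\sim d'$. The strategy is: first check $\sim$ is an equivalence relation by pushing it through the bijection $\Delta_1$ and Proposition~\ref{proposition:equivalent deformation}; then show $\Delta_1$ descends to a well-defined bijection on quotients; then read off the cardinality statement.

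For part~(1), I would argue that $d\sim d'$ if and only if $V_d\cong V_{d'}$ (Proposition~\ref{proposition:equivalent deformation}), and since isomorphism of dendriform algebras is manifestly an equivalence relation (reflexive via $\mathrm{Id}$, symmetric via inverse isomorphism, transitive via composition), the relation $\sim$ inherits reflexivity, symmetry and transitivity. Concretely: $d\sim d$ because $V_d\cong V_d$; if $d\sim d'$ then $V_d\cong V_{d'}$, hence $V_{d'}\cong V_d$, hence $d'\sim d$; and if $d\sim d'$, $d'\sim d''$ then $V_d\cong V_{d'}\cong V_{d''}$, so $d\sim d''$. This is the cleanest route and avoids manipulating Eqs.~\eqref{formulas:equivalent deformation1}--\eqref{formulas:equivalent deformation2} by hand.

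For part~(2), I would first observe that the relation $\cong$ on $\mathcal{C}(D,E)$ is also an equivalence relation, so both quotient sets are well-defined. Then I claim that under the bijection $\Delta_1\colon\mathcal{C}(D,E)\to\mathcal{D}(D,V)$ we have $B\cong B'$ if and only if $\Delta_1(B)\sim\Delta_1(B')$: indeed, by Theorem~\ref{thm:complement to deformation}, $B\cong V_{\Delta_1(B)}$ and $B'\cong V_{\Delta_1(B')}$, so $B\cong B'$ iff $V_{\Delta_1(B)}\cong V_{\Delta_1(B')}$, which by Proposition~\ref{proposition:equivalent deformation} holds iff $\Delta_1(B)\sim\Delta_1(B')$. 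A bijection between two sets that matches up two equivalence relations in this way induces a bijection on the quotients; this gives the map $\Delta\colon CH^2(D,E)\to H^2(D,V)$, $\overline{B}\mapsto\overline{\Delta_1(B)}$, well-defined and bijective (its inverse is induced by $\Delta_2$). Finally, since $[E:D]$ is by definition the cardinal of the isomorphism classes of $D$ dendriform complements of $E$, i.e. $|CH^2(D,E)|$, and $\Delta$ is a bijection, $[E:D]=|CH^2(D,E)|=|H^2(D,V)|$.

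The only mild subtlety — and the one place where care is needed rather than raw computation — is the passage from "a bijection $\Delta_1$ intertwining $\cong$ and $\sim$" to "an induced bijection on quotients": one must verify that $\Delta$ is independent of the choice of representative $B$ and that it is genuinely injective (not just surjective), which follows because $\Delta_1(B)\sim\Delta_1(B')\iff B\cong B'$ is an \emph{iff}, not merely an implication. So the main obstacle is essentially bookkeeping about equivalence relations and quotient maps; everything substantive (the bijectivity of $\Delta_1$, the isomorphism $B\cong V_d$, and the characterization of $\sim$) has already been established in Propositions~\ref{prop:bijection complement to deformation} and~\ref{proposition:equivalent deformation} and Theorem~\ref{thm:complement to deformation}.
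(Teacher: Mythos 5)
Your proposal is correct and follows essentially the same route as the paper: both rest on the bijection $\Delta_{1}$ from Proposition~\ref{prop:bijection complement to deformation}, the isomorphism $B\cong V_{d}$ from Theorem~\ref{thm:complement to deformation}, and the characterization $d\sim d'\iff V_{d}\cong V_{d'}$ from Proposition~\ref{proposition:equivalent deformation}, chaining these into $d\sim d'\iff B\cong B'$ and descending to a bijection on quotients with $[E:D]=|H^{2}(D,V)|$. Your treatment of the quotient-map bookkeeping is a bit more explicit than the paper's, but the substance is identical.
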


\allowdisplaybreaks{
\begin{proof}
According to Proposition ~\ref{prop:bijection complement to deformation}, there exists a bijective map
\begin{align*}
\Delta_{1}:\mathcal{C}(D,E)&\rightarrow \mathcal{D}(D,V)\\
B\,\,\,\,\,\,&\mapsto \,\,\,\,\,\,d\\
B^{\prime}\,\,\,\,\,\,&\mapsto \,\,\,\,\,\,d^{\prime}.
\end{align*}

By Theorem ~\ref{thm:complement to deformation}, we have $V_{d}\cong B,V_{d^{\prime}}\cong B^{\prime}$, then $V_{d}\cong V_{d^{\prime}}$ if and only if $B\cong B^{\prime}$. By Proposition ~\ref{proposition:equivalent deformation}, the relation $d\sim d^{\prime}$ if and only if $V_{d}\cong V_{d^{\prime}}$, so $d\sim d^{\prime}$ if and only if $B\cong B^{\prime}$. Hence $\sim$ is an equivalence relation on the set $\mathcal{D}(D,V)$, and the map $\Delta_{1}$ induces a bijection $\Delta:CH^{2}(D,E)\rightarrow H^{2}(D,V)$, and $[E:D]=|H^{2}(D,V)|$.
\end{proof}
}

Now we give an example of classifying complements for dendriform algebras.

\begin{exam}\label{exam:deformation map}

To continue Example ~\ref{exam:ES problem}, suppose that $d(e_{2})=\bar{d}e_{1}\,(\bar{d} \in {\bf k})$, $\delta(e_{2})=\bar{\delta}e_{2}\,(\bar{\delta} \neq 0\in {\bf k})$ and $x=y=e_{2}$. For each flag datum of $D$, we obtain an extension $\cee$ of $D$ by Eq. ~\eqref{formulas:flag to extension}. Computing Eqs. ~\eqref{formulas:deformation in defn1}-\eqref{formulas:deformation in defn2} by Eqs.~\eqref{formulas:flag and extending structure} and \eqref{formulas:flag and D}, we obtain all the deformation maps, refer to the third column of Table ~\ref{table:deformation maps and complements}. Computing Eqs.~\eqref{formulas:equivalent deformation1}-\eqref{formulas:equivalent deformation2} by Eqs.~\eqref{formulas:flag and extending structure} and \eqref{formulas:flag and D}, we obtain the equivalence relations of different deformation maps and the results refer to the fourth column of Table ~\ref{table:deformation maps and complements}.
\begin{table}[htbp]
  \centering
  \renewcommand\arraystretch{1.7}
\begin{tabular}{|c|c|c|c|c|c|c|}
 \hline
\multirow{2}*{case} & \multirow{2}*{ $(\bar{l}_{1},\bar{l}_{2},\bar{r}_{1},\bar{r}_{2},\bar{p}_{1},\bar{p}_{2},\bar{q}_{1},\bar{q}_{2},\bar{a}_{1},\bar{a}_{2},\bar{k}_{1},\bar{k}_{2})$} &\multirow{2}*{$\bar{d}$}  &\multirow{2}*{$\bar{\delta}$}   &equivalent &\multirow{2}*{[E:D]}\\
~&~ &~ &~   &class &\\
  \hline
 1 & $(1,-1,0,0,\bar{p}_{1},0,0,\bar{p}_{1},0,\bar{p}^{2}_{1},\bar{p}_{1},-\bar{p}_{1})$ & $-\bar{p}_{1}$   &\diagbox{}{}& $-\bar{p}_{1}$  &1\\

    \hline
 \multirow{2}*{2} &   $(1,-1,0,0,\bar{p}_{1},0,0,\bar{p}_{1},0,\bar{p}^{2}_{1}-\bar{k}_{2}\bar{p}_{1},$ & $-\bar{p}_{1}$    &\multirow{2}*{\diagbox{}{}} & $-\bar{p}_{1}$ & \multirow{2}*{2}  \\
  \cline{3-3} \cline{5-5}
     ~ & $\bar{p}_{1},\bar{k}_{2}-\bar{p}_{1})$, $\bar{k}_{2}\neq 0$ & $\bar{k}_{2}-\bar{p}_{1}$    & & $\bar{k}_{2}-\bar{p}_{1}$    & \\
      \hline

 \multirow{2}*{3} & \multirow{2}*{$(0,0,1,0,0,0,\bar{q}_{1},0,0,0,\bar{q}_{1},0)$} & $-\bar{q}_{1}$    &\diagbox{}{} & $-\bar{q}_{1}$  &\multirow{2}*{2} \\
\cline{3-5}
~ &~ & $-\bar{q}_{1}\neq \bar{d}\in {\bf k}$   &$\bar{d}+\bar{q}_{1}$ & $1-\bar{q}_{1}$   &  \\
  \hline
 4 & $(0,0,0,1,\bar{p}_{1},-\bar{p}_{1},\bar{p}_{1},0,\bar{p}^{2}_{1},-\bar{p}^{2}_{1},0,\bar{p}_{1})$ & $-\bar{p}_{1}$    &\diagbox{}{} & $-\bar{p}_{1}$  &1\\
\hline
\multirow{2}*{5} & \multirow{2}*{$(1,0,0,0,\bar{p}_{1},0,0,0,0,0,\bar{p}_{1},0)$} & $-\bar{p}_{1}$   &\diagbox{}{} & $-\bar{p}_{1}$  &\multirow{2}*{2}\\
\cline{3-5}
~ & ~ & $-\bar{p}_{1}\neq \bar{d}\in {\bf k}$   &$\bar{d}+\bar{p}_{1}$ & $1-\bar{p}_{1}$  &\\
\hline
 6 & $(1,0,1,0,0,0,0,0,-\frac{1}{4}\bar{k}^{2}_{1},0,\bar{k}_{1},0)$ & -$\frac{\bar{k}_{1}}{2}$   &\diagbox{}{} &-$\frac{\bar{k}_{1}}{2}$  &1\\
  \hline
   \multirow{2}*{7}  &  $(1,0,1,0,0,0,0,0,\bar{a}_{1}-\frac{1}{4}\bar{k}^{2}_{1},0$, & $-\frac{\bar{k}_{1}}{2}+ \sqrt{\bar{a}_{1}}$   &\multirow{2}*{-1} &  \multirow{2}*{$-\frac{\bar{k}_{1}}{2}+ \sqrt{\bar{a}_{1}}$}   & \multirow{2}*{1} \\
  \cline{3-3}
   ~ &   $\bar{k}_{1},0)$, $\bar{a}_{1}\neq 0$  & $-\frac{\bar{k}_{1}}{2}-\sqrt{\bar{a}_{1}}$   & &  & \\
\hline
 8 & $(1,0,0,1,\bar{p}_{1},-\bar{p}_{1},0,0,0,-\bar{p}_{1}^{2},\bar{p}_{1},\bar{p}_{1})$ & $-\bar{p}_{1}$   &\diagbox{}{} & $-\bar{p}_{1}$ &1\\

  \hline
   \multirow{2}*{9}  &  $(1,0,0,1,\bar{p}_{1},-\bar{p}_{1},0,0,0,-\bar{p}_{1}\bar{k}_{2}-\bar{p}_{1}^{2}$,& $-\bar{p}_{1}$   &\multirow{2}*{\diagbox{}{}} ~&  $-\bar{p}_{1}$   & \multirow{2}*{2} \\
  \cline{3-3} \cline{5-5}
      ~ & $\bar{p}_{1},\bar{k}_{2}+\bar{p}_{1})$, $\bar{k}_{2}\neq 0$  & $-\bar{k}_{2}-\bar{p}_{1}$   &~ & $-\bar{k}_{2}-\bar{p}_{1}$   & \\

  \hline
   10   &  \makecell{ $(1,0,0,1,\bar{p}_{1},-\bar{p}_{1},0,0,-\bar{p}_{1}\bar{k}_{1},-\bar{p}_{1}^{2}$,\\$\bar{k}_{1}+\bar{p}_{1},\bar{p}_{1})$, $\bar{k}_{1}\neq 0$} & $-\bar{p}_{1}$   &\diagbox{}{}  & $-\bar{p}_{1}$ & 1\\
  \hline

 \multirow{2}*{11} & \multirow{2}*{$(1,-1,0,1,\bar{p}_{1},-\bar{p}_{1},0,\bar{p}_{1},0,0,\bar{p}_{1},0)$} & $-\bar{p}_{1}$  &\diagbox{}{} & $-\bar{p}_{1}$  &\multirow{2}*{2}\\
  \cline{3-5}
 ~ & ~ & $-\bar{p}_{1}\neq \bar{d}\in {\bf k}$   &$\bar{d}+\bar{p}_{1}$ & $1-\bar{p}_{1}$ & \\
\hline
  12 &  $(0,0,0,0,\bar{q}_{1},0,\bar{q}_{1},0,\bar{q}^{2}_{1},0,0,0)$ & $-\bar{q}_{1}$   &\diagbox{}{}  & $-\bar{q}_{1}$  &1\\

\hline
   \multirow{2}*{13}  &    $(0,0,0,0,\bar{q}_{1},0,\bar{q}_{1},0,\bar{q}_{1}(\bar{q}_{1}-\bar{k}_{1}),0$,  & $-\bar{q}_{1}$    &\multirow{2}*{1}  & \multirow{2}*{$-\bar{q}_{1}$} & \multirow{2}*{1} \\
  \cline{3-3}
    ~ &  $\bar{k}_{1},0)$, $\bar{k}_{1}\neq 0$ & $\bar{k}_{1}-\bar{q}_{1}$   &~ & ~  & \\

  \hline
   14 &    \makecell{ $(0,0,0,0,\bar{q}_{1}+\bar{k}_{1},0,\bar{q}_{1},\bar{k}_{1},\bar{q}^{2}_{1},\bar{q}_{1}\bar{k}_{1}$,\\$\bar{k}_{1},0)$, $\bar{k}_{1}\neq 0$} & $-\bar{q}_{1}$   &\diagbox{}{}  & $-\bar{q}_{1}$ & 1\\

   \hline
 15  &  \makecell{ $(0,0,0,0,\bar{q}_{1},0,\bar{q}_{1},0,\bar{q}^{2}_{1},-\bar{q}_{1}\bar{k}_{2}$,\\$0,\bar{k}_{2})$, $\bar{k}_{2}\neq 0$} & $-\bar{q}_{1}$   &\diagbox{}{}  & $-\bar{q}_{1}$  & 1\\

    \hline
   \multirow{2}*{16}  &    $(0,0,0,0,\bar{q}_{1}+\bar{k}_{2},0,\bar{q}_{1},\bar{k}_{2},\bar{q}^{2}_{1}+\bar{q}_{1}\bar{k}_{2},0$, & $-\bar{q}_{1}$   &\multirow{2}*{1}  & \multirow{2}*{$-\bar{q}_{1}$} &  \multirow{2}*{1} \\
  \cline{3-3}
    ~ &  $0,\bar{k}_{2})$, $\bar{k}_{2}\neq 0$ & $-\bar{q}_{1}-\bar{k}_{2}$   & & & \\
    \hline
\end{tabular}
\\[5pt]
  \caption{deformation maps and complements}\label{table:deformation maps and complements}
\end{table}
Now we classify all the deformation maps, the results refer to the fifth and sixth columns of Table ~\ref{table:deformation maps and complements}.
For each deformation map $d$ and by Proposition ~\ref{prop:bijection complement to deformation}, we obtain a $D$ dendriform complement $B$ of $\cee$ as follows:
 $B=\mathrm{ker}\, \xi=\{d(x)+x|x\in V\}={\bf k}\{e_{2}+\bar{d}e_{1}\}$.
\end{exam}

\section{Further questions}
In this section we mainly give two problems that may be worthy to study in the sequel.
\smallskip

 From the case 3 of Table ~\ref{table:deformation maps and complements}, for each flag datum (i.e., for the different data $\bar{q_1}$) and by Eq.~(\ref{formulas:flag to extension}), we obtain an extension $\cee$.
 Moreover, we find all space complements of $D$ in $E$ are dendriform complements of $D$ in $E$. But in case 1 of Table ~\ref{table:deformation maps and complements}, for each flag datum and by Eq.~(\ref{formulas:flag to extension}), we obtain an extension $\cee$,  and there exist only one dendriform complement of $D$ in $E$. Hence, not all  space complements of $D$ in $E$ are dendriform complements of $D$ in $E$. For this phenomenon, we propose the following definition.
\begin{defn}
Let $D\subset E$ be an extension of dendriform algebras. The extension is called {\bf good} if each space complement of $D$ in $E$ is a dendriform complement of $D$ in $E$.
\end{defn}

The first problem can be expressed as follows.
\smallskip

{\bf The Good Extending Structure Problem:} Let $D$ be a dendriform algebra, $E$ a vector space containing $D$ as a subspace. Describe and classify all good extensions $\cee$ of $D$.  

\smallskip
From the results of Example ~\ref{exam:deformation map}, we find that for each flag datum and by Eq.~(\ref{formulas:flag to extension}), there exsits an extension $\cee$ and at least one $D$ dendriform complement in $E$, where $D$ is a dendriform algebra defined in Example~\ref{exam:dendriform algebra}~\ref{exam:dendriform algebra1}. It's natural to ask whether this phenomenon is suit for other cases. It is not easy to answer the question, but it may be worthy to consider in the sequel.

\smallskip
The second problem can be expressed as follows.

Let $D\subset E$ be an extension of dendriform algebras, does the dendriform complement of $D$ in $E$ always exist?

\smallskip
\noindent
{{\bf Acknowledgments.}
This work is supported in part by Natural Science Foundation of China (No. 12101183).
The authors are thankful to Yi Zhang for helpful suggestions.
}

\noindent
{\bf Declaration of interests.} The authors have no conflicts of interest to disclose.

\smallskip

\noindent
{\bf Data availability.} Data sharing is not applicable as no new data were created or analyzed.

\medskip

\newpage

\end{document}